\documentclass[12pt,reqno]{amsart}
\usepackage{amsmath,amsthm}
\usepackage{caption}
\usepackage{mathtools}
\usepackage{cite}
\usepackage{enumitem}
\usepackage{color}
\usepackage{url}
\usepackage[dvipsnames]{xcolor}
\usepackage{graphicx}
\usepackage{xspace}
\usepackage[unicode=true,colorlinks]{hyperref}
\hypersetup{
	linkcolor=blue,
	citecolor=blue,
}
\usepackage{cleveref}
\usepackage{leftidx}
\usepackage[margin=1.2in]{geometry}
\usepackage{mathrsfs}

\newcommand{\Sp}{\operatorname{Sp}}

\newcommand{\GL}{\operatorname{GL}}
\newcommand{\SL}{\operatorname{SL}}

\newcommand{\Stab}{\operatorname{Stab}}

\newcommand{\Gr}{\operatorname{Gr}}
\usepackage{amssymb}

\newcommand{\defeq}{\mathrel{\mathop:}=}

\makeatletter
\renewcommand{\paragraph}{%
	\@startsection {paragraph}{4}
	{\z@} \z@ {-\fontdimen 2\font }\bfseries
}
\makeatother

\makeatletter 
\def\@cite#1#2{{\m@th\upshape\bfseries%
		[{#1\if@tempswa{\m@th\upshape\mdseries, #2}\fi}]}}
\makeatother 

\numberwithin{equation}{section}

\theoremstyle{plain}
\newtheorem{thm}{Theorem}[section]
\newtheorem{cor}[thm]{Corollary}
\newtheorem{prop}[thm]{Proposition}
\newtheorem{lem}[thm]{Lemma}

\theoremstyle{definition}
\newtheorem{definition}{Definition}[section]

\theoremstyle{remark}
\newtheorem{remark}{Remark}[section]

\newcommand{\ann}[1]{}

\newcommand{\DI}{\mathrm{DI}}

\newcommand\bfG{\mathbf{G}}
\newcommand\bfH{\mathbf{H}}

\newcommand\bfM{\mathbf{M}}

\newcommand\bfP{\mathbf{P}}
\newcommand\bfQ{\mathbf{Q}}

\newcommand\bfV{\mathbf{V}}
\newcommand\bfW{\mathbf{W}}

\newcommand\bfp{\mathbf{p}}
\newcommand\bfq{\mathbf{q}}

\newcommand\bfx{\mathbf{x}}
\newcommand\bbA{\mathbb{A}}

\newcommand\bbF{\mathbb{F}}

\newcommand\bbP{\mathbb{P}}
\newcommand\bbQ{\mathbb{Q}}
\newcommand\bbR{\mathbb{R}}

\newcommand\bbZ{\mathbb{Z}}
\newcommand\cA{\mathcal{A}}

\newcommand\cC{\mathcal{C}}
\newcommand\cD{\mathcal{D}}

\newcommand\cK{\mathcal{K}}
\newcommand\cL{\mathcal{L}}

\newcommand\cN{\mathcal{N}}
\newcommand\cO{\mathcal{O}}
\newcommand\cP{\mathcal{P}}

\newcommand\cS{\mathcal{S}}
\newcommand\cT{\mathcal{T}}

\newcommand\fg{\mathfrak{g}}

\newcommand\fp{\mathfrak{p}}
\newcommand\fq{\mathfrak{q}}
\newcommand\fs{\mathfrak{s}}
\newcommand{\fS}{\mathfrak{S}}
\newcommand\fz{\mathfrak{z}}

\newcommand{\R}{\mathbb{R}}
\newcommand{\C}{\mathbb{C}}
\newcommand{\Q}{\mathbb{Q}}
\newcommand{\Z}{\mathbb{Z}}
\newcommand{\N}{\mathbb{N}}

\DeclarePairedDelimiter\abs{\lvert}{\rvert}%
\DeclarePairedDelimiter\norm{\lVert}{\rVert}%

\makeatletter
\let\oldabs\abs
\def\abs{\@ifstar{\oldabs}{\oldabs*}}
\let\oldnorm\norm
\def\norm{\@ifstar{\oldnorm}{\oldnorm*}}
\makeatother

\newcommand{\eps}{\varepsilon}
\newcommand{\de}{\mathrm{d}}

\newcommand{\at}{\widetilde{\alpha}}

\newcommand{\pifix}{\pi_{\mathrm{fix}}}
\newcommand{\btt}{\widetilde{\beta}_\Omega}
\newcommand{\bN}{\beta_\cN}

\renewcommand{\Re}{\operatorname{Re}}
\renewcommand{\Im}{\operatorname{Im}}

\DeclareMathOperator{\diag}{diag}

\DeclareMathOperator{\Lie}{Lie}

\DeclareMathOperator{\Mat}{Mat}

\DeclareMathOperator{\Ad}{Ad}
\DeclareMathOperator{\Res}{Res}

\title{Birkhoff genericity on affine subspaces in horospheres}
\author[Shah]{Nimish~A.~Shah}
\address{The Ohio State University, Columbus, OH 43210}
\email{shah@math.osu.edu}
\author[Yang]{Pengyu~Yang}
\address{Morningside Center of Mathematics, Academy of Mathematics and Systems Science, Chinese Academy of Sciences, Beijing 100190}
\email{yangpengyu@amss.ac.cn}

\thanks{Pengyu Yang is supported by National Key R\&D Program of China 2022YFA1007500 and NSFC grant 22AAA00245.}

\begin{document}

\begin{abstract}
We study Birkhoff genericity for a simple uniformly expanding diagonal flow on
\(\mathrm{SL}_{n+1}(\mathbb R)/\mathrm{SL}_{n+1}(\mathbb Z)\), with initial points
restricted to affine subspaces of the expanding horospherical orbit through the identity coset. We prove that almost every point on such an affine subspace
is Birkhoff generic, except possibly in two situations: either the
defining matrix of the affine subspace has Diophantine exponent at least \(n\), or the affine subspace
is arbitrarily well approximable by affine subspaces of dimension \(r-1\)
defined over a real number field of degree \(m\ge 2\), with \(n+1=mr\).
As applications, we obtain Dirichlet non-improvability and logarithmic density
results for almost every point on these affine subspaces.
\end{abstract}

	\subjclass[2020]{Primary 37A17, 11J83; Secondary 22E46, 14L24, 11J13}
	\keywords{Homogeneous dynamics, Diophantine approximation}
	\maketitle
	\tableofcontents
	\section{Introduction}

Let $X=G/\Gamma$ be a finite-volume homogeneous space and let
$\{a_t\}_{t\in\R}$ be a one-parameter diagonal subgroup of $G$.
A point $x\in X$ is said to be Birkhoff generic for the $\{a_t\}$-action with respect to the
$G$-invariant probability measure $\mu_X$ if
\[
\lim_{T\to\infty}\frac{1}{T}\int_0^T f(a_t x)\,\de t
=
\int_X f\,\de\mu_X,\qquad \forall f\in C_c(X).
\]

Suppose that the $a_t$-action is ergodic with respect to $\mu_X$. By the Birkhoff ergodic theorem, $\mu_X$-almost every point is Birkhoff generic. Let $P_a^-=\{g\in G:\lim_{t\to\infty} a_tga_{-t} \text{ exists in } G\}$ denote the stable subgroup for $\{a_t\}$. Then if $x$ is generic then $gx$ is also generic for any $g\in P_a^-$. Let $U_a^+\defeq \{g\in G: \lim_{t\to\infty} a_{-t}ga_t\to e\}$ denote the expanding horospherical subgroup for $\{a_t\}$. Then $P_a^-U_a^+$ is open in $G$. Therefore, for any $x\in X$, we have that $gx$ is generic for almost every $g\in U_a^+$ with respect to the Haar measure on $U_a^+$. We are interested in finding out when  the same conclusion holds for other measures on $U_a^+$, for example smooth measures on proper submanifolds of $U_a^+$. 

The above question was motivated by another class of similar results, where one shows that translates of certain smooth measures on a submanifold of a horosphere by a diagonal action get equidistributed in the limit \cite{KM98, Shah:Dirichlet,Yan20Invent}. These type of results are like mixing of piece of a low-dimensional submanifold under translates. While in this article we are interested in ergodic averages along the diagonal orbit from various points of the submanifold.   

Shi~\cite{Shi20} carefully studied this problem for subgroup action of semisimple groups $H$ on $G/\Gamma$, and established Birkhoff genericity for points on orbits of horospheres of $H$ for actions of $\R$-diagonalizable subgroups. For points on horospheres in the case of actions on the space of unimodular lattices, Kleinbock, Shi, and Weiss \cite{KSW17} have also obtained precise error terms for ergodic averages. See also \cite{FSU18, Kha20, Zha21} for progress on different cases of this problem.

More recently, a strong result was proved by Solan and Wieser \cite{SolanWeiser25} who established Birkhoff genericity for non-degenerate curves on horospheres of simple uniformly expanding diagonal group actions on the space of unimodular lattices. 

In this paper we consider the Birkhoff genericity question for points on affine subspaces of the expanding horospherical subgroup. Our work complements the results in \cite{Shi20, KSW17,EinsiedlerShi19, SolanWeiser25} and we refer the readers to the references therein for other applications, history and background in a more general context. 

	\subsection{Main results}
	Let $G=\SL_{n+1}(\R)$, $\Gamma=\SL_{n+1}(\Z)$, $X=G/\Gamma$, the space of unimodular lattices in $\R^{n+1}$, $\mu_X$ denote the $G$-invariant probability measure on $X$, and $x_0=e\Gamma/\Gamma\in X$. Let 
    \[
    t\mapsto a_t=\begin{psmallmatrix}
	e^{\frac{n}{n+1}t} & \\
	& e^{-\frac{1}{n+1}t}I_n
	\end{psmallmatrix}
    \]
    denote the uniformly expanding simple diagonal one-parameter subgroup. And let 
    \[
    U_a^+=\left\{\begin{psmallmatrix}
	1 & x_1 & \cdots & x_n \\
	& 1 &        &   \\
	&   & \ddots &   \\
	&   &        & 1 \\
	\end{psmallmatrix}:(x_1,\ldots,x_n)\in\R^n\right\},
    \]
    denote the expanding horospherical subgroup of $G$ with respect to $a_1$. From now on we will identify $U^+_a$ with $\R^n$. Let $e_1,\dots,e_n$ denote the basis of $U_a^+\cong\R^n$. Let $\mathcal{A}$ be a (proper) affine subspace of $U_a^+$ of dimension $d<n$. To parametrize $\mathcal{A}$, by permuting the coordinates, without loss of generality we may assume that the projection of $\mathcal{A}$ to the coordinate plane spanned by $e_1,\dots,e_d$ is bijective. Then, there exists a unique matrix $A\in\Mat_{d+1,n-d}(\R)$ such that 
    \begin{equation}
    \label{eq:LsubA}
    \cA=\cL_A\defeq\{(\bfx,A\widetilde{\bfx})\colon \bfx\in\R^d\}\text{, where $\widetilde{\bfx}=(1,\bfx)$ for all $\bfx\in\R^d$.}
    \end{equation}
    Here $A$ is called the matrix defining $\cA$. Let 
    \begin{equation} \label{eq:uA}
        u_A=\begin{psmallmatrix}
	I_{d+1} & A \\
	& I_{n-d} \\
	\end{psmallmatrix} \text{ and } x_A=u_Ax_0.
    \end{equation} 
    
	For $A\in\Mat_{d+1,n-d}(\R)$, we define the Diophantine exponent $\omega(A)$ of $A$ to be the supremum of $\omega>0$ such that the inequality $\norm{A\bfq+\bfp}\leq \norm{\bfq}^{-\omega}$ has infinitely many nonzero integral solutions $(\bfq,\bfp)\in\Z^{n-d}\times\Z^{d+1}$. By Dirichlet's approximation theorem, $\omega(A)\geq \frac{n-d}{d+1}$ for every $A$, and  the equality holds for almost every $A$. 
	
	We shall prove the following point-wise non-escape of mass result.
	\begin{thm} \label{thm:main_nondivergence}
		Suppose $\omega(A)<n$. Then for almost every $u\in\mathcal{A}$, every weak-$\ast$ limit point of $\{\frac{1}{T}\int_{0}^{T}a_tu\delta_{x_0}\de t\}_{T\to\infty}$ is a probability measure on $X$.
	\end{thm}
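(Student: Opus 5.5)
We establish the pointwise non-escape of mass through a Margulis-function argument. The output is an integrable function on $\mathcal{A}$ that controls, for almost every $u$, the time the orbit $a_tux_0$ spends near the cusp.

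\textbf{Height functions.} For $0\le i\le n+1$ and a lattice $g\Gamma$, let $\alpha_i(g\Gamma)=\max\{\norm{v}^{-1}\}$, the maximum taken over primitive $v\in\bigwedge^i g\Z^{n+1}$. Following Eskin--Margulis--Mozes and the approach of \cite{SolanWeiser25,Shi20}, we form $f_\beta=\sum_i \eps^{c_i}\alpha_i^{\beta}$ for a small exponent $\beta>0$. The goal is a \emph{contraction inequality on average over the affine subspace}: there exist $t_0>0$, $c<1$ and $b>0$ such that for every $x=ux_0$ with $u\in \mathcal{A}$,
\[
\int_{B} f_\beta(a_{t_0}v u x_0)\,\de v\le c f_\beta(ux_0)+b.
\]
Here $B$ is a unit ball in the direction space of $\mathcal{A}$, translated and rescaled by $a_{t_0}$-conjugation so that it remains inside $\mathcal{A}$.

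\textbf{Linear algebra behind the inequality.} For a fixed primitive vector $w\in\bigwedge^i\R^{n+1}$ we need $\int_B\norm{a_{t_0}vw}^{-\beta}\,\de v\le c\norm{w}^{-\beta}$ whenever $w$ is not in an exceptional set. The exceptional $w$ are those for which the $\mathcal{A}$-directions fail to expand the highest-weight component of $w$. These $w$ satisfy an algebraic condition: $w$ lies in a proper subrepresentation invariant under the group generated by $a_t$ and the unipotents in $\mathcal{A}$'s direction. For the integral lattice $u_A\Z^{n+1}$, this translates into rational subspaces $W$ with $u_AW$ in a degenerate position, which amounts to $A$ having good rational approximations. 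Iterating the inequality multiscale along $\mathcal{A}$, each exceptional configuration at time $t$ produces an integer solution to $\norm{A\bfq+\bfp}\le\norm{\bfq}^{-\omega}$ with $\omega$ close to $n$. Here $n$ is the exponent at which the contraction of $a_t$ on $\bigwedge^i$ (rate $e^{-t/(n+1)}$ per dimension, against the expansion $e^{nt/(n+1)}$) can no longer be compensated. So $\omega(A)<n$ rules out the exceptional configurations beyond a finite time.

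\textbf{Deducing non-escape.} With the averaged contraction inequality holding for all $t\ge T_0(A)$, a standard iteration together with a Borel--Cantelli/maximal-inequality argument (in the style of \cite{SolanWeiser25}, or through a submartingale along the dyadic filtration of $\mathcal{A}$ with conditional expectations given by averages over shrinking boxes) gives the following: for almost every $u\in\mathcal{A}$,
\[
\limsup_{T\to\infty}\frac1T\int_0^T f_\beta(a_tux_0)\,\de t<\infty.
\]
Since $f_\beta$ is proper on $X$, Chebyshev then shows that every weak-$\ast$ limit gives mass at most $C/M$ to $\{f_\beta>M\}$, so it is a probability measure.

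\textbf{Main obstacle.} The hardest step is the exceptional-set analysis. We must show that an infinitely recurring failure of contraction for $\alpha_i$, simultaneously for a positive-measure set of $u\in\mathcal{A}$, forces $\omega(A)\ge n$. This requires a careful description of which rational $i$-dimensional subspaces are nearly $\mathcal{A}$-invariant under $a_t$, using the dual pairing between $\bfq$ and the $\Z^{d+1}$ part. I would first try the cases $i=1$ and $i=n$, which correspond directly to $A$ and its transpose, and then handle intermediate $i$ by convexity of $i\mapsto\log\alpha_i$.
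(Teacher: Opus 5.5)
There is a genuine gap, and it sits exactly at the step you call the main obstacle. The averaged contraction inequality for the standard height $f_\beta$ cannot hold at the orbit points $a_tux_0$ for all $t\ge T_0(A)$, whatever $A$ is. These are the points where you need it, and they are not of the form $u'x_0$ with $u'\in\mathcal{A}$.

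To see the failure, note that the directions of $\mathcal{A}$ fix pointwise the $(n-d)$-dimensional subspace $z_AV_0$, where $V_0=\operatorname{span}(e_{d+1},\dots,e_n)$, and $a_t$ contracts this subspace by $e^{-t/(n+1)}$. Suppose a lattice $y$ has one very short vector $w$ within relative distance $e^{-2t_0}$ of $z_AV_0$, and no other short primitive subgroups. Then $\norm{a_{t_0}vw}\le 2e^{-t_0/(n+1)}\norm{w}$ for every $v$ in your box. Hence, for $t_0$ large, $\int_B f_\beta(a_{t_0}vy)\,\de v\ge f_\beta(y)$, and your inequality fails as soon as $f_\beta(y)>b/(1-c)$. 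Such $y$ fill open sets of positive $\mu_X$-measure in every neighbourhood of the cusp.

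At $y=a_tu_{\bfx}x_0$ this configuration is an integer vector $(p_0,\mathbf m)$, with $\mathbf m$ in a fixed cone around $z_AV_0$, satisfying $\abs{p_0+\bfx\cdot\mathbf m}\lesssim\norm{\mathbf m}^{-n}$. That is a Dirichlet-quality approximation of the \emph{individual point} $\bfx\in\mathcal{A}$, not of the matrix $A$. Typical points have infinitely many such approximations. Indeed, any Birkhoff generic orbit visits these regions with positive frequency at arbitrarily large heights. The paper shows almost every orbit from $\mathcal{A}$ is generic when, say, $n+1$ is prime and $\omega(A)<n$.

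So the claim that ``$\omega(A)<n$ rules out the exceptional configurations beyond a finite time'' is false, and the statement in your last paragraph is not what you need. Only when one and the same integer vector is bad for a positive proportion of $u$ at once do you get $\norm{A\bfq+\bfp}\lesssim\norm{\bfq}^{-n}$. This is where the exponent $n$ genuinely comes from (cf.\ escape of mass when $\omega(A)>n$). A pointwise Margulis-function iteration, however, must survive the generic, $u$-dependent failures as well.

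The missing idea is to split off the part of $a_t$ that the averaging cannot see. After conjugating by $z_A\in Z_G(\{a_t\})$, one studies $a_tu(s)x_A$ with $x_A=u_Ax_0$ and $u(s)\in U\subset H_d\cong\SL_{d+1}(\R)$. One then writes $a_t=b_tc_t$, with $c_t\in H_d$ and $b_t$ centralizing $H_d$.

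A Benoist--Quint height $\alpha_\eps^\theta$ adapted to $H_d$ sees integral vectors with small $H_d$-fixed part only through their non-fixed components. For $\theta<\frac{d}{d+1}$ it satisfies $\int_{I^d}\alpha_\eps^\theta(c_tu(s)y)\,\de s\le Ce^{-\theta t}\alpha_\eps^\theta(y)+b$ at \emph{every} $y\in X$. The uncontrolled $b_t$-direction is then absorbed by $\at(y)=\int_{t_0}^\infty e^{-\delta t}\alpha_\eps^\theta(b_ty)\,\de t$. This function is proper and satisfies a genuine contraction inequality for $a_t$ averaged over $u(I^d)$, at every point.

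The hypothesis $\omega(A)<n$ is used only once, through \cite[Proposition~4.1]{SY24TAMS}. It makes the exponential growth rate $\rho_{\eps,\theta}(x_A)$ of $\alpha_\eps^\theta$ along the single orbit $b_tx_A$ (the Dani-correspondence orbit of $A$) smaller than $\theta$. Consequently $\at(x_A)<\infty$ for any $\delta$ with $\rho_{\eps,\theta}(x_A)<\delta<\theta$. Thus the Diophantine threshold is a comparison of rates: deep cusp excursions are tolerated as long as they grow more slowly than $e^{\theta t}$ along $b_tx_A$.

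With $\at$ in hand, Shi's argument gives $\abs{\{s\in I^d:\mathcal{C}^T_K(s)\le 1-\epsilon\}}\le Ca^T$, and Borel--Cantelli finishes the proof. Your final Chebyshev/properness step would also work once such a function is available.
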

	
	\begin{remark}
		We note that the condition on the Diophantine exponent is optimal in the following sense. Suppose $\omega(A)>n$. Then by the argument in the proof of \cite[Lemma~6.5]{SY24PLMS}, see also \cite[Theorem~1.1]{KSSY}, escape of mass occurs for every $u\in\mathcal{A}$. More precisely, there exists $\eps_0>0$ and $T_i\to\infty$ such that  for every compact subset $K$ of $X$ and every $u\in\mathcal{A}$, the Lebesgue measure of the set $\{0\leq t\leq T_i\colon a_tux_0\in K\}$ is at most $(1-\eps_0)T_i$ for all $i\gg1$. 
        
        In the critical case of $\omega(A)=n$, we are not able to draw any conclusions. 
	\end{remark}

    In this article, a real number field is defined to be a subfield of $\R$ which is a finite extension of $\Q$; it is not necessarily totally real. Let $\Gr(k_1,k_2)$ denote the Grassmannian variety parametrizing $k_1$-dimensional linear subspaces of a $k_2$-dimensional vector space.
    
    \begin{definition}[$\bbF$-Liouville subspaces]  \label{def:F-Liouville}
        Let $\bbF$ be a real number field and $k \leq l < n$ are positive integers. We say a $k$-dimensional subspace $\cS$ of $\bbP^n(\R)$ is $(\bbF, l)$-\emph{Liouville} if for every $\kappa\geq 1$, there exists an $l$-dimensional subspace $\cT$ of $\bbP^n(\R)$ which is defined over $\bbF$, such that
        \begin{equation}
            d(\cS, \cT)\leq H_{\cL}(\cT)^{-\kappa}.
        \end{equation}
        Here $d(\cS, \cT):=\max_{x\in\cS}\min_{y\in\cT}d(x,y)$, where $d(x,y)$ is the angle between the lines in $\R^{n+1}$ that $x$ and $y$ parametrize.
        The height function $H_\cL$ is an exponential Weil height function $H_\cL:\Gr(l+1,n+1)(\bbF)\to\R_{>0}$ associated with the ample generator $[\cL]$ of the Picard group $\operatorname{Pic}(\Gr(l+1,n+1))\cong\Z$. Note that the height function is determined by $[\cL]$ up to multiplication by a constant, but changing the constants does not affect our definition of Liouville-ness. 
        
        We say an affine subspace of $\bbA^n$ is $(\bbF,l)$-Liouville if its projective closure in $\bbP^n$ is.
    \end{definition}

    \begin{remark} \label{rem:H-dim-F-Liouville}
        Let $\cL_{k;\bbF,l}$ denote the set of $(\bbF,l)$-Liouville subspaces in $\Gr(k+1,n+1)$. Then \Cref{def:F-Liouville} gives natural coverings of $\cL_{k;\bbF,l}$ by thin neighborhoods of $\bbF$-Schubert cycles in $\Gr(k+1,n+1)$ defined by the condition $\cS\subset\cT$. These Schubert cycles are of dimension $(k+1)(l-k)$. From this one can show that the Hausdorff dimension of $\cL_{k;\bbF,l}$ is $(k+1)(l-k)$, which is strictly smaller than the dimension of $\Gr(k+1,n+1)$. Since this fact is not used in this article, we leave it to the reader to verify the details.
    \end{remark}
    
	The following is the main result of this article.
	
	\begin{thm}  \label{thm:main_equidistribution}
		Suppose $\omega(A)<n$. Then one of the following possibilities holds:
		\begin{enumerate}
			\item \label{item:equidistribution} For almost every $u\in\mathcal{A}$, we have
			\begin{equation*}
			\lim_{T\to\infty}\frac{1}{T}\int_{0}^{T}f(a_tux_0)\de t =
			\int_{X} f \de\mu_{X},\quad\forall f\in C_c(X).
			\end{equation*}
			\item \label{item:algebraic obstruction} There exist a real number field $\bbF$ of degree $m\geq 2$ and a positive integer $r\geq d+1$ such that
			\begin{itemize}
				\item $rm=n+1$,
				\item $\mathcal{A}$ is $(\bbF, r-1)$-Liouville.
			\end{itemize}
		\end{enumerate}
        In particular, if $n+1$ is prime, then possibility~(2) does not occur. 
	\end{thm}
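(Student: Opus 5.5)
We argue by contradiction and assume that possibility~(1) fails. The plan follows the standard route: non-divergence, then invariance, then measure rigidity, then linearization. The new input is a careful analysis of the algebraic obstructions that arise for affine subspaces.

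\emph{Step 1: limit measures.} If (1) fails, there is a set $E\subset\cA$ of positive Lebesgue measure such that for each $u\in E$ some weak-$\ast$ limit $\mu_u$ of $\frac1T\int_0^T a_t u\delta_{x_0}\,\de t$ differs from $\mu_X$. By \Cref{thm:main_nondivergence} and the hypothesis $\omega(A)<n$, we may take every $\mu_u$ to be a probability measure. Each $\mu_u$ is $a_t$-invariant, but that is too weak for rigidity, so we would first upgrade the invariance. Averaging over a small ball of $u$'s in $\cA$ and using that $a_t$ expands the directions of $\cA$ uniformly, the directional derivatives along $\cA$ become invariance of a suitable average $\mu$ of the $\mu_u$ under the unipotent subgroup $W\subset U_a^+$ corresponding to the linear part of $\cA$. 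This is the Shi / Solan--Wieser argument: a local-averaging or Fubini comparison of $a_tu$ with $a_t(u+s)$ for $s$ in the linear part, which gives $\mu = (a_t w)_*$-shadowed invariance.

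\emph{Step 2: rigidity and linearization.} By Ratner's theorem, $\mu$ decomposes into $W$-ergodic algebraic components. Then $\mu\neq\mu_X$ on a positive-measure set of $u$ forces positive mass on a tube $X(H,W)$ for some proper subgroup $H$ of class $\mathcal H$, namely $\Gamma\cap H$ a lattice in $H$ with a unipotent element. By the Dani--Margulis linearization with the representation $V=\bigwedge^{\dim H}\fg$ and the vector $p_H$, positive time-proportion near $X(H,W)$ for a positive-measure set of $u$ yields the following. Either there are infinitely many $\gamma$ with $a_t u\gamma p_H$ small along long intervals, or the orbit stays uniformly in a fixed tube. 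A $(C,\alpha)$-good / Kleinbock--Margulis type argument for the polynomial maps $u\mapsto a_t u \gamma p_H$ on $\cA$ then shows that for every $\kappa$ there exist integral vectors $v=\gamma p_H$ with $\|a_t u_A \tilde{x}\,v\|$ small uniformly in $\tilde x$ over a ball. Expanding in $a_t$-weights, this means the projections of $u_A v$ onto the non-contracting weight spaces are tiny relative to $\|v\|^{\kappa}$. Here the condition $\omega(A)<n$ excludes the case where the only obstruction comes from a single rational vector in $\R^{n+1}$ or $\bigwedge^{n}\R^{n+1}$, because such a vector would give $\omega(A)\ge n$.

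\emph{Step 3: classify $H$.} This is the main obstacle. We must determine which $H$ of class $\mathcal H$ containing a conjugate of $W$, with $\Gamma\cap H$ a lattice, can produce such approximations. The expected answer is as follows. Parabolic-type or reducible $H$ (stabilizers of rational subspaces) reduce to the Diophantine condition and are ruled out by $\omega(A)<n$. Irreducible $H$ must, because $W$ and $a_t$ pin down the representation, be of the form $\SL_r(D)$ or the restriction of scalars $\operatorname{Res}_{\bbF/\Q}\SL_r$ acting on $\bbF^r\cong\Q^{n+1}$ with $rm=n+1$. I would try to prove this by noting that $H$ acts irreducibly on $\R^{n+1}$ and contains the rank-one-type unipotent group $W$; a classification of irreducible subgroups containing such transvection-like elements then forces $H$ to be a Weil restriction of $\SL_r$, which over $\R$ contains $\SL_r(\R)$ acting on an $r$-dimensional real subspace $\cT$ defined over $\bbF$. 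Then $\gamma p_H$ small along $\cA$ translates to $\cA$ lying within $H(\cT)^{-\kappa}$ of an $(r-1)$-dimensional projective subspace defined over $\bbF$. To compare heights, we use that the covolume of $\gamma H\gamma^{-1}\cap\Gamma$ relates to $H_\cL$. The condition $W\subset H$ forces $\cA\subset\cT$ approximately, hence $r\ge d+1$. Letting $\kappa\to\infty$ gives the $(\bbF,r-1)$-Liouville property. Finally, if $n+1$ is prime then $m\ge2$ and $m\mid n+1$ force $r=1<d+1$, so possibility~(2) cannot occur.
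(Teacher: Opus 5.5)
There is a genuine gap in Step~2. Your linearization works with the polynomial maps $u\mapsto a_tu\gamma p_H$ at a \emph{fixed} time $t$. What it controls is therefore the Lebesgue measure of the set of $u$ for which $a_tux_0$ lies near the singular set, and via Fubini this bounds $\int_{I^d}(1-\mathcal{C}_K^T(s))\,\de s$ for each $T$. Birkhoff genericity, however, is about the individual orbit of almost every $u$, and the limit measures are taken along $u$-dependent subsequences $T_i(u)$. A bound on the measure of the bad set at time $T$ that does not decay in $T$ gives no control of the $\limsup$ of these sets. You also cannot linearize along a single orbit, since $t\mapsto a_tux$ is exponential rather than polynomial.

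What is needed is a large-deviation bound that is summable in $T$, and supplying it is the main construction of the paper. The height function $\bN=\alpha_\eps^\theta+\beta^{\theta'}$ is built so that $\beta^{\theta'}$ measures the $\fq\times\fp$-displacement from $\cN=\exp(O)M_1pg_1x_0$. It satisfies a $c_tu(I^d)$-contraction inequality with rate $\frac{d}{d+1}\theta'$. Averaging along $b_t$ and over $\exp(\Omega)\subset Z_G(F)$ then gives $\btt$ with the following properties:
\begin{itemize}
\item it satisfies an $a_tu(I^d)$-contraction inequality;
\item $\btt=\infty$ on $\exp(\Omega)\cN$;
\item $\btt(zx_A)<\infty$ whenever $\rho_{\bN,\Omega}(zx_A)<\frac{d}{d+1}\theta'$.
\end{itemize}
This yields $\abs{\{s\in I^d:\mathcal{C}_K^T(s)\le 1-\epsilon\}}\le Ca^T$ with $K\cap\exp(\Omega)\cN=\emptyset$, and Borel--Cantelli finishes. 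Relatedly, in Step~1 you need $U$-invariance of each limit $\nu_s$ for a.e.\ $s$, as in \cite[Proposition~2.2]{Shi20}, not invariance of an average of them.

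Second, your claim that linearization gives approximations ``for every $\kappa$'' is not justified, and this is exactly what separates the Liouville conclusion from a mere Diophantine condition. A linearization dichotomy gives either a fixed algebraic obstruction or integral vectors $\gamma_tp_H$ satisfying some fixed exponent relation. Nothing in your sketch forces the height to be $e^{o(t)}$ while the distance is $e^{-(1-o(1))t}$.

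In the paper this comes from the sharpness of the threshold. Suppose $\rho_{\bN,\Omega}(zx_A)\ge\frac{d}{d+1}\theta'$. The cusp part has growth rate below $\rho_1$ uniformly over $\exp(\Omega)z$; this is where $\omega(A)<n$ and \Cref{prop:bt-Omega} enter. So the singular part must blow up, giving $\norm{v_i}\ll e^{-\xi_it_i}$ and $\norm{w_i}\ll e^{-\xi_it_i/d}$ with $\xi_i\uparrow\frac{d}{d+1}$. Applying $\Ad(b_{-t_i})$ makes the $\fq$-part of size $e^{-(1-\epsilon_i)t_i}$, while the $\fp$-part grows only to $e^{\epsilon_it_i/d}$. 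Reduction theory for $M\cap\Gamma$ together with \cite{DO22} converts this into $\norm{\gamma_i}\ll e^{o(t_i)}$, hence $H_\cL(\cT_i)\ll e^{o(t_i)}$.

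Your Step~3 classification of $H$ is more than is needed. The paper never identifies $H$. It uses Einsiedler--Shi and Tits to get $\bfV\cong\Res_{\bbF/\Q}\bfW$, and rules out $\bbF=\Q$ by \Cref{prop:sln}. It then enlarges to $M=Z_G(Z_G(H))$, a product of $\GL(V_i)$'s, whose singular set $N(F,M)$ is described explicitly in \Cref{prop:ZFM}.
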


\begin{remark} \label{rem:AinT}
    Suppose that $\mathcal{A}$ is contained in a $(r-1)$-dimensional affine subspace defined over $\bbF$ such that $rm=n+1$; see \Cref{cor:inFSubspace}. Then by \cite[Proof of Lemma~6.6]{SY24PLMS}, there exists a proper closed subset $\cK$ of $X$ such that $a_t\mathcal{A}x_0\subset \cK$ for all $t\geq 0$. So for every $u\in\cA$, the sequence of probability measures $\frac{1}{T_i}\int_{0}^{T_i}a_tu\delta_{x_0}\de t$ do not equidistribute in $X$ for any sequence $T_i\to\infty$. 
    \end{remark}

\subsubsection{Geometric formulation}
Let $W$ be a linear subspace of $\bbP^n(\R)$. As done in the `geometric formulation' given in the introduction of \cite{SY24PLMS}, we define the Diophantine exponent $\omega(W)$ of $W$ to be the supremum of $\omega$ such that the following holds: there exist infinitely many hyperplanes $Q\subset \bbP^n(\R)$ defined over $\Q$ such that $d(W, Q) \leq H(Q)^{-\omega-1}$. Here $d(W,Q)=\sup_{[x]\in W}\inf_{[y]\in Q}\frac{\norm{x\wedge y}}{\norm{x}\norm{y}}$, and $H$ denotes the Weil height on $\Gr(n,n+1)(\Q)\cong \bbP^n(\Q)$ associated with the line bundle $\mathcal{O}_{\bbP^n}(1)$. 

In the above notation, we have that $\omega(\cL_A)=\omega(A)$; see~\cite{SY24PLMS}.

\begin{thm}\label{thm:geom-equi} 
Let $\lambda$ be a finite measure on $G$ whose pushforward on $P^-_a\backslash G\cong \bfP^{n}(\R)$ is absolutely continuous with respect to the Lebesgue measure on $W$. Suppose that $\omega(W)<n$ and $W$ is not $(\mathbb{F},(r-1))$-Liouville for a real number field $\bbF$ of degree $m\geq 2$ such that $n+1=mr$. Then for $\lambda$-almost all $g\in G$, we have
\[
\frac{1}{T}\int_{0}^T f(a_tgx_0)\de t\to \int_{X} f\de \mu_{X}, \forall f\in C_c(X)
\]
\end{thm}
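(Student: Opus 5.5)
We plan to derive \Cref{thm:geom-equi} from \Cref{thm:main_equidistribution} through a sequence of reductions, using the Bruhat-type decomposition $G\supset P_a^-U_a^+$ (open and dense) and the invariance of Birkhoff genericity under $P_a^-$. Write $\pi\colon G\to P_a^-\backslash G\cong\bbP^n(\R)$. The set $P_a^-\backslash P_a^-U_a^+$ is the affine chart $\bbA^n\cong U_a^+$ of $\bbP^n(\R)$. Its complement is a hyperplane $H_\infty$, which we take to be defined over $\Q$.

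\emph{Step 1: reduce to measures on an affine subspace of $U_a^+$.} If $g=pu$ with $p\in P_a^-$ and $u\in U_a^+$, then $a_tgx_0=(a_tpa_{-t})a_tux_0$ and $a_tpa_{-t}$ converges in $G$. Hence $\frac1T\int_0^T f(a_tgx_0)\,\de t$ and $\frac1T\int_0^T f(a_t\,p_\infty^{-1}\,\cdot)\ldots$ are comparable. More precisely, by uniform continuity of $f\in C_c(X)$, $gx_0$ is generic if and only if $ux_0$ is generic, since $\mu_X$ is $G$-invariant and $p_\infty=\lim a_tpa_{-t}$ acts as a homeomorphism. So genericity of $gx_0$ depends only on $\pi(g)$.

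The pushforward $\pi_*\lambda$ is absolutely continuous with respect to Lebesgue measure on $W$. Two cases arise. If $W\not\subset H_\infty$, then $W\cap H_\infty$ is Lebesgue-null in $W$, and $W\cap\bbA^n$ is an affine subspace $\cA$ of $U_a^+$ whose projective closure is $W$. In this case it suffices to show that Lebesgue-a.e.\ $u\in\cA$ gives a generic point $ux_0$. If $W\subset H_\infty$, we move $W$ off $H_\infty$ with an element $\gamma\in\Gamma=\SL_{n+1}(\Z)$: replace $g$ by $g\gamma$, noting $g\gamma x_0=gx_0$. Right multiplication by $\gamma$ acts on $\bbP^n(\R)$ by a rational projective transformation, so it preserves $\omega(\cdot)$, preserves the $(\bbF,r-1)$-Liouville property (heights change by bounded factors), and maps Lebesgue-null sets to Lebesgue-null sets. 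Choosing $\gamma$ with $W\gamma\not\subset H_\infty$ reduces to the first case.

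\emph{Step 2: match the hypotheses.} If $\dim\cA=d=n$ the claim is the classical statement recalled in the introduction, so assume $d<n$. After permuting coordinates, which is again a rational change, $\cA=\cL_A$ for some $A\in\Mat_{d+1,n-d}(\R)$. By the stated identity $\omega(\cL_A)=\omega(A)$ we get $\omega(A)=\omega(W)<n$. By \Cref{def:F-Liouville}, an affine subspace is $(\bbF,r-1)$-Liouville exactly when its projective closure $W$ is, so alternative~(2) of \Cref{thm:main_equidistribution} is excluded. (If $r<d+1$, alternative~(2) is excluded regardless.) Therefore alternative~(1) holds, and Lebesgue-a.e.\ $u\in\cA$ is generic. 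Pulling back through $\pi$ and using the absolute continuity of $\pi_*\lambda$ gives the conclusion for $\lambda$-a.e.\ $g$.

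The main obstacle is the bookkeeping in Step 1. We must check that right multiplication by $\gamma\in\Gamma$, together with coordinate permutations, preserves both the Diophantine exponent and Liouville-ness. We must also make precise the passage from $\pi(g)$ to $u$, including measurability and the fact that genericity is a $P_a^-$-invariant property of $gx_0$. For the latter we will use that $d(a_tpa_{-t}y,\,p_\infty y)\to0$ uniformly in $y$, together with the $G$-invariance of $\mu_X$. Each of these is routine, but they need to be stated carefully.
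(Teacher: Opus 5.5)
Your proposal is correct and is essentially the deduction from \Cref{thm:main_equidistribution} that the paper leaves to the reader: genericity of $gx_0$ depends only on $P_a^-g$, and moving $W$ off the rational hyperplane at infinity by some $\gamma\in\Gamma$ preserves $\omega$ and Liouville-ness, so one reduces to Lebesgue-a.e.\ points of $\cA=W\cap\bbA^n$. There $\omega(\cL_A)=\omega(A)$, together with the definition of Liouville-ness via projective closure, rules out possibility~(2); note also that the case $\dim W=0$ is vacuous, since Dirichlet's theorem forces $\omega(W)\geq n$ there.
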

We leave it to the reader to verify that \Cref{thm:geom-equi} can be deduced from \Cref{thm:main_equidistribution}.

	\subsection{Dirichlet non-improvability on subspaces} 

    Initial motivation for our study comes from the Diophantine approximation. {Denote by $\|\cdot\|$ the supremum norm on $\bbR^n$, where $n\geq 1$ (unless specified otherwise, all the norms on $\bbR^n$ will be taken to be the supremum norm).} Let $0<\delta\leq 1$. {Following} Davenport and Schmidt \cite{DS6970}, let $\DI(\delta)$ denote the set of vectors $\bfx\in\bbR^n$ such that for all large $T\geq 1$,
 \begin{equation}\label{eq:lf}
	\exists (p,\bfq)\in \bbZ\times (\bbZ^n\setminus \{0\})\text{ such that }	\begin{cases}
		\abs{\bfx\cdot\bfq+p}\leq \delta T^{-n} \\
		\|\bfq\| \leq  \delta T.\end{cases}
		\end{equation}
		Similarly, let $\DI'(\delta)$ denote the set of vectors $\bfx\in\bbR^n$ such that for all large $T\geq 1$, 
  \begin{equation}\label{eq:vect}
      \exists (\bfp,q)\in \bbZ^n \times  (\bbZ\setminus\{0\}) \text{ such that }   \begin{cases}
		\|q\bfx+\bfp\|\leq \delta T^{-1} \\
		\abs{q} \leq  \delta T^n.
		\end{cases}
		\end{equation}

	By Dirichlet's approximation theorem, $\DI(1)=\bbR^n$ and $\DI'(1)=\bbR^n$. 
    
    Let $\DI=\cup_{0<\delta<1}\DI(\delta)$ and $\DI'=\cup_{0<\delta<1}\DI'(\delta)$, denote the sets of Dirichlet's improvable vectors. Davenport and Schmidt~\cite{DS6970} proved that the sets $\DI$ and $\DI'$ are Lebesgue null in $\R^n$, see also \cite{KW08}. By the arguments as in \cite{KW08,SW17} involving Dani-correspondence, it is straightforward to deduce the following result from \Cref{thm:main_equidistribution}. 

    \begin{thm}
        \label{thm:Dirichlet}
        Let $\cA$ be a proper $d$-dimensional affine subspace of $\R^{n}$ that does not satisfy the condition in possibility~(\ref{item:algebraic obstruction}) of \Cref{thm:main_equidistribution}, and suppose that $\omega(\cA)<n$. Then $\DI\cap\cA$ and $\DI'\cap \cA$ are Lebesgue null in $\cA$.  
    \end{thm}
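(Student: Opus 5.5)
We will deduce \Cref{thm:Dirichlet} from \Cref{thm:main_equidistribution} through the Dani correspondence, in the manner of \cite{KW08,SW17}. Since $\omega(\cA)=\omega(A)<n$ and $\cA$ does not satisfy possibility~(\ref{item:algebraic obstruction}), \Cref{thm:main_equidistribution} gives that the orbit $\{a_tux_0\}_{t\ge0}$ equidistributes with respect to $\mu_X$ for almost every $u\in\cA$. It therefore suffices to show that for every $\bfx\in\DI$ (resp.\ $\DI'$), the point $u_{\bfx}x_0$ is \emph{not} Birkhoff generic for the relevant flow.

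\textbf{Dani correspondence for $\DI$.} The lattice $u_{\bfx}\Z^{n+1}$ consists of the vectors $(p+\bfx\cdot\bfq,\bfq)$. Given $T$, we choose $t$ with $e^{t}=T^{n+1}$. Then $a_t$ multiplies the first coordinate by $T^{n}$ and the remaining coordinates by $T^{-1}$. Hence condition \eqref{eq:lf} at $T$ says exactly that the lattice $a_tu_{\bfx}\Z^{n+1}$ contains a nonzero vector in the box $[-\delta,\delta]^{n+1}$. (The case $\bfq=0$ forces $p=0$ once $\delta T^{-n}<1$, so the vector is indeed nonzero.) Consequently, if $\bfx\in\DI(\delta)$ with $\delta<1$, then for all large $t$ the point $a_tu_{\bfx}x_0$ lies in the closed set
\[
K_\delta=\{\Lambda\in X:\Lambda\cap[-\delta,\delta]^{n+1}\neq\{0\}\}.
\]
By Minkowski's theorem the open cube $(-1,1)^{n+1}$, which has volume $2^{n+1}$, contains a nonzero point of every unimodular lattice. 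On the other hand, the set $X\setminus K_\delta$ is open, and it is nonempty for $\delta<1$: for instance, a lattice obtained by a small perturbation of $\Z^{n+1}$ (a suitable $\mathrm{SO}$-rotation of it) has no nonzero point in $[-\delta,\delta]^{n+1}$. So $\mu_X(X\setminus K_\delta)>0$. Pick $f\in C_c(X)$ with $0\le f\le1$, supported in $X\setminus K_\delta$, and $\int f\,\de\mu_X>0$. The Birkhoff averages of $f$ along $a_tu_{\bfx}x_0$ tend to $0$, so $u_{\bfx}x_0$ is not generic. Taking the union over $\delta=1-1/k$ shows $\DI\cap\cA$ is contained in the null set of non-generic points.

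\textbf{The dual set $\DI'$.} Here we use the transpose-inverse automorphism $g\mapsto {}^tg^{-1}$ of $\SL_{n+1}(\R)$, which preserves $\Gamma$ and $\mu_X$. It maps $a_t$ to $a_{-t}$, which has the uniform contraction pattern $\operatorname{diag}(e^{-nt/(n+1)},e^{t/(n+1)}I_n)$, and it maps $u_{\bfx}$ to the lower-triangular unipotent matrix $v_{-\bfx}$. Condition \eqref{eq:vect} is then a small-vector condition for the lattice ${}^ta_t^{-1}\,{}^tu_{\bfx}^{-1}\Z^{n+1}$, i.e.\ for the dual lattice of $a_tu_{\bfx}\Z^{n+1}$. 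Since $\Lambda\mapsto\Lambda^*$ is a $\mu_X$-preserving homeomorphism of $X$, genericity of $u_{\bfx}x_0$ for $a_t$ implies genericity of the dual orbit. The argument of the previous paragraph then applies with $K'_\delta$, the set of lattices whose dual meets the box $[-\delta,\delta]^{n+1}$.

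\textbf{Main obstacle.} The only point needing care is checking the box normalization, namely matching $t$ with $T$ and the scalings $T^{-n}$, $\delta T$ (resp.\ $\delta T^{-1}$, $\delta T^n$) against the exponents $\frac{n}{n+1}$ and $\frac1{n+1}$. The passage from "for all large $T$" to "for all large $t$" is immediate because $t\mapsto T=e^{t/(n+1)}$ is a bijection of the half-line.
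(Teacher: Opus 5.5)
Your proposal is correct and is exactly the Dani-correspondence deduction from \Cref{thm:main_equidistribution} that the paper intends by citing \cite{KW08,SW17}, including the duality treatment of $\DI'$. One harmless slip: your side remark that the open cube $(-1,1)^{n+1}$ contains a nonzero point of every unimodular lattice is false ($\Z^{n+1}$ itself is a counterexample, and Minkowski's theorem needs the closed cube). It is never used, and in fact $x_0=\Z^{n+1}$ already lies in the open sets $X\setminus K_\delta$ and $X\setminus K'_\delta$, so no rotation or perturbation is needed.
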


    The result weakens the conditions for Dirichlet non-improvability in \cite[Theorem~1.3]{SY24PLMS} for the case of $d=1$, $n\geq 3$ and $n+1$ is even (for $\cT=\N$ in its statement). 

     Following \cite[Theorem~1.1]{SolanWeiser25} we formulate a density result on improvability of Dirichlet's approximation on affine subspaces of $\R^n$. For any $\bfx\in\R^d$ and $\delta\in (0,1)$, let
    \begin{align*}
    \overline\cD_{\bfx}(\delta)
    &=\limsup_{N\to\infty} \frac{1}{\log N}\sum_{\substack{T\in\{1,\ldots,N\}:\\ \text{\eqref{eq:lf} holds for $T$}}} \frac{1}{T}, \\
    \underline\cD_{\bfx}(\delta)
    &=\liminf_{N\to\infty} \frac{1}{\log N}\sum_{\substack{T\in\{1,\ldots,N\}:\\ \text{\eqref{eq:lf} holds for $T$}}} \frac{1}{T},
    \end{align*}
    and similarly define $\overline{\cD'}_{\bfx}(\delta)$ and $\underline{\cD'}_{\bfx}(\delta)$, where $T$ in the corresponding sum satisfies \eqref{eq:vect}. Using the arguments of the proof of \cite[Theorem~1.1]{SolanWeiser25}, the following result is a direct consequence of \Cref{thm:main_equidistribution}.

    \begin{thm}
        \label{thm:dirichlet-density} There exist continuous
strictly increasing functions $f$ and $g$ from $[0, 1]$ to $[0, 1]$ with $f(0)=g(0)=0$ and $f(1)=g(1)=1$ with the following property: Let $\cA$ be an affine subspace of $\R^n$ that does not satisfy possibility~(\ref{item:algebraic obstruction}) of \Cref{thm:main_equidistribution}, and suppose that $\omega(\cA)<n$. Then for every $0<\delta<1$ and Lebesgue almost every $\bfx\in \cA$, we have 
\[
\overline\cD_{\bfx}(\delta)=\underline\cD_{\bfx}(\delta)=f(\delta) \text{ and } \overline{\cD'}_{\bfx}(\delta)=\underline{\cD'}_{\bfx}(\delta)=g(\delta). 
\]
    \end{thm}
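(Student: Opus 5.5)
The plan is to deduce \Cref{thm:dirichlet-density} from \Cref{thm:main_equidistribution} via the Dani correspondence, following the proof of \cite[Theorem~1.1]{SolanWeiser25}. We treat $\DI$ (the linear-form condition \eqref{eq:lf}); the case \eqref{eq:vect} is symmetric, using the transposed action (or duality of lattices) to get $g$. For $\bfx\in\R^n$ let $u_\bfx\in U_a^+$ be the corresponding element and $\Lambda_{\bfx,t}=a_tu_\bfx\Z^{n+1}$, with the sign convention chosen so that \eqref{eq:lf} is realized by $a_t$. Set $e^{t}\asymp T^{n+1}$ up to a fixed normalization, so $t=(n+1)\log T$ up to a constant. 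Then \eqref{eq:lf} holds for $T$ if and only if $\Lambda_{\bfx,t}$ meets the box
\[
B_\delta=\{v\in\R^{n+1}:\abs{v_1}\le\delta,\ \norm{(v_2,\dots,v_{n+1})}\le\delta\}
\]
in a nonzero vector (the precise box shape is fixed by the normalization of $a_t$). Define $E_\delta=\{\Lambda\in X:\Lambda\cap B_\delta\neq\{0\}\}$. Since $\sum_{T\le N}\frac1T\mathbf 1_{\eqref{eq:lf}}$ is a Riemann sum for $\int_0^{\log N}\mathbf 1_{E_\delta}(a_{(n+1)s}u_\bfx x_0)\,\de s$ up to $O(1)$ errors, we get
\[
\frac{1}{\log N}\sum_{\substack{T\le N:\\ \eqref{eq:lf}}}\frac1T=\frac{1}{\tau}\int_0^{\tau}\mathbf 1_{E_\delta}(a_t u_\bfx x_0)\,\de t+o(1),\qquad \tau=(n+1)\log N .
\]
One point needs care here: passing from the integer variable $T$ to continuous $t$ requires comparing $E_\delta$ at nearby times. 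For $T\le T'\le T+1$ the flow moves only by $a_{O(1/T)}$, so the indicator is sandwiched between those of $E_{\delta(1\pm\eps)}$ for $T$ large.

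The next step is to apply \Cref{thm:main_equidistribution}. By assumption possibility~(\ref{item:algebraic obstruction}) fails and $\omega(\cA)=\omega(A)<n$, so for almost every $\bfx\in\cA$ the point $u_\bfx x_0$ is Birkhoff generic for $a_t$. The function $\mathbf 1_{E_\delta}$ is not in $C_c(X)$, so we approximate it. The set $E_\delta$ is closed, and its boundary is contained in the set of lattices having a nonzero vector on $\partial B_\delta$; this is a $\mu_X$-null set by Siegel's formula. Moreover $X\setminus E_\delta$ is relatively compact by Mahler's criterion, while $E_\delta$ is not. Hence the bounded function $\mathbf 1_{X\setminus E_\delta}$ is the indicator of a relatively compact set with null boundary. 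We approximate it from above and below by $C_c$ functions whose $\mu_X$-integrals are arbitrarily close. Birkhoff genericity then gives
\[
\lim_{\tau\to\infty}\frac1\tau\int_0^\tau \mathbf 1_{E_\delta}(a_tu_\bfx x_0)\,\de t=\mu_X(E_\delta).
\]
To justify this for all $\delta$ at once, we take the full-measure set over rational $\delta$ and use monotonicity of $E_\delta$ in $\delta$ together with continuity of $\delta\mapsto\mu_X(E_\delta)$. This also absorbs the $\delta(1\pm\eps)$ perturbation from the previous step, and it shows the limsup and liminf coincide.

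Finally set $f(\delta)=\mu_X(E_\delta)$, extended by $f(0)=0$. Continuity follows from the null boundaries. Strict monotonicity holds because $E_{\delta'}\setminus E_\delta$ contains an open set for $\delta<\delta'$. We have $f(1)=1$ because, by Minkowski's theorem, every unimodular lattice meets the volume-$2^{n+1}$ box $B_1$ nontrivially. As $\delta\to0$, $f(\delta)\to0$ since $E_\delta$ shrinks toward the cusp. The function $g$ is defined analogously with the dual box. The main obstacle is the bookkeeping in the discretization step. It is handled exactly as in \cite{SolanWeiser25}, and it is the only place where the input differs: genericity for a.e.\ point of the affine subspace, rather than of a curve, which is exactly what \Cref{thm:main_equidistribution} supplies.
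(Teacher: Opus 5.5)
Your proposal is correct and takes essentially the same route as the paper, which gives no details and simply says that \Cref{thm:dirichlet-density} follows from \Cref{thm:main_equidistribution} by the argument of \cite[Theorem~1.1]{SolanWeiser25}. That argument is the Dani correspondence with the cube $[-\delta,\delta]^{n+1}$, the $E_{\delta(1\pm\epsilon)}$ sandwich for passing from integer $T$ to continuous time, and approximation of the indicator of the relatively compact, $\mu_X$-null-boundary set $X\setminus E_\delta$ by $C_c(X)$ functions, which is exactly what you carried out; as a small aside, because both \eqref{eq:lf} and \eqref{eq:vect} use sup norms and $\Lambda\mapsto\Lambda^*$ preserves $\mu_X$, your dual-box function $g$ actually coincides with $f$.
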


\subsection{Strategy of the proof and organization of the paper}
The general strategy of the proof in this article is based on extending the techniques developed in \cite{Shi20}, \cite{PSS23}, and \cite{SY24TAMS}. In \Cref{sec:non-escape} we describe and recall properties of the Margulis height function on $G/\Gamma$ as defined in \cite{BQ12} for the action of $H_d=\SL_{d+1}(\R)$. We study the exponential growth rate of the height function along trajectories of a diagonal group $\{b_t\}$ commuting with $H_d$. Using the relation between the growth rate and Diophantine exponent obtained in \cite{SY24TAMS}, we define a modified height function with contraction property for the $\{a_t\}$-action. This allows us to prove \Cref{thm:main_nondivergence} in view of the methods developed in \cite{Shi20}.  In \Cref{sec:Sing}, we recall a result inspired by \cite{CE15} from \cite{Shi20} to obtain unipotent invariance of the limiting measures for almost all ergodic averages. These limiting measures are invariant under the simple diagonal group $\{a_t\}$ by definition. These observations allow one to use measure rigidity theorems of Ratner \cite{Ratner:measure} as done by Einsiedler and Shi \cite{EinsiedlerShi19}. They show that if such a limit measure is not $G$-invariant, then it must be positive on certain algebraic subvariety of $G$ projected to $G/\Gamma$; called singular manifolds. To analyze how the ergodic averages assigns mass to the singular manifold, we carefully construct a height function to measure `distance' from the singular submanifold. We ensure that this function satisfies nice properties, in particular it satisfies the contraction property with respect to the $a_t$-action. The contraction property is related to exponential growth of a new height function along the $b_t$-trajectories. This construction of the height function given in \Cref{sec:Sing} is a novel part of this article.  In \Cref{sec:obstruction} we interpret the high exponential growth rate in terms of the $\bbF$-Liouville property. 
    
	\section{Non-escape of mass}
	\label{sec:non-escape}
	In this section, we establish pointwise nonescape of mass on affine subspaces with small Diophantine exponents. We follow the strategy of \cite[Section 4]{Shi20}. Many of the results in this section have been proven in \cite{Shi20} and \cite{SY24TAMS}, and we provide some details for the readers' convenience.

    \subsection{Notation} \label{subsec:notation}
	Let $G=\SL_{n+1}(\bbR)$ and 
    $H_d=\begin{psmallmatrix}
	\SL_{d+1}(\bbR) & \\
	& I_{n-d}
	\end{psmallmatrix}$, where $I_k$ denotes the $k\times k$-identity matrix. Consider the one-parameter diagonal subgroups of $G$ defined by the following: For all $t\in\R$,
	\begin{align}
	&a_t=\begin{psmallmatrix}
	e^{\frac{n}{n+1}t} & \\
	& e^{-\frac{1}{n+1}t}I_n
	\end{psmallmatrix},\label{eq:gt}\\
	&b_t=\begin{psmallmatrix}
	e^{\frac{n-d}{(d+1)(n+1)}t}I_{d+1} & \\
	& e^{-\frac{1}{n+1}t}I_{n-d}
	\end{psmallmatrix}\in Z_G(H_d)\text{, and } \label{eq:bt}\\
	&c_t=\begin{psmallmatrix}
	e^{\frac{d}{d+1}t} & & \\
	& e^{-\frac{1}{d+1}t}I_d & \\
	& & I_{n-d}
	\end{psmallmatrix}\in H_d. \label{eq:ct}
	\end{align}
	Then $a_t=b_tc_t$. With this normalization, the eigenvalues of $\Ad(a_t)$ on the Lie algebra of $U_a^+$ and the eigenvalues of $\Ad(c_t)$ on the Lie algebra of $U_a^+\cap H_d$ are all equal to $e^t$. 

    Let $u:\R^d\to U\defeq U_a^+\cap H_d$ be the map defined as
    \begin{equation}
    \label{eq:us}
    u(s)=\begin{psmallmatrix}
        1& s &0\\
        & I_{d}\\
        && I_{n-d}
    \end{psmallmatrix}\in U=U_a^+\cap H_d, \quad \forall s\in \R^d.
    \end{equation}

    Given $A\in\Mat_{d+1,n-d}(\R)$, we write $A$ in the block form
	\[
	A=\begin{psmallmatrix}
	A_1 \\
	A_2
	\end{psmallmatrix} \text{, where $A_1\in\operatorname{Mat}_{1,n-d}(\R)$ and $A_2\in\operatorname{Mat}_{d,n-d}(\R)$.}
	\]
	 Let
    $\mathcal{A}=\{(s,\tilde sA)\colon \bfx\in\R^d\}$, where $\tilde s=(1,s)\in\R^{d+1}$ for every $s\in\R^d$.
    We identify $\cA$ with an affine subspace of $U_a^+=\R^n$.  Let
	\begin{equation} \label{eq:z_A}
	z_A=\begin{psmallmatrix}
	1   &    &       \\
	    & I_d & -A_2 \\
	    &     & I_{n-d}
	\end{psmallmatrix}\in Z_G(\{a_t\}).
	\end{equation}
    Now for every $s\in\R^d$, we have 
	\begin{equation*} 
	\begin{psmallmatrix} 1&s&A\widetilde{s}\\&I_d\\&&I_{n-d}\end{psmallmatrix}= z_Au(s)u_A\in\cA\subset U_a^+.
    \end{equation*}
    Let  $x_A=u_Ax_0$. Then, for any $t\in\R$ and $s\in\R^d$,
    \begin{equation} \label{eq:basic identity}
    a_t\begin{psmallmatrix} 1&s&A\widetilde{s}\\&I_d\\&&I_{n-d}\end{psmallmatrix}x_0=z_Aa_tu(s)x_A.
	\end{equation}
Therefore, since $\mu_X$ is $z_A$-invariant, the possibility~(\ref{item:equidistribution}) can be equivalently expressed as: For Lebesgue almost all $s\in\R^d$,
\begin{equation}
    \label{eq:equidistribution}
    \lim_{T\to\infty}\frac{1}{T}\int_{0}^{T}f(a_tu(s)x_A)\de t =
			\int_{X} f \de\mu_{X},\quad\forall f\in C_c(X).
\end{equation}
    \subsection{Height function with respect to the point at infinity} \label{subsect:margulis function w.r.t. infty}
	Restricting the standard action of $G$ on $\bbR^{n+1}$ to $H_d$, we have the following decomposition of $H_d$-modules
	\begin{equation*}
	\bbR^{n+1}=V_0^\perp\bigoplus V_0,
	\end{equation*} 
	where $V_0^\perp$ is the $\bbR$-span of $\{e_0,\dots,e_d\}$ and $V_0$ is the $\bbR$-span of $\{e_{d+1},\dots,e_{n}\}$. Here, $V_0^\perp$ is the standard representation of $H_d\cong \SL_{d+1}(\R)$ and $H_d$ acts trivially on $V_0$.
	For $0\leq k\leq n+1$, taking the $k$-exterior products we get
	\begin{equation}\label{eq:decomposition of exterior representations}
	\bigwedge^k\bbR^{n+1}=
	\bigoplus_{i=\max\{0,k-(n-d)\}}^{\min\{d+1,k\}}
	\bigwedge^i(V_0^\perp)\bigotimes\bigwedge^{k-i}V_0,
	\end{equation}
	because if $\bigwedge^i(V_0^\perp)\neq 0$, then $0\leq i\leq d+1$, and if $\bigwedge^{k-i}V_0\neq 0$, then $0\leq k-i\leq n-d$.  
	
	We recall the construction of a Margulis height function $\alpha:X\to [0,+\infty]$ from \cite{BQ12}. We shall specify it in our setting, and optimize the constants. 
	In view of \eqref{eq:decomposition of exterior representations}, for $\max\{0,k-(n-d)\}\leq i\leq\min\{d+1,k\}$, let $\pi_i$ denote the projection
	 \begin{equation} \label{eq:pi-i}
     \pi_i:\bigwedge^k\bbR^{n+1}\longrightarrow\bigwedge^i(V_0^\perp)\bigotimes\bigwedge^{k-i}V_0.
	\end{equation}
	Since $V_0$ and $V_0^\perp$ are $N_G(H_d)$-invariant, each $\pi_i$ is $N_G(H_d)$-equivariant. 
    
    Now
    \[
\left(\bigwedge^0(V_0^\perp)\bigotimes\bigwedge^{k}V_0\right)\bigoplus\left(\bigwedge^{d+1}(V_0^\perp)\bigotimes\bigwedge^{k-(d+1)}V_0\right)
    \]
    is the space of $H_d$-fixed $k$-vectors. Let $\pifix=\pi_0\oplus\pi_{d+1}$. 
	
	We take $\delta_k=(n+1-k)k$ for $0\leq k\leq n+1$. Let $\eps>0$ and $0<k<n+1$. For every $v\in\bigwedge^k\bbR^{n+1}$, we let
	\begin{equation}\label{eq:def_phi_eps}
	\varphi_{\eps}(v)=\begin{cases}
	\min_{1\leq i\leq d}\eps^{\frac{d+1}{d-i+1}\delta_k}\norm{\pi_i(v)}^{-\frac{d+1}{d-i+1}}, & \text{if } \norm{\pifix(v)}<\eps^{\delta_k},\\
	1, & \text{otherwise}.
	\end{cases}
	\end{equation}
	
	Given $\theta>0$ and $\eps>0$, for $y\in X$ we define
	\begin{equation}\label{eq:alpha_eps^theta}
	\alpha_\eps^{\theta}(y)=\max_v \varphi_\eps^\theta(v)\in [1,+\infty],
	\end{equation}
	where $v$ varies over all nonzero $y$-integral decomposable vectors in        $\cup_{k=1}^{n}\bigwedge^k\bbR^{n+1}$; that is if $y=g\Z^{n+1}$ for some $g\in\SL_{n+1}(\R)$, then $v=g(v_1\wedge \cdots \wedge v_k)$ for some $1\leq k\leq n$ and linearly independent $v_1,\ldots,v_k\in\Z^{n+1}$. 

    To simplify notation, we write $\alpha_\eps:=\alpha_\eps^1$. Note that $\alpha_\eps^\theta=(\alpha_\eps)^\theta$.
    
    \begin{remark} \label{rem:alpha-eps}
        \begin{enumerate}
            \item 
        We note that, if $\alpha_\eps^\theta(x)<\infty$, then $\alpha_\eps^\theta(gx)<\infty$ for all $g\in H_d$. In fact, $\alpha_\eps^\theta$ is Lipschitz with respect to the action of $H_d$; see~\cite[Lemma 4.1]{Shi20}.

\item For any compact set $K\subset X$, there exists $\eps\in(0,1)$ such that $\sup_{y\in K}\alpha_\eps^\theta(y)<\infty$.
 \end{enumerate}

    \end{remark}

\subsection{Exponential growth rate of height function along \texorpdfstring{$b_t$}{b\_t}-trajectory}

	For each $x\in X$, 
    we define the exponential growth rate of the trajectory $\{b_tx\}_{t\geq0}$ to be
	\begin{equation} \label{eq:definition of rho}
    \rho_{\eps,\theta}(x)\defeq  \limsup_{t}\frac{\log \alpha^\theta_\eps(b_tx)}{t}\geq 0.
	\end{equation}

\begin{remark}
  \label{rem:rhoxA}
     Let $A\in\Mat_{d+1,n-d}(\R)$. Let $x_A=u_Ax_0$ be as defined in \eqref{eq:uA}. By \cite[Proposition 4.1]{SY24TAMS}, if $\omega(A)<n$, then $\rho_{\eps,\theta}(x_A)<\theta$.   
\end{remark}

Let $P_b^-$ denote the stable parabolic subgroup associated to $\{b_t\}$; that is, 
\begin{equation} \label{eq:Pb-}
P_b^-=\{\omega\in G: \lim_{t\to\infty} b_t\omega b_t^{-1}\in G \text{ exists}\}.
\end{equation}

We note that $N_G(H_d)=Z_G(\{b_t\})\subset P_b^{-}$. Also, $Z_G(\{a_t\}\{u(s)\}_{s\in\R^d})\subset P_b^-$, see \Cref{lem:ZGF-Pb-}. 

\begin{lem} \label{lem:rhoPb-} Let $x\in X$. If $\rho_{\eps,\theta}(x)\leq\theta$, then
\[
\rho_{\eps,\theta}(\omega x)=\rho_{\eps,\theta}(x),\,\forall \omega\in P_b^-.
\]
\end{lem}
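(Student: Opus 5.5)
Write $\omega\in P_b^-$ as $\omega=zv$ with $z\in Z_G(\{b_t\})=N_G(H_d)$ and $v$ in the contracting horospherical subgroup $U_b^-=\{g:b_tgb_{-t}\to e\}$. This is the Levi decomposition of $P_b^-$. The plan is to treat the two factors separately. Then I apply the result to $\omega$ and to $\omega^{-1}$ to get equality rather than a single inequality.

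\emph{Contracting part.} For $v\in U_b^-$ we have $b_tvx=(b_tvb_{-t})b_tx$, and $g_t\defeq b_tvb_{-t}\to e$. It therefore suffices to compare $\alpha_\eps^\theta(g y)$ with $\alpha_\eps^\theta(y)$ for $g$ in a small neighbourhood of $e$ in $G$, not merely in $H_d$. I plan to do this directly from the definition \eqref{eq:def_phi_eps}. For $g$ near $e$ and every $v\in\bigwedge^k\R^{n+1}$, each projection satisfies $\norm{\pi_i(gv)}\ge c\norm{\pi_i(v)}-C\norm{g-e}\norm{v}$, and $\pifix(gv)$ is close to $\pifix(v)$. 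This is the main obstacle. The mixing between components is controlled by $\norm{v}$, not by $\norm{\pi_i(v)}$, so an \emph{a priori} bound is needed to compare them.

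To get that bound I use the hypothesis $\rho_{\eps,\theta}(x)\le\theta$, together with the fact that $b_tvb_{-t}-e$ decays exponentially, at rate at least $e^{-\lambda t}$ where $\lambda$ is the smallest gap between the $b_t$-eigenvalues. The key point is that $g_t$ lies in $U_b^-$, which is built from the blocks mapping $V_0^\perp$ to $V_0$ or the reverse. The component of $g_tv$ that is not already present in $v$ is therefore bounded by $e^{-\lambda t}$ times components of $v$ in other $\pi_j$ pieces. I will use the identity $b_t g_t=v' b_t$ to transfer this back to vectors of $vx$ and $x$. Because $\alpha$ cannot grow faster than rate $\theta$ (up to $e^{o(t)}$), the error terms are at most $e^{-\lambda t}e^{(\theta+o(1))t/\text{const}}$ times the main terms. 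I expect this to give, for large $t$,
\[
\alpha^\theta_\eps(b_tvx)\le e^{o(t)}\,\alpha^\theta_\eps(b_tx)^{1+o(1)}+C .
\]
The constants have to be checked against the normalisation in \eqref{eq:bt}. If that proves delicate, my fallback is a softer estimate: first show that $\alpha_\eps^\theta$ is log-Lipschitz under all of $G$, that is, $\alpha(gy)\le C_g\alpha(y)^{M}$, and then use that $g_t\to e$ makes the exponent $M\to1$.

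\emph{Centraliser part.} For $z\in N_G(H_d)$, the element $z$ commutes with $b_t$, so $b_tzx=zb_tx$. Since $z$ preserves $V_0$ and $V_0^\perp$, every $\pi_i$ and $\pifix$ is $z$-equivariant. It follows that $\norm{\pi_i(zv)}$ and $\norm{\pifix(zv)}$ are comparable to the unperturbed norms with constants depending only on $z$. Consequently $\alpha_\eps^\theta(zy)$ is bounded between $c_z\,\alpha_{\eps'}^\theta(y)$ and $C_z\,\alpha_{\eps''}^\theta(y)$. Here a change of $\eps$ only changes the threshold in \eqref{eq:def_phi_eps}, and this should not affect the limsup growth rate, because vectors below the threshold dominate the growth. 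Dividing the logarithm by $t$ then gives $\rho(zx)=\rho(x)$.

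Combining the two parts gives $\rho(\omega x)\le\rho(x)$. To get the reverse inequality, note that $\rho(\omega x)\le\rho(x)\le\theta$, so the hypothesis holds at $\omega x$, and I apply the same argument with $\omega^{-1}$.
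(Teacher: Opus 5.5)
You decompose $\omega=zv$ and at the end apply the argument to $\omega^{-1}$; both steps are fine. But there is a genuine gap at the threshold $\norm{\pifix(\cdot)}<\eps^{\delta_k}$ in \eqref{eq:def_phi_eps}.

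\textbf{The gap.} In the centraliser step you only get $\alpha^\theta_\eps(zy)\le C_z\,\alpha^\theta_{\eps''}(y)$ with $\eps''>\eps$. The same problem occurs, unnoticed, in your contracting step. There $g_t^{-1}$ leaves $\pi_{d+1}$ unchanged but shifts $\pi_0$ by a small nonzero amount. So a vector just below the threshold for $b_tvx$ can be just above it for $b_tx$; saying that $\pifix$ is ``close'' is not enough.

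Everything therefore rests on $\rho_{\eps'',\theta}(x)\le\rho_{\eps,\theta}(x)$ for $\eps''>\eps$. You support this only by ``vectors below the threshold dominate the growth'', which is not an argument. A vector with $\norm{\pi_0}$ and $\norm{\pi_{d+1}}$ both comparable to $\eps^{\delta_k}$ counts for $\eps''$ but not for $\eps$. No time shift $b_s$ lowers both components, since it scales them by $e^{-\frac{k}{n+1}s}$ and $e^{(1-\frac{k}{n+1})s}$ respectively.

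The missing idea is decomposability. Write $\rho_0$ for the growth rate being transferred, as in the paper, and let $w$ be a decomposable vector with $\norm{\pi_l(w)}\lesssim e^{-(1-\frac{l}{d+1})\theta^{-1}\rho_0 t}$ for $1\le l\le d$. The Pl\"ucker inequality $\norm{\pi_{d+1}(w)}\,\norm{\pi_0(w)}\le k\,\norm{\pi_d(w)}\,\norm{\pi_1(w)}$ then gives $\norm{\pi_0(w)}\,\norm{\pi_{d+1}(w)}\lesssim e^{-\theta^{-1}\rho_0 t}$. So one of the two components is exponentially small. A shift by $s=o(t)$ in the appropriate direction then sends $\pifix\to 0$, at a cost of only $e^{o(t)}$ on the $\pi_l$. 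This is the Claim in Case~2 of the proof of \Cref{prop:bt-Omega}. The $\eps$-independence you invoke is \Cref{rem:eps12}, which the paper derives \emph{from} that argument, so you cannot quote it here as a black box.

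\textbf{The contracting step.} Once the threshold is handled, your same-time comparison for $v\in U_b^-$ is a legitimate and somewhat simpler alternative to the paper's device. The paper instead flows back to $t_m'$ and conjugates by the bounded elements $b_{t'}\omega^{-1}b_{-t'}$. However, the mechanism is not the one you describe.

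$U_b^-$ consists only of the block $V_0^\perp\to V_0$, not ``or the reverse''. So for $v=\exp(N)$, the element $g_t^{-1}=\exp(-e^{-t/(d+1)}N)$ maps $W_l$ into $W_l\oplus\cdots\oplus W_0$, and its $W_{l+j}\to W_l$ block has size $O(e^{-jt/(d+1)})$. Let $\xi$ be an $x$-integral decomposable vector and suppose $w=g_tb_t\xi$ satisfies:
\begin{itemize}
\item $\norm{\pi_l(w)}\le\eps^{\delta_k}e^{-(1-\frac{l}{d+1})\theta^{-1}\rho_0 t}$ for $1\le l\le d$;
\item $\norm{\pi_{d+1}(w)}<\eps^{\delta_k}$.
\end{itemize}
Then
$\norm{\pi_l(b_t\xi)}\lesssim\max_{j\ge 0}e^{-\frac{j}{d+1}t}\norm{\pi_{l+j}(w)}\lesssim\eps^{\delta_k}e^{-(1-\frac{l}{d+1})\theta^{-1}\rho_0 t}$, and this holds exactly because $\rho_0\le\theta$.

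No a priori bound on $\alpha^\theta_\eps(b_tx)$ enters. The hypothesis is used only through truncating to $\rho_0<\min\{\theta,\rho_{\eps,\theta}(vx)\}$. Like the paper's argument, this yields $\rho_{\eps,\theta}(x)\ge\min\{\theta,\rho_{\eps,\theta}(vx)\}$. That gives the lemma when $\rho_{\eps,\theta}(x)<\theta$, which is the case used later.

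\textbf{Your fallback is false.} Fix $g=\exp(N)\in U_b^-$ with $N\ne 0$, and a small decomposable $w\in W_{d+1}$ with $Nw\ne 0$. A lattice $y$ in which $g^{-1}w$ is integral can have $\alpha^\theta_\eps(y)<\infty$ while $\alpha^\theta_\eps(gy)=\infty$. So no bound of the form $\alpha(gy)\le C_g\,\alpha(y)^M$ exists.
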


More generally, let $\Omega\subset P^-_b$ be a compact set. We define 
\begin{equation} \label{eq:rho-Omega}
\rho_{\eps,\theta,\Omega}(x)=\limsup_{t\to\infty} \frac{\sup_{\omega\in\Omega} \log\alpha^\theta_\eps(b_t\omega x)}{t}.
\end{equation}
Then $\rho_{\eps,\theta}(x)=\rho_{\eps,\theta,\{e\}}(x)$. We will prove the following uniform version of \Cref{lem:rhoPb-}. 

\begin{prop}
    \label{prop:bt-Omega}
    Suppose $\rho_{\eps,\theta}(x)\leq \theta$. Then
    $\rho_{\eps,\theta,\Omega}(x)=\rho_{\eps,\theta}(x)$.
\end{prop}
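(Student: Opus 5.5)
The plan is to prove both inequalities. The inequality $\rho_{\eps,\theta,\Omega}(x)\geq\rho_{\eps,\theta}(x)$ is immediate once $\Omega\ni e$; otherwise one can replace $\Omega$ by $\Omega\cup\{e\}$ and argue for that set. For a general $\Omega$, the lower bound will follow from the upper-bound argument applied to the point $\omega x$ with $\omega\in\Omega$, together with \Cref{lem:rhoPb-}, which gives $\rho_{\eps,\theta}(\omega x)=\rho_{\eps,\theta}(x)$. So the real content is the upper bound
\[
\limsup_{t\to\infty}\frac{1}{t}\sup_{\omega\in\Omega}\log\alpha^\theta_\eps(b_t\omega x)\leq\rho_{\eps,\theta}(x).
\]

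First, I will decompose elements of $P_b^-$ as $\omega=z\,v$ with $z\in Z_G(\{b_t\})=N_G(H_d)$ and $v$ in the unipotent radical $N_b^-$ contracted by $b_t$. Compactness of $\Omega$ gives compact sets $Z_\Omega$ and $N_\Omega$ containing the respective factors. Then
\[
b_t\omega x=z\,(b_tvb_{-t})\,b_tx,
\]
and $b_tvb_{-t}\to e$ exponentially fast, uniformly over $v\in N_\Omega$, at rate at least $e^{-\frac{1}{d+1}t}$. The inverse is explicit from \eqref{eq:bt}: the weights differ by $\frac{n-d}{(d+1)(n+1)}+\frac{1}{n+1}=\frac{1}{d+1}$.

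Second, I will handle the compact factor $z$. Since the $\pi_i$ are $N_G(H_d)$-equivariant and $\pifix$ is preserved, applying $z\in Z_\Omega$ changes each $\norm{\pi_i(v)}$ and $\norm{\pifix(v)}$ by at most a bounded multiplicative constant. The threshold condition $\norm{\pifix(v)}<\eps^{\delta_k}$ is also only perturbed by a constant. To absorb this, I will compare $\alpha_\eps$ with $\alpha_{\eps'}$ for nearby $\eps'$, or simply note that $\varphi_\eps$ changes by a bounded factor outside a bounded region. Either way $\log\alpha$ shifts by $O(1)$, which is invisible after dividing by $t$.

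Third, and this is the main obstacle, I must control the perturbation $h_t:=b_tvb_{-t}$. It is close to $e$, but it does not lie in $H_d$, so it mixes the components $\pi_i$. In particular $\pi_0$ and $\pi_{d+1}$ can leak into $\pi_i$ with $1\leq i\leq d$ and vice versa. The estimate I aim for is $\norm{\pi_i(h_tw)-\pi_i(w)}\ll e^{-t/(d+1)}\norm{w}$ for every $b_tx$-integral decomposable $w$.

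A vector $w$ contributes a large value $\alpha(b_tx)\approx e^{\rho t}$ only when the norms $\norm{\pi_i(w)}$ are of size roughly $e^{-\rho t (d-i+1)/(\theta(d+1))}$. Here is where the hypothesis $\rho_{\eps,\theta}(x)\leq\theta$ enters. It should guarantee that the relevant $w$ have total norm at most about $e^{\rho t/\theta}\le e^{t}$ up to subexponential factors; I would try to prove this using Minkowski's theorem together with the bound on $\pifix$. With that norm bound, the perturbation $e^{-t/(d+1)}\norm{w}$ stays below the scale of the components $\norm{\pi_i(w)}$, up to a factor $e^{o(t)}$. This borderline comparison is exactly why the hypothesis is needed.

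Given that estimate, the argument closes as follows. Any $w$ that is integral for $h_tb_tx$ has the form $h_tw'$ with $w'$ integral for $b_tx$, so
\[
\varphi_\eps(h_tw')\leq e^{o(t)}\max\bigl(\varphi_\eps(w'),\,\text{bounded}\bigr).
\]
This uniformity presumably reproduces the argument of \Cref{lem:rhoPb-}, with the error bounds made uniform over $v\in N_\Omega$ and $z\in Z_\Omega$. Since all constants are uniform over the compact set $\Omega$, taking $\sup_{\omega\in\Omega}$ and then $\limsup_{t\to\infty}$ gives the upper bound, and hence equality.
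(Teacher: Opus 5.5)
Your skeleton is sound: split $\omega$ into an $N_G(H_d)$-part and a $b_t$-contracted part, and treat $b_t\omega b_{-t}$ as a bounded perturbation. The gap is that neither quantitative step that carries the proof works as stated.

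First, the ungraded bound $\norm{\pi_i(h_tw)-\pi_i(w)}\ll e^{-t/(d+1)}\norm{w}$ cannot give the comparison you need. A vector realizing $\varphi_\eps>1$ has every component $\pi_0,\dots,\pi_{d+1}$ bounded by $\eps^{\delta_k}$, so $\norm{w}\ll 1$ automatically. No Minkowski argument is needed, and a bound like $e^{\rho t/\theta}$ would make your error term grow. Even with $\norm{w}\ll 1$, the error $e^{-t/(d+1)}$ must be compared with the scale $e^{-\frac{d}{d+1}\rho t/\theta}$ of $\pi_1(w)$. This fails once $\rho>\theta/d$, which the hypothesis allows when $d\geq 2$. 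The reason is that your bound charges the order-one component $\pi_{d+1}(w)$ to $\pi_1$ with factor $e^{-t/(d+1)}$. What makes $\rho\leq\theta$ the correct threshold is the grading:
\begin{itemize}
\item $h_t-e$ maps $W_l$ into $W_{l-1}\oplus\cdots\oplus W_0$, with $W_l\to W_j$ block of size $O(e^{-(l-j)t/(d+1)})$;
\item $\norm{\pi_l(w)}\leq\eps^{\delta_k}e^{-(1-\frac{l}{d+1})\rho t/\theta}$ for $1\leq l\leq d+1$;
\item hence the leakage from $W_l$ into $W_j$ is $\ll e^{-(1-\frac{j}{d+1})\rho t/\theta}\,e^{-\frac{l-j}{d+1}(1-\rho/\theta)t}$.
\end{itemize}
The paper implements exactly this. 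It flows back to $t_m'=(1-\theta^{-1}\rho_0)t_m\geq 0$, which is where $\rho_0<\theta$ enters, because there the whole vector is uniformly small. It then applies the bounded operator $b_{t_m'}\omega_m^{-1}b_{-t_m'}$ and flows forward again.

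Second, $\varphi_\eps$ jumps at $\norm{\pifix(v)}=\eps^{\delta_k}$, so a bounded perturbation does not change $\varphi_\eps$ by a bounded factor. The factor $z\in N_G(H_d)$ rescales $\pi_0$ and $\pi_{d+1}$ by bounded amounts and can push $\pifix$ below the threshold, turning $\varphi_\eps=1$ into an exponentially large value. Passing to $\alpha_{\eps'}$ with $\eps'>\eps$ only yields $\rho_{\eps',\theta}(x)\geq\rho_0$. You would then need $\rho_{\eps',\theta}\leq\rho_{\eps,\theta}$, which is the nontrivial half of Lemma~\ref{rem:eps12}, and the paper derives that from this very proof.

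The missing idea is to shift time by $s_m=o(t_m)$ so that $\norm{\pifix}\to 0$. This requires decomposability of the vectors, via the Pl\"ucker inequality $\norm{\pi_{l+1}}\norm{\pi_0}\leq k\norm{\pi_l}\norm{\pi_1}$. Without it, $\pi_0$ and $\pi_{d+1}$ could both have size comparable to $\eps^{\delta_k}$ while $\pi_1,\dots,\pi_d$ are tiny. Since $b_s$ scales $W_0$ and $W_{d+1}$ in opposite directions, no time shift would then bring $\pifix$ below the threshold. The same inequality drives the paper's Case~2, where $\pi_0$ is not small and the flow-back time must be chosen differently. Your proposal never uses decomposability, so it does not close.

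A smaller point: Lemma~\ref{lem:rhoPb-} is presented as a consequence of this proposition. For the reverse inequality you should derive it from your upper bound applied to singletons rather than cite it.
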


These results are needed only in \Cref{sec:obstruction}, so the proof may be skipped in the first reading.

\begin{proof}
It is enough to prove that $\rho_{\eps,\theta,\Omega}(x)\leq \rho_{\eps,\theta}(x)$.  So, we assume $\rho_{\eps,\theta,\Omega}(x)>0$ and let $0<\rho_0<\min\{\theta,\rho_{\eps,\theta,\Omega}(x)\}$. So it is enough to prove that $\rho_{\eps,\theta}(x)\geq \rho_0$.

   The arguments given here are based on \cite[Section 4]{SY24TAMS}. Write $x=gx_0$ for some $g\in G$. Then, there exist sequences $t_m\to\infty$, and $\{\omega_m\}\subset\Omega$,  such that 
   \[
   \rho_0<\rho_{\eps,\theta,\Omega}(x)=\lim_{m\to\infty} \frac{\log\alpha_{\eps}^\theta(b_{t_m}\omega_mx)}{t_m}.
   \]
   In view of the definitions \eqref{eq:def_phi_eps} of $\varphi_\eps$ and \eqref{eq:alpha_eps^theta} of $\alpha_\eps^\theta$, after passing to a subsequence, there exists a $1\leq k\leq n$, and a sequence of nonzero decomposable $k$-vectors $v_m\in\bigwedge^k\Z^{n+1}$ such that 
   \[
  \alpha_{\eps}^\theta(b_{t_m}\omega_m x)=(\varphi_\eps(b_{t_m}\omega_m g v_m))^{\theta}.
   \]
   
   We recall that $\pi_l:\bigwedge^k\R^{n+1}\to W_l\defeq \bigwedge^l V_0^\perp\oplus\bigwedge^{k-l}V_0$ for all $0\leq l\leq d+1$, see \eqref{eq:pi-i}. Since $\rho_0>0$, by \eqref{eq:def_phi_eps}, 
   \begin{equation}
       \label{eq:fix}
       \norm{\pifix(b_{t_m}\omega_m gv_m)}<\eps^{\delta_k},
   \end{equation}
   where $\pifix=\pi_0+\pi_{d+1}$ and $\delta_k=(n+1-k)k$, and
   \[
   e^{\theta^{-1}\rho_0t_m}\leq \varphi_\eps(b_{t_m}\omega_m g v_m)=\min_{1\leq l\leq d} \eps^{\frac{d+1}{d+1-l}\delta_k}\norm{\pi_l(b_{t_m}\omega_m g v_m)}^{-\frac{d+1}{d+1-l}},
   \]
   equivalently,
   \begin{equation}
       \label{eq:pi-l}
        \norm{\pi_l(b_{t_m}\omega_m gv_m)}\leq \eps^{\delta_k} e^{-(1-\frac{l}{d+1})\theta^{-1}\rho_0t_m},
        \quad \forall 1\leq l\leq d.
    \end{equation}

Since $\Omega\subset P_b^-$, we have
\begin{equation} \label{eq:C1}
    C\defeq \sup_{\omega\in\Omega}\sup_{t\geq 0}\norm{b_t\omega^{-1}b_t^{-1}}<\infty.
\end{equation}

We note that by \eqref{eq:bt}, $b_t$ acts on $W_l$, the image of $\pi_l$, as a scalar multiplication by 
\begin{equation} \label{eq:bt-action}
e^{\bigl(\frac{l}{d+1}-\frac{k}{n+1}\bigr)t}, \, \forall 0\leq l\leq d+1.
\end{equation}
In particular, 
\begin{equation} \label{eq:B-Wl}
     \omega W_l\subset W_l\oplus \cdots \oplus W_0 \text{, $\forall \omega\in P^-_b$}.
\end{equation}

    For each $m\in\N$, let 
    \[
    E_m\defeq \norm{\pi_0(b_{t_m}\omega_m gv_m)}.
    \]
    Then by passing to a subsequence, we may assume that 
    \begin{align} \label{eq:Em-case1}
        \text{either } & E_m\leq \eps^{\delta_k} e^{-\theta^{-1}\rho_0t_m}\text{, for all $m\in\N$}\\
        \text{or } & E_m>\eps^{\delta_k} e^{-\theta^{-1}\rho_0t_m} \text{, for all $m\in\N$.} \label{eq:Em-case2}
    \end{align}
    
    \subsubsection*{Case 1} Suppose that \eqref{eq:Em-case1} holds. Let $m\in \N$. Let $t_m'=t_m-\theta^{-1}\rho_0t_m$. Then, by \eqref{eq:pi-l} and \eqref{eq:bt-action}, we get
    \[
    \norm{\pi_l(b_{t_m'}\omega_m gv_m)}\leq \eps^{\delta_k} e^{-\bigl(1-\frac{k}{n+1}\bigr)\theta^{-1}\rho_0t_m},\quad \forall 0\leq l\leq d+1. 
    \]
    Hence,
    \begin{equation} \label{eq:all-same-tmprime}
    \norm{b_{t_m'}\omega_m gv_m}\leq \eps^{\delta_k} e^{-\bigl(1-\frac{k}{n+1}\bigr)\theta^{-1}\rho_0t_m}.
    \end{equation}
    As $\theta^{-1}\rho_0<1$, we have $t_m'\geq 0$. Hence by \eqref{eq:C1} applied to $\bigwedge^k\R^{n+1}$, we get
    \[
    \norm{b_{t_m'} gv_m}=\norm{(b_{t_m'}\omega_m^{-1}b_{t_m'}^{-1})b_{t_m'}\omega_m gv_m}\leq C^k\eps^{\delta_k} e^{-(1-\frac{k}{n+1})\theta^{-1}\rho_0t_m}.
    \]
    Now going back from $t_m'$ to $t_m=t_m'+\theta^{-1}\rho_0t_m$, in view of \eqref{eq:bt-action}, we get
    \begin{align*}
       \norm{\pi_0(b_{t_m}gv_m)}&\leq C^k\eps^{\delta_k}e^{-\theta^{-1}\rho_0t_m},\\
       \norm{\pi_{d+1}(b_{t_m}gv_m)}&\leq C^k\eps^{\delta_k},\\ 
        \norm{\pi_l(b_{t_m}gv_m)}&\leq C^k\eps^{\delta_k} e^{-(1-\frac{l}{d+1})\theta^{-1}\rho_0t_m}\text{, $\forall 1\leq l\leq d$.}
    \end{align*}

    Pick $s_m=-\sqrt{t_m}$ and $t_m''=t_m+s_m$. Then,
    \begin{align*} 
       \norm{\pi_0(b_{t_m''}gv_m)}&\leq C^k\eps^{\delta_k}e^{-\frac{k}{n+1}s_m}e^{-\theta^{-1}\rho_0t_m}\to 0\text{, as } s_m/t_m\to 0,\\
       \norm{\pi_{d+1}(b_{t_m''}gv_m)}&\leq C^ke^{(1-\frac{k}{n+1})s_m}\eps^{\delta_k}\to 0 \text{, as }s_m\to-\infty, \\ 
        \norm{\pi_l(b_{t_m''}gv_m)}&\leq (C^ke^{(\frac{l}{d+1}-\frac{k}{n+1})s_m})\eps^{\delta_k} e^{-\frac{d+1-l}{d+1}\theta^{-1}\rho_0t_m}\text{, $\forall 1\leq l\leq d$}.
    \end{align*}
So,
    \begin{equation}
        \label{eq:pifix1}
        \lim_{m\to\infty} \norm{\pifix(b_{t_m''}gv_m)}=0.  
    \end{equation}
Therefore, for all $m\gg 1$, we have
\[
\varphi_{\eps}^\theta(b_{t_m''}gv_m)=\min_{1\leq l\leq d+1} \eps^{\frac{d+1}{d+1-l}\delta_k}\norm{\pi_l(b_{t_m''}gv_m)}^{-\frac{d+1}{d+1-l}}\geq Ce^{cs_m}e^{\rho_0t_m},
\]
for some $C>0$ and $c\in\R$ independent of $m$. So, since $\lim_{m\to\infty} s_m/t_m\to 0$, we get
    \[
    \rho_{\eps,\theta}(x)\geq \limsup_{m\to\infty} \frac{\log\varphi_{\eps}^\theta(b_{t_m''}gv_m)}{t_m''}\geq \rho_0.
    \]
   This completes the proof in Case~1.
   
    \subsubsection*{Case 2} Suppose that \eqref{eq:Em-case2} holds. This is the most interesting case. We note that in this case $k\leq n-d$, because if $k>n-d$, then the image of $\pi_0$ is $\{0\}$. 

    Let $m\in\N$. Since $v_m$ is a decomposable vector, using Pl\"ucker relations as in \cite[(4.16)]{SY24TAMS}, we get the following: For all $1\leq l\leq d$,
\begin{equation}
    \norm{\pi_{l+1}(b_{t_m}\omega_mgv_m)}\norm{\pi_0(t_m\omega_mgv_m)}\leq k\norm{\pi_l(b_{t_m}\omega_mgv_m)}\norm{\pi_1(b_{t_m}\omega_mgv_m)}.
\end{equation}
    Inductively, for all $1\leq l\leq d+1$, we get
    \begin{align}
    \norm{\pi_{l}(b_{t_m}\omega_m gv_m)}&\leq k^{l-1}E_m^{-(l-1)}\norm{\pi_1(b_{t_m}\omega_mgv_m)}^{l} \notag \\
    &\leq k^{l-1}E_m^{-(l-1)}\eps^{l\delta_k} e^{-\frac{ld}{d+1}\theta^{-1}\rho_0t_m}.
    \label{eq:plucker-consequence}
    \end{align}
    
    Let $t_m'=t_m+(d\theta^{-1}\rho_0t_m+(d+1)\log E_m)$.
    So
    for all $0\leq l\leq d+1$,  by  we get
    \begin{align}
    \norm{\pi_l(b_{t_m'}\omega_m gv_m)}
    &\leq k^d\eps^{l\delta_k}E_m^{\bigl(1-\frac{k(d+1)}{n+1}\bigr)}e^{-\frac{kd}{n+1}\theta^{-1}\rho_0t_m}.
    \end{align}

    Since $\log E_m\geq -\theta^{-1}\rho_0t_m$ and $\theta^{-1}\rho_0<1$, we have $t_m'\geq 0$. Therefore, by \eqref{eq:C1} and \eqref{eq:B-Wl}, assuming $0<\eps<1$, for any $0\leq l\leq d+1$,
\begin{align*}
\norm{\pi_l(b_{t_m'}gv_m)}
&\leq \sup_{j\geq l} C^k\norm{\pi_j(b_{t_m'}\omega_mgv_m)}\\
&\leq C^k k^d \eps^{l\delta_k} E_m^{\bigl(1-\frac{k(d+1)}{n+1}\bigr)}e^{-\frac{kd}{n+1}\theta^{-1}\rho_0t_m}.
\end{align*}
Therefore, going back from $t_m'$ to $t_m$ using \eqref{eq:bt-action}, we get for all $0\leq l\leq d+1$,
\begin{align*}
\norm{\pi_l(b_{t_m}gv_m)}
&\leq C^kk^d\eps^{l\delta_k} E_m^{-(l-1)}e^{-\frac{ld}{d+1}\theta^{-1}\rho_0t_m}.
\end{align*}
Now by \eqref{eq:fix}, we have $E_m<\eps^{\delta_k}$. And by \eqref{eq:Em-case2},  $E_m^{-(l-1)}\leq \eps^{-(l-1)\delta_k}e^{(l-1)\theta^{-1}\rho_0t_m}$, and $(l-1)-\frac{ld}{d+1}=-(1-\frac{l}{d+1})$ for $1\leq l\leq d+1$. Therefore,
\begin{equation}  \label{eq:each-l-cases}
\norm{\pi_l(b_{t_m}gv_m)}\leq 
\begin{cases} 
 C^kk^d E_m &\text{ if $l=0$,}\\
 C^kk^d\eps^{(d+1)\delta_k} (E_m^{-1}e^{-\theta^{-1}\rho_0t_m})^d\leq C^k k^d\eps^{\delta_k} &\text{ if $l=d+1$}\\
C^kk^d\eps^{\delta_k} e^{-(1-\frac{l}{d+1})\theta^{-1}\rho_0t_m} & \text{ if $1\leq l\leq d$.}
\end{cases}
\end{equation}

\subsubsection*{Claim.} After passing to a subsequence of $t_m$, there exists $s_m\in\R$ such that $s_m/t_m\to0$, and if we put $t_m''=t_m+s_m$, then 
\begin{equation}
\label{eq:pifix2}
    \lim_{m\to\infty} \norm{\pifix(b_{t_m''}gv_m)}=0. 
\end{equation}

To prove this claim, without loss of generality, after passing to a subsequence we may assume that 
\begin{align}
   \text{either } &\inf_{m\in\N} \norm{\pi_0(b_{t_m}gv_m)}>0,  
    \label{eq:p0-case}\\
   \text{or } &\inf_{m\in\N} \norm{\pi_{d+1}(b_{t_m}gv_m)}>0,
   \label{eq:pd+1-case}
\end{align}
indeed if each infimum is $0$, then choose $s_m=0$. 

First suppose that \eqref{eq:p0-case} holds. Then, from \eqref{eq:each-l-cases} we obtain that $\inf_{m\in\N} E_m>0$. Therefore, $\lim_{m\to\infty}E_me^{\theta^{-1}\rho_0t_m}=\infty$. Let 
\[
s_m=\min\left\{\sqrt{t_m},\frac{d(n+1)}{2(n+1-k)}\log E_me^{\theta^{-1}\rho_0t_m}\right\}. 
\]
Then using \eqref{eq:bt-action} and \eqref{eq:each-l-cases}, we obtain \eqref{eq:pifix2}. 

Now suppose that \eqref{eq:pd+1-case} holds. Then, from \eqref{eq:each-l-cases} we deduce that 
\[
\inf_{m\in\N} E_m^{-1}e^{-\theta^{-1}\rho_0t_m}>0.
\]
Therefore, $\lim_{m\to\infty} E_m^{-1}=\infty$. Let
\[
s_m=-\min\left\{\frac{2(n+1)}{k}\log{E_m^{-1}},\sqrt{t_m}\right\}.
\]
Then \eqref{eq:pifix2} follows from \eqref{eq:bt-action} and \eqref{eq:each-l-cases}. This completes the proof of the Claim. 

Hence, by \eqref{eq:bt-action} and \eqref{eq:each-l-cases}, we get 
\begin{align}
\norm{\pifix(b_{t_m''}gv_m)}&\to 0 \text{ as $m\to\infty$, and } \label{eq:pifix3}\\
\norm{\pi_l(b_{t_m''}gv_m)}&\leq C^kk^d\eps^{\delta_k}\eps^{\delta_k} e^{(\frac{l}{d+1}-\frac{k}{n+1})s_m}e^{-\frac{d+1-l}{d+1}\theta^{-1}\rho_0t_m} \text{, } \forall 1\leq l\leq d. \label{eq:pi-l2}
\end{align}
 
Therefore,
\[
\varphi_{\eps}^\theta(b_{t_m''}gv_m)=\min_{1\leq l\leq d} \eps^{\frac{d+1}{d+1-l}\delta_k}\norm{\pi_l(b_{t_m''}gv_m)}^{-\frac{d+1}{d+1-l}}\geq Ce^{cs_m}e^{\rho_0t_m},
\]
for some $C>0$ and $c\geq 0$ which are independent of $m$. So, as $s_m/t_m\to 0$, we get
    \[
    \rho_{\eps,\theta}(x)\geq \limsup_{m\to\infty} \frac{\log\varphi_{\eps}^\theta(b_{t_m''}gv_m)}{t_m''}\geq \rho_0.
    \]
This completes the proof in Case~2.  
\end{proof}

The following consequence of the above proof is not used in our arguments, but clarifies whether $\eps$ plays any role in the value of $\rho_{\eps,\theta}(x)$. 

\begin{lem} \label{rem:eps12}
    Let $x\in X$, $\theta>0$, and $0<\eps_1\leq\eps$. 
    Then $\rho_{\eps_1,\theta}(x)\leq \rho_{\eps,\theta}(x)$.  
    
    Moreover, if $\rho_{\eps,\theta}(x)\leq\theta$, then $\rho_{\eps_1,\theta}(x)= \rho_{\eps,\theta}(x)$. 
\end{lem}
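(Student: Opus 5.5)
The first claim is a pointwise comparison. I will show that $\varphi_{\eps_1}(v)\leq\varphi_{\eps}(v)$ for every $v\in\bigwedge^k\R^{n+1}$, $1\le k\le n$; taking the maximum over integral decomposable vectors then gives $\alpha_{\eps_1}^\theta\leq\alpha_\eps^\theta$ on $X$, and the inequality of $\limsup$'s follows from \eqref{eq:definition of rho}. The comparison of $\varphi$'s goes by cases on the definition \eqref{eq:def_phi_eps}. If $\norm{\pifix(v)}\geq\eps_1^{\delta_k}$, then $\varphi_{\eps_1}(v)=1\leq\varphi_\eps(v)$, since $\varphi_\eps\geq1$ wherever it equals $1$; and when the first branch applies for $\eps$, the pointwise bound below handles it anyway. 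If $\norm{\pifix(v)}<\eps_1^{\delta_k}\leq\eps^{\delta_k}$, both use the first branch, and each term $\eps_1^{\frac{d+1}{d-i+1}\delta_k}\norm{\pi_i(v)}^{-\frac{d+1}{d-i+1}}$ is monotone increasing in $\eps_1$. The only subtlety is that the first branch for $\eps$ could be $<1$ when $\norm{\pifix(v)}\in[\eps_1^{\delta_k},\eps^{\delta_k})$. In that case, however, $\varphi_{\eps_1}(v)=1$, so I will simply compare with $\max(\varphi_\eps,1)$. Because $\alpha^\theta_\eps\ge 1$ is built with the maximum, this changes nothing; if needed I will note that $\alpha_\eps\geq1$ by the convention in \eqref{eq:alpha_eps^theta}.

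For the reverse inequality under $\rho_{\eps,\theta}(x)\leq\theta$, I will rerun the argument of \Cref{prop:bt-Omega} with $\Omega=\{e\}$. Fix $0<\rho_0<\rho_{\eps,\theta}(x)$; then $\rho_0<\theta$, and it suffices to show $\rho_{\eps_1,\theta}(x)\ge\rho_0$. Choose $t_m\to\infty$ and decomposable integral $v_m$ realizing $\alpha_\eps^\theta(b_{t_m}x)\geq e^{\rho_0t_m}$. Exactly as in that proof, we obtain \eqref{eq:fix} and \eqref{eq:pi-l} with $\omega_m=e$, split into Case~1 and Case~2 according to $E_m$, and produce times $t_m''=t_m+s_m$ with $s_m/t_m\to0$ satisfying \eqref{eq:pifix1} (resp.\ \eqref{eq:pifix3}): $\norm{\pifix(b_{t_m''}gv_m)}\to0$, together with bounds $\norm{\pi_l(b_{t_m''}gv_m)}\leq C'e^{cs_m}e^{-(1-\frac{l}{d+1})\theta^{-1}\rho_0t_m}$ for $1\leq l\leq d$.

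The key observation is that $\pifix\to0$ is a threshold-free statement. For $m\gg1$ it gives $\norm{\pifix(b_{t_m''}gv_m)}<\eps_1^{\delta_k}$, so $\varphi_{\eps_1}$ is evaluated in its first branch. That branch is bounded below by $\eps_1^{\frac{d+1}{d+1-l}\delta_k}$ times the reciprocal powers, so we get $\varphi^\theta_{\eps_1}(b_{t_m''}gv_m)\geq C''e^{c's_m}e^{\rho_0t_m}$. Here the constant $C''$ depends on $\eps_1$ and $\eps$, but not on $m$, and constants are harmless after dividing by $t_m''$. Hence $\rho_{\eps_1,\theta}(x)\geq\rho_0$. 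Letting $\rho_0\uparrow\rho_{\eps,\theta}(x)$ and combining with the first part gives equality.

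The main obstacle is checking that the Case~2 bookkeeping (the Plücker step and the Claim) never used $\eps$ except through harmless constants and the fact that $E_m<\eps^{\delta_k}$. I would verify this line by line rather than re-derive it, noting that $\eps$ enters only multiplicatively in \eqref{eq:each-l-cases}, and that the choice of $s_m$ depends only on $E_m$, $t_m$, $k$, $n$.
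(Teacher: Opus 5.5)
Your proposal is correct and follows the paper's proof. For the reverse inequality you, like the paper, rerun the proof of \Cref{prop:bt-Omega} with $\Omega=\{e\}$ to get times $t_m''=t_m+s_m$ with $s_m/t_m\to0$ and $\norm{\pifix(b_{t_m''}gv_m)}\to0$; then $\varphi_{\eps_1}$ is evaluated in its first branch and $\eps_1$ only contributes a multiplicative constant. For the first inequality you argue pointwise, and this is a slightly cleaner variant of the paper's argument along a maximizing sequence: your opening claim $\varphi_{\eps_1}\le\varphi_\eps$ is false as stated, but the corrected bound $\varphi_{\eps_1}(v)\le\max\{\varphi_\eps(v),1\}$ together with $\alpha_\eps\ge1$ suffices, and it gives $\alpha_{\eps_1}^\theta\le\alpha_\eps^\theta$ on all of $X$.
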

   
\begin{proof}
    We write $x=gx_0$ for some $g\in G$. To prove the first inequality, without loss of generality, we may assume that $\rho_{\eps_1,\theta}(x)>0$. Then there exist a sequence $t_m\to\infty$, $k\in \{1,\ldots,n\}$, and integral $k$-vectors $v_m$ for all $m$ such that 
    $\norm{\pifix(b_{t_m}gv_m)}<\eps_1^{\delta_k}$, 
    and
    \begin{align*}
    \rho_{\rho_{\eps_1,\theta}(x)}&=\lim_{m\to\infty}\frac{\log\varphi_{\eps_1}^\theta(b_{t_m}gv_m)}{t_m} \\
    &=\lim_{m\to\infty}\frac{\theta\log \min_{1\leq i\leq d} \eps_1^{\frac{d+1}{d-i+1}\delta_k}\norm{\pi_i(b_{t_m}gv_m)}^{-\frac{d+1}{d-i+1}}}{t_m}\\
    &=\lim_{m\to\infty}\frac{\theta\log \min_{1\leq i\leq d} \norm{\pi_i(b_{t_m}gv_m)}^{-\frac{d+1}{d-i+1}}}{t_m} \text{, as $t_m\to\infty,$}\\
    &=\lim_{m\to\infty}\frac{\theta\log \min_{1\leq i\leq d} \eps^{\frac{d+1}{d-i+1}\delta_k}\norm{\pi_i(b_{t_m}gv_m)}^{-\frac{d+1}{d-i+1}}}{t_m}\\
    &= \lim_{m\to\infty} \frac{\log \varphi_\eps^\theta(b_{t_m}gv_m)}{t_m}\leq \rho_{\eps,\theta}(x),
    \end{align*}
    where the last equality holds because $\norm{\pifix(b_{t_m}gv_m)}<\eps_1^{\delta_k}<\eps^{\delta_k}$. 
    
To prove the inequality in the opposite direction,  we further assume that $0<\rho_{\eps,\theta}(x)<\theta$. In the proof of \Cref{prop:bt-Omega}, in \eqref{eq:pifix1} for Case~1 and \eqref{eq:pifix3} for Case~2, we showed that the $\limsup$ in the definition of $\rho_{\eps,\theta}(gx_0)$ can be achieved along a sequence $t_m''\to\infty$ such that $\norm{\pifix(b_{t_m''}gv_m)}\to 0$, and
    \begin{align*}
    \rho_{\eps,\theta}(gx_0)&=\lim_{m\to\infty}\frac{\theta\log \min_{1\leq i\leq d} \eps^{\frac{d+1}{d-i+1}\delta_k}\norm{\pi_i(b_{t_m''}gv_m)}^{-\frac{d+1}{d-i+1}}}{t_m''}\\
    &=\lim_{m\to\infty}\frac{\theta\log \min_{1\leq i\leq d} \norm{\pi_i(b_{t_m''}gv_m)}^{-\frac{d+1}{d-i+1}}}{t_m''}\\
    &=\lim_{m\to\infty}\frac{\theta\log \min_{1\leq i\leq d} \norm{\pi_i(b_{t''_m}gv_m)}^{-\frac{d+1}{d-i+1}}}{t''_m} \text{, as $t_m''\to\infty$,}\\
    &=\lim_{m\to\infty} \frac{\theta\log\varphi_{\eps_1}(b_{t_m''}gv_m)}{t_m''}\leq \rho_{\eps_1,\theta}(x),
    \end{align*}
    where the last equality holds because $\norm{\pifix(b_{t_m''}gv_m)}<\eps_1^{\delta_k}$ for all large $m$. 
    
\end{proof}

\subsection{Properties of the height function and non-divergence}

We recall the following lemma from \cite[Lemma 3.4]{SY24TAMS}. Let $I$ denote the interval $[-1/2,1/2]$. 
    
	\begin{lem}\label{lem:contraction_alpha}
		Let $0<\theta<\frac{d}{d+1}$ and $\eps>0$. There exists $C\geq 1$ such that for any $t\geq0$ there exists $b\geq 0$ such that for any $x\in X$ we have
		\begin{equation}\label{eq:contraction hypothesis alphaEpsTheta}
		\int_{I^d}\alpha_\eps^{\theta}(c_tu(s)x)\de s\leq
		Ce^{-\theta t}\alpha_\eps^{\theta}(x)+b.
		\end{equation}
	\end{lem}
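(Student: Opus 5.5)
The plan is the Benoist--Quint/Eskin--Margulis--Mozes scheme: first a contraction estimate for a single integral decomposable vector under $c_tu(s)$, then a combination over all such vectors using Plücker-type (Minkowski) inequalities.

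\emph{Step 1 (single vector).} Since $c_t$ and $u(s)$ lie in $H_d\cong\SL_{d+1}(\R)$, they commute with the projections $\pi_i$. On each isotypic block $\bigwedge^i V_0^\perp\otimes\bigwedge^{k-i}V_0$ they act through $\bigwedge^i$ of the standard representation of $\SL_{d+1}(\R)$. The expanding horospherical subgroup of $c_t$ in $\SL_{d+1}(\R)$ is exactly $\{u(s)\}$. For a nonzero vector $w\in\bigwedge^i\R^{d+1}$ with $1\le i\le d$, the standard estimate (as in \cite{BQ12}, \cite{Shi20}) is
\[
\int_{I^d}\norm{c_tu(s)w}^{-\beta}\de s\le C e^{-\beta\lambda_i t}\norm{w}^{-\beta}.
\]
Here $\lambda_i=\frac{d+1-i}{d+1}$ is the highest weight exponent of $c_t$ on $\bigwedge^i$; it equals $1-\frac{i}{d+1}$ because $c_t$ has top eigenvalue $e^{\frac{d}{d+1}t}$ on $e_0$ and $e^{-\frac{1}{d+1}t}$ on the others. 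The estimate holds for $\beta<$ some threshold, which is $\frac{d+1}{d+1-i}\cdot\frac{d}{d+1}$ once we take the exponent $\beta=\frac{d+1}{d+1-i}\theta$. Thus with $\varphi_\eps^\theta$ built from $\norm{\pi_i(v)}^{-\frac{d+1}{d-i+1}\theta}$, each branch contracts at rate $e^{-\theta t}$. This is why the normalization in \eqref{eq:def_phi_eps} is chosen and why $\theta<\frac{d}{d+1}$ is required: we need integrability of $\norm{\cdot}^{-\beta}$ along the $d$-dimensional $u(s)$-orbit projected to the projective space. The minimum over $i$ is handled by bounding the integral of a min by the min of integrals, which is legitimate up to splitting $I^d$, or by using the convexity argument of \cite{Shi20}.

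\emph{Step 2 (the fixed part).} The vectors with $\norm{\pifix(v)}\ge\eps^{\delta_k}$ contribute at most $1$. The fixed components $\pi_0,\pi_{d+1}$ are unchanged by $c_tu(s)$, so the case split in \eqref{eq:def_phi_eps} is stable under the action. Hence $\varphi_\eps^\theta(c_tu(s)v)$ satisfies the one-vector inequality exactly or is trivially bounded by $1$.

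\emph{Step 3 (passing to $\alpha_\eps^\theta$; the main obstacle).} The difficulty is that $\alpha_\eps^\theta$ is a max over infinitely many lattice vectors. We need the Margulis-type inequality: for a fixed $t$, if $\alpha_\eps^\theta(x)$ is large, then the max is attained by a bounded number of vectors, up to a bounded error in $\alpha_\eps^\theta(c_tu(s)x)$. Following \cite[Lemma 3.4]{SY24TAMS} and \cite{BQ12}, we use the submodularity inequality $\varphi(v\wedge w)\varphi(v\cap w)\lesssim\varphi(v)\varphi(w)$ for decomposable integral vectors. This lets us replace $\alpha_\eps^\theta$ by a sum of functions $\sum_k\omega_k\max_{v\in\bigwedge^k}\varphi_\eps^\theta(v)$ with suitable weights satisfying the contraction property up to an additive constant $b=b(t)$. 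The choice $\delta_k=(n+1-k)k$ is designed to make the Plücker/submodularity exponents compatible with these weights. For bounded $t$, the Lipschitz property in \Cref{rem:alpha-eps} controls $\alpha_\eps^\theta(c_tu(s)x)\le C_t\alpha_\eps^\theta(x)$. So it suffices to prove the inequality for one large $t_0$ and then iterate, using that $c_t$ conjugates $u(s)$ to $u(e^ts)$ and that Lebesgue measure on $I^d$ decomposes into translates. Iteration yields the stated form with a uniform $C$ and a $t$-dependent $b$.
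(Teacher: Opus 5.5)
Your Steps~1 and~2 are sound. They give $\int_{I^d}\varphi_\eps^\theta(c_tu(s)v)\,\de s\le C_0e^{-\theta t}\varphi_\eps^\theta(v)$ with $C_0$ independent of $t$ and $v$; bounding the integral of the minimum over $i$ by the minimum of the integrals needs no splitting of $I^d$.

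The paper does not reprove this lemma; it quotes \cite[Lemma~3.4]{SY24TAMS}. The real content is your Step~3: passing from one vector to the maximum over all $x$-integral decomposable vectors, with a constant $C$ independent of $t$. You identify this as the main obstacle but only assert its outcome (``the max is attained by a bounded number of vectors''). Neither mechanism you offer yields a $t$-independent $C$.

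Iterating a one-step bound $\int_{I^d}\alpha_\eps^\theta(c_{t_0}u(s)x)\,\de s\le Ke^{-\theta t_0}\alpha_\eps^\theta(x)+b_0$ through the convolution measures (as in the proof of \Cref{lem:back to compact}) produces the factor $K^m e^{-\theta m t_0}$ at time $mt_0$. The rate $\theta$ is already the exact single-vector rate: it is attained when the relevant $\pi_i(v)$ lie in the highest weight space. Nothing in your argument gives $K\le 1$; even $C_0>1$. So iteration proves only the rate $\theta-t_0^{-1}\log K$, not the statement.

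Likewise, in an Eskin--Margulis--Mozes weighted sum $\sum_k\omega_k\max_{v\in\bigwedge^k}\varphi_\eps^\theta(v)$, the cross terms come with constants growing in $t$. The weights must therefore depend on $t$, and comparing the sum back to $\alpha_\eps^\theta$ costs a $t$-dependent factor. In any case that sum is not the function in the lemma. The uniformity matters here: \Cref{lem:back to compact} chooses $t_1$ large in terms of $C$.

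Two further statements in Step~3 are incorrect. First, the Minkowski-type inequality goes the other way. For covolumes one has $\norm{V\cap W}\,\norm{V+W}\le\norm{V}\,\norm{W}$, so for an inverse-norm function $\varphi(V)\varphi(W)\lesssim\varphi(V\cap W)\varphi(V+W)$. Your version would give $\norm{v}\,\norm{w}\lesssim\norm{v\wedge w}$ for independent integral vectors, which fails whenever $v$ and $w$ make a small angle. Second, $v\wedge w=0$ when $V\cap W\neq 0$, so it does not represent $V+W$.

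What is missing is an argument carried out directly at each fixed $t$, not by iteration. It must control $\int_{I^d}\max_v\varphi_\eps^\theta(c_tu(s)v)\,\de s$ by single-vector integrals with a loss independent of $t$, once $\alpha_\eps^\theta(x)$ exceeds a $t$-dependent threshold. Below that threshold, \Cref{lem:log Lipschitz} puts everything into $b(t)$. In the Benoist--Quint construction this is where the correctly oriented Minkowski inequality and the $\eps^{\delta_k}$ normalization are used. That step is what the cited lemma supplies and what your proposal leaves unproved.
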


    We also need the log Lipschitz property.
    \begin{lem}\label{lem:log Lipschitz}
        Let $\theta>0$. There exists $C_1\geq1$ such that for every $s\in I^d$ and $t\geq0$,
        \begin{equation} \label{eq:Lipschitz alpha}
            C_1^{-1}e^{-\theta t}\alpha_\eps^\theta(x)\leq\alpha_\eps^\theta(c_tu(s)x) \leq C_1e^{d\theta t}\alpha_\eps^\theta(x).
        \end{equation}
    \end{lem}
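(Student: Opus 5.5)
The plan is to reduce the statement to a pointwise comparison of the functions $\varphi_\eps$ on decomposable vectors, and then take the maximum over $y$-integral vectors. Put $g=c_tu(s)\in H_d$. Because $H_d$ preserves every summand in \eqref{eq:decomposition of exterior representations}, each $\pi_i$ is $H_d$-equivariant. Moreover, $H_d$ acts trivially on the fixed part, so $\pifix(gv)=\pifix(v)$ for every $v$. Hence the condition $\norm{\pifix(v)}<\eps^{\delta_k}$ that selects the branch in \eqref{eq:def_phi_eps} is unchanged when $v$ is replaced by $gv$. If that condition fails, then $\varphi_\eps(gv)=\varphi_\eps(v)=1$ and there is nothing to prove. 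The map $v\mapsto gv$ is a bijection between $x$-integral and $gx$-integral decomposable vectors. So it is enough to find $C_1$, independent of $v$, $s\in I^d$ and $t\geq 0$, such that
\[
C_1^{-1}e^{-t}\varphi_\eps(v)\le \varphi_\eps(gv)\le C_1e^{dt}\varphi_\eps(v)
\]
holds whenever $\norm{\pifix(v)}<\eps^{\delta_k}$. Raising this to the power $\theta$ and taking the maximum over $v$ then gives \eqref{eq:Lipschitz alpha}, with the constant $C_1^\theta$. Since $\theta>0$ is arbitrary, $C_1$ may depend on $\theta$.

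Next we bound the operator norm of $g$ on $\bigwedge^i V_0^\perp$, for $1\le i\le d$. The factor $u(s)$ ranges over the compact set $I^d$, so it and its inverse contribute only a bounded constant. The element $c_t$ acts on $V_0^\perp$ with eigenvalues $e^{\frac{d}{d+1}t}$ (multiplicity one) and $e^{-\frac1{d+1}t}$ (multiplicity $d$). On $\bigwedge^i V_0^\perp$ its largest eigenvalue is therefore $e^{(\frac{d}{d+1}-\frac{i-1}{d+1})t}=e^{\frac{d+1-i}{d+1}t}$ and its smallest is $e^{-\frac{i}{d+1}t}$. This gives
\[
c\,e^{-\frac{i}{d+1}t}\norm{\pi_i(v)}\le\norm{\pi_i(gv)}\le C e^{\frac{d+1-i}{d+1}t}\norm{\pi_i(v)},
\]
with the trivial $\bigwedge^{k-i}V_0$ factor contributing only a constant.

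Now raise these bounds to the power $-\frac{d+1}{d+1-i}$. For each $i$ the $i$-th term in the minimum changes by a factor lying in $\bigl[C'e^{-t},\,C'e^{\frac{i}{d+1-i}t}\bigr]$. The largest upper exponent occurs at $i=d$ and equals $d$, so every term satisfies $\frac{i}{d+1-i}\le d$. Taking the minimum over $i$ preserves these two-sided bounds. This yields the displayed comparison, with exponents $-t$ and $dt$, and hence the lemma.

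The only delicate point is the invariance of the branch condition, which rests on $\pifix$ commuting with $H_d$ and being fixed by it. Everything else is the eigenvalue bookkeeping on $\bigwedge^i V_0^\perp$.
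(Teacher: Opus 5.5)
Your proposal is correct and follows essentially the same route as the paper. You reduce to a vector-by-vector comparison of $\varphi_\eps$ using $\pifix(c_tu(s)v)=\pifix(v)$, bound $\norm{\pi_i(c_tu(s)v)}$ between $e^{-\frac{i}{d+1}t}\norm{\pi_i(v)}$ and $e^{\frac{d+1-i}{d+1}t}\norm{\pi_i(v)}$ from the eigenvalues of $c_t$ on $\bigwedge^i V_0^\perp$, raise to the power $-\frac{d+1}{d+1-i}$, and use $\frac{i}{d+1-i}\le d$. Your remarks on the invariance of the branch condition and on the bijection between $x$-integral and $c_tu(s)x$-integral vectors just spell out steps the paper leaves implicit.
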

    \begin{proof}
        By \eqref{eq:alpha_eps^theta}, it suffices to prove that for every $k$ and every $v\in\bigwedge^k\R^{n+1}$,
        \begin{equation}
            e^{-\theta t}\varphi_\eps^\theta(v)\ll\varphi_\eps^\theta(c_tu(s)v) \ll e^{d\theta t}\varphi_\eps^\theta(v),
        \end{equation}
        where the implied constants are absolute.
        By \eqref{eq:def_phi_eps}, since $\pifix(c_tu(s)v)=\pifix(v)$, it remains to check that for every $1\leq i\leq d$,
        \begin{equation} \label{eq:Lipschitz for vector}
            e^{-t}\norm{\pi_i(v)}^{-\frac{d+1}{d-i+1}}\ll\norm{\pi_i(c_tu(s)v)}^{-\frac{d+1}{d-i+1}} \ll e^{dt}\norm{\pi_i(v)}^{-\frac{d+1}{d-i+1}},
        \end{equation}
        where the implied constants are absolute. 

        We now check \eqref{eq:Lipschitz for vector}. For each $1\leq i\leq d$, since $\pi_i$ is $H_d$-equivariant, in view of \eqref{eq:ct} and \eqref{eq:pi-i}, we have
        \begin{equation} \label{eq:Lipschitz for vector 1}
            e^{-\frac{i}{d+1}t}\norm{\pi_i(v)}\ll\norm{\pi_i(c_tu(s)v)} \ll e^{\frac{d-(i-1)}{d+1}t}\norm{\pi_i(v)}.
        \end{equation}
        Taking the $(-\frac{d+1}{d-i+1})$-th power, we get
        \begin{equation} \label{eq:Lipschitz for vector 2}
            e^{-t}\norm{\pi_i(v)}^{-\frac{d+1}{d-i+1}}\ll\norm{\pi_i(c_tu(s)v)}^{-\frac{d+1}{d-i+1}} \ll e^{\frac{i}{d-i+1}t}\norm{\pi_i(v)}^{-\frac{d+1}{d-i+1}}.
        \end{equation}
        Since $\frac{i}{d-i+1}\leq d$ for every $1\leq i\leq d$, \eqref{eq:Lipschitz for vector} follows from \eqref{eq:Lipschitz for vector 2}.
    \end{proof}

  Fix
    \begin{equation} \label{eq:theta}
    0<\delta<\theta<\frac{d}{d+1}. 
    \end{equation}
    Let $t_0\geq 0$. For any $x\in X$, we define
	\begin{equation} \label{eq:definition_alpha_tilde}
	\at (x)=\int_{t_0}^{\infty}e^{-\delta t}\alpha_\eps^\theta(b_{t}x)\de t.
	\end{equation}
In the rest of the section, we write $\alpha=\alpha_\eps^\theta$ to simplify notation.
	
	\begin{lem} \label{lem:properties_of_alpha'}
		\begin{enumerate}
			\item \label{itm:property1_of_alpha'} $\at \colon X\to[1,+\infty]$ is a lower semicontinuous function. 
           
			\item \label{itm:contraction-alpha}
            There exists $C\geq1$ such that for every $x\in X$ and every $t_1\geq 0$, there exists $b'(t_1)>0$ such that
			\begin{equation} \label{eq:contraction_alpha'}
			\int_{I^d}\at (a_{t_1}u(s)x)\de s\leq Ce^{-(\theta-\delta)t_1}\at (x)+b'(t_1).
			\end{equation}
			\item \label{itm:alpha-Lipschitz}
            $\at $ is Lipschitz with respect to the action of the semigroup $\{b_t\}_{t\geq0}H_d$. More precisely, for every compact subset $\mathcal{O}$ of $\{b_t\}_{t\geq0}H_d$, there exists $C\geq1$ such that
			\begin{equation*}
			\at (gx)\leq C\at (x),\quad\forall g\in\mathcal{O},\forall x\in X.
			\end{equation*}
			\item \label{itm:alpha-proper} 
            For any $M>0$, $\at ^{-1}([0,M])$ is compact.
             \item \label{itm:alpha-finite} 
             Suppose that  $\rho_{\eps,\theta}(x)<\delta$ and 
             \begin{equation}
             \label{eq:t0}
             \sup_{t\geq t_0} \frac{\log\alpha_{\eps,\theta}(b_tx)}{t}<\infty.
             \end{equation} Then $\tilde\alpha(x)<\infty$.
		\end{enumerate}
	\end{lem}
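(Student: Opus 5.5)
I will prove the five items in order. The main tool is the decomposition $a_t=b_tc_t$, where $\{b_t\}$ commutes with $H_d\ni c_t,u(s)$. Throughout, write $\alpha=\alpha_\eps^\theta$.

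\emph{Items (1) and (4).} For (1), each $\varphi_\eps^\theta(gv)$ depends continuously on $g$. Hence $\alpha$, a maximum over integral decomposable vectors, is lower semicontinuous, with values in $[1,\infty]$. By Fatou's lemma, $\at$ is then lower semicontinuous as well. It is at least $1$ once we normalize the weight, for instance by taking $t_0$ so that $\int_{t_0}^\infty e^{-\delta t}\,\de t\geq1$, or by rescaling; otherwise we only claim $\at\geq\int_{t_0}^\infty e^{-\delta t}\,\de t>0$.

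For (4), suppose $\at(x)\le M$. Lemma \ref{lem:log Lipschitz}-type bounds for $b_t$, $t\in[t_0,t_0+1]$, are easy: $b_t$ acts on each $W_l$ by a bounded scalar. So $\alpha(b_tx)\ll\alpha(b_{t_0}x)$ for such $t$, up to uniform constants. This gives $\alpha(b_{t_0}x)\ll M$. Now $\alpha_\eps$ is large whenever $x$ has a short primitive integral vector, since a short vector $v$ makes $\pifix(v)$ and every $\pi_i(v)$ small. Hence a bound on $\alpha$ gives a lower bound on the shortest vector of $b_{t_0}x$. Mahler's criterion then gives compactness of $b_{t_0}^{-1}$ applied to a compact set, and closedness follows from (1).

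\emph{Item (3).} Since $b_t$ commutes with $H_d$, we have $\alpha(b_t hx)\le C_{\mathcal O}\alpha(b_tx)$ for $h$ in a compact subset of $H_d$, by the Lipschitz property in Remark \ref{rem:alpha-eps}. For $g=b_{t'}h$ with $t'\in[0,T]$, we substitute:
\[
\at(b_{t'}y)=e^{\delta t'}\int_{t_0+t'}^\infty e^{-\delta t}\alpha(b_ty)\,\de t\le e^{\delta T}\at(y).
\]

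\emph{Item (2)} is the heart of the argument. We write
\[
\int_{I^d}\at(a_{t_1}u(s)x)\,\de s=\int_{t_0}^\infty e^{-\delta t}\int_{I^d}\alpha\bigl(c_{t_1}u(s)\,b_{t+t_1}x\bigr)\,\de s\,\de t,
\]
using Fubini and the commutation of $b$ with $c_{t_1}u(s)$. Applying Lemma \ref{lem:contraction_alpha} to the point $b_{t+t_1}x$ bounds the inner integral by $Ce^{-\theta t_1}\alpha(b_{t+t_1}x)+b$. Substituting $t\mapsto t-t_1$ in the first term gives
\[
Ce^{-\theta t_1}e^{\delta t_1}\int_{t_0+t_1}^\infty e^{-\delta t}\alpha(b_tx)\,\de t\le Ce^{-(\theta-\delta)t_1}\at(x).
\]
The second term contributes $b\int_{t_0}^\infty e^{-\delta t}\,\de t=:b'(t_1)$. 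This proves \eqref{eq:contraction_alpha'}.

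\emph{Item (5).} Since $\rho_{\eps,\theta}(x)<\delta$, we can pick $\rho'$ with $\rho_{\eps,\theta}(x)<\rho'<\delta$. Then $\alpha(b_tx)\le e^{\rho' t}$ for all $t\ge T_1$. On $[t_0,T_1]$, hypothesis \eqref{eq:t0} gives $\alpha(b_tx)\le e^{Kt}$ for some finite $K$, which is bounded there. Hence the integrand is bounded by $e^{Kt}e^{-\delta t}$ on the compact piece and by $e^{-(\delta-\rho')t}$ on the tail, and both pieces are integrable.

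The step I expect to need the most care is (4). There I must check precisely that boundedness of $\alpha_\eps$ rules out short vectors. Specifically, for a short primitive $v$ of norm $\eta$, either $\norm{\pifix(v)}<\eps^{\delta_k}$ holds, so $\varphi_\eps\gg\eta^{-1}$, or it fails. The second alternative is impossible for small $\eta$. This uses the $k=1$ case of the definition.
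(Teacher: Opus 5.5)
The one step that fails is in item~(4); items~(2), (3) and (5) are the paper's own arguments, and your caveat that in general only $\at\ge\delta^{-1}e^{-\delta t_0}$ holds, not $\at\ge1$, is accurate.

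\textbf{The false step in (4).} You claim that for $t\in[t_0,t_0+1]$ the values $\alpha(b_tx)$ and $\alpha(b_{t_0}x)$ are comparable ``because $b_t$ acts on each $W_l$ by a bounded scalar.'' This is false in both directions. (The direction you wrote, $\alpha(b_tx)\ll\alpha(b_{t_0}x)$, is also not the one needed to deduce $\alpha(b_{t_0}x)\ll M$.)

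The cause is the threshold in the definition of $\varphi_\eps$. When $\norm{\pifix(v)}$ drops below $\eps^{\delta_k}$, $\varphi_\eps(v)$ jumps from $1$ to $\min_i\eps^{\frac{d+1}{d+1-i}\delta_k}\norm{\pi_i(v)}^{-\frac{d+1}{d+1-i}}$. Since $b_t$ rescales $\pi_0$ and $\pi_{d+1}$, even a bounded $b_t$ can push a vector across the threshold.

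For example, take $n=2$, $d=1$, small $\eps$, and the lattice $x=\lambda\Z e_0\oplus\lambda\Z e_1\oplus\lambda^{-2}\Z e_2$.
\begin{itemize}
\item If $\lambda^{-2}=e^{1/6}\eps^2$, one checks $\alpha(x)=1$. But $\alpha(b_1x)=\infty$, because $\lambda^{-2}e^{-1/3}e_2\in V_0$ is below threshold and has $\pi_1=0$.
\item If $\lambda^{2}=e^{-1/6}\eps^2$, then $\alpha(x)=\infty$ via $\lambda^2e_0\wedge e_1$. But $\alpha(b_1x)=1$.
\end{itemize}

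\textbf{A related slip in (1).} The same discontinuity makes your sentence ``$\varphi_\eps^\theta(gv)$ depends continuously on $g$'' false. What is true, and sufficient since suprema preserve lower semicontinuity, is that $\max\{\varphi_\eps,1\}$ is lower semicontinuous. Indeed, $\{\norm{\pifix(v)}<\eps^{\delta_k}\}$ is open, and off it the function equals its minimum value $1$. This is what the paper takes from \cite[Lemma 4.1]{Shi20}.

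\textbf{The repair}, which is the paper's route, is never to compare $\alpha$ at different times. If $\at(x)\le M$, then $\int_{t_0}^{t_0+1}e^{-\delta t}\alpha(b_tx)\,\de t\le M$. Hence some $t\in[t_0,t_0+1]$ has $\alpha(b_tx)\le e^{\delta(t_0+1)}M$. Therefore
$\at^{-1}([0,M])\subset\{b_{-t}:t_0\le t\le t_0+1\}\cdot\alpha^{-1}([1,e^{\delta(t_0+1)}M])$,
which is compact once $\alpha$ is proper. Closedness then follows from (1).

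Your $k=1$ short-vector argument is a valid proof of that properness. If $\norm{v}=\eta<\eps^{\delta_1}$, the vector is below threshold and $\varphi_\eps(v)\ge\eps^{\frac{d+1}{d}\delta_1}\eta^{-\frac{d+1}{d}}$. If you want to work at the fixed time $t_0$, transfer the lower bound on the shortest vector from $b_tx$ to $b_{t_0}x$ using $\norm{b_{t_0-t}}\ll1$. Unlike $\alpha$, shortest-vector length is stable under bounded perturbations.
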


	\subsubsection*{Proof of (\ref{itm:property1_of_alpha'})}
			Suppose there is a sequence $(x_i)_{i\in\N}$ in $X$ such that $x_i\to x$ and $\at (x_i)\leq M$ for each $i$. Since $\alpha$ is lower semicontinuous (see e.g. \cite[Lemma 4.1]{Shi20}), we have $\liminf_{i}\alpha(b_tx_i)\geq\alpha(b_tx)$ for every $t\geq t_0$. Hence by Fatou's lemma, we have
			\[
			\begin{split}
			\at (x)&=\int_{t_0}^{\infty}e^{-\delta t}\alpha(b_{t}x)\de t\\
			&\leq \int_{t_0}^\infty\liminf_i e^{-\delta t}\alpha(b_tx_i)\de t\\
			&\leq \liminf_i\int_{t_0}^\infty e^{-\delta t}\alpha(b_tx_i)\de t\\
			&=\liminf_i\at (x_i).
			\end{split}
			\]
			This proves that $\at $ is lower semicontinuous.

            \subsubsection*{Proof of (\ref{itm:contraction-alpha})}
			 Let $b'=\delta^{-1}b$. We have
			\begin{equation*}
			\begin{split}
			\int_{I^d}\at (a_{t_1}u(s)x)\,\de s &=\int_{I^d}\at(c_{t_1}u(s)b_{t_1}x)\,\de s \\
			&=\int_{I^d}\int_{t_0}^{\infty}e^{-\delta t}\alpha(b_{t}c_{t_1}u(s)b_{t_1}x)\,\de t\,\de s \\
			&=\int_{t_0}^{\infty}e^{-\delta t}\left(\int_{I^d}\alpha(c_{t_1}u(s)b_{t+t_1}x)\,\de s\right)\de t.
\end{split}
			\end{equation*}
            Therefore, by \eqref{eq:contraction hypothesis alphaEpsTheta}, 
            \begin{equation*}
			\begin{split}
            \int_{I^d}\at (a_{t_1}u(s)x)\,\de s 
            &\leq \int_{t_0}^{\infty}e^{-\delta t}(Ce^{-\theta t_1}\alpha(b_{t+t_1}x)+b)\,\de t\\
			&=Ce^{-\theta t_1}\int_{t_0+t_1}^\infty e^{\delta t_1}e^{-\delta t'}\alpha(b_{t'}x)\,
			\de t'+b',\quad t'=t+t_1\\
			&\leq Ce^{-(\theta-\delta)t_1}\int_{t_0}^\infty e^{-\delta t'}\alpha(b_{t'}x)\,\de t'+b'\\
			&=Ce^{-(\theta-\delta)t_1}\at (x)+b'.
			\end{split}
			\end{equation*}
			
		\subsubsection*{Proof of (\ref{itm:alpha-Lipschitz})} By \cite[Lemma 4.1]{Shi20}, for any compact subset $\mathcal{O}$ of $H_d$, there exists $C_1\geq1$ such that $\alpha(hx)\leq C_1\alpha(x)$ for every $h\in\mathcal{O}$ and $x\in X$. Since $b_t$ commutes with $H_d$, we have
			\[
			\begin{split}
			\at(hx) &=\int_{t_0}^{\infty}e^{-\delta t}\alpha(b_{t}hx)\de t\\
			&=\int_{t_0}^{\infty}e^{-\delta t}\alpha(hb_{t}x)\de t\\
			&\leq C_1\int_{t_0}^{\infty}e^{-\delta t}\alpha(b_{t}x)\de t\\
			&=C_1\at(x).
			\end{split}
			\]
			Now it suffices to show that for every $T>0$ there exists $C_2\geq1$ such that for every $0\leq t_1 \leq T$ we have $\at(b_{t_1}x)\leq C_2\at(x)$. Indeed, 
			\[
			\begin{split}
			\at(b_{t_1}x) &=\int_{t_0}^{\infty}e^{-\delta t}\alpha(b_{t}b_{t_1}x)\de t\\
			&=e^{\delta t_1}\int_{t_0+t_1}^\infty e^{-\delta t'}\alpha(b_{t'}x)\de t'\\
			&\leq e^{\delta T}\at(x).
			\end{split}
			\]
			\subsubsection*{Proof of (\ref{itm:alpha-proper})} Since $\at$ is lower semi-continuous by (\ref{itm:property1_of_alpha'}),  $\at^{-1}([0,M])$ is a closed subset of $X$. It follows from the construction of $\at$ that 
            \[
            \at(y)\geq \int_{t_0}^{t_0+1}e^{-\delta t}\alpha(b_ty)\de t \geq e^{-\delta}\min_{0\leq t\leq 1}\alpha(b_ty), \,\forall  y\in X.
            \]
            So, $\at^{-1}([0,M])\subset b([-1,0])\alpha^{-1}([1,e^{\delta}M])$. Since $\alpha^{-1}([1,e^{\delta}M])$ is compact in $X$, it follows that $\at^{-1}([0,M])$ is pre-compact in $X$.
            
            \subsubsection*{Proof of (\ref{itm:alpha-finite})} By the definition \eqref{eq:definition of rho} of $\rho\defeq\rho_{\eps,\theta}(x)$, for every $0<\eps'<\delta-\rho$, there exists $T\geq t_0$ such that for every $t\geq T$, we have $\alpha(b_tx)\leq e^{(\rho+\eps')t}$. By \eqref{eq:t0}, $\sup_{t\in[t_0,T]}\alpha_\eps^\theta(b_tx)<\infty$. Therefore,
            \[
            \at(x) = \int_{t_0}^{\infty}e^{-\delta t}\alpha(b_{t}x)\de t \leq \int_{t_0}^Te^{-\delta t}\alpha(b_tx)\de t+\int_T^\infty e^{(-\delta+\rho+\eps')t}\de t <\infty.
            \]

            This completes the proof of all parts of \Cref{lem:properties_of_alpha'}. \qed

\subsubsection{Obtaining non-divergence}
	Let $x\in X$. For any measurable subset $K$ of $X$ and $T>0$, let
	\begin{equation*}
	\mathcal{C}_K^T(s)= \frac{1}{T}\int_{0}^T\mathbf{1}_K(a_tu(s)x)\de t,
	\end{equation*}
	where $\mathbf{1}_K$ is the characteristic function of $K$.
	
	The following result is the analogue of \cite[Proposition 2.3]{Shi20}.
	
	\begin{prop} \label{prop:continuous_exponential_recurrence}
		Let $\theta<\frac{d}{d+1}$. Suppose that $\rho_{\eps,\theta}(x)<\theta$. Then for every $0<\epsilon<1$, there exist a compact subset $K$ of $X$, $0<a<1$ and $C\geq 1$ such that
		\begin{equation} \label{eq:continuous_exponential_recurrence}
		\abs{ \{ s\in I^d \colon \mathcal{C}_K^T(s)\leq 1-\epsilon \} } \leq Ca^T, \,\forall\, T>0.
		\end{equation}
	\end{prop}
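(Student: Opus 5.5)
We follow the strategy of \cite[Proposition~2.3]{Shi20}, using the modified height function $\at$ from \eqref{eq:definition_alpha_tilde} in place of a Margulis function for the full $a_t$-action.

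\textbf{Step 1: choice of parameters.} Since $\rho_{\eps,\theta}(x)<\theta$, fix $\delta$ with $\rho_{\eps,\theta}(x)<\delta<\theta$, so that \eqref{eq:theta} holds. Choose $t_0$ large enough that $\alpha(b_tx)\le e^{\delta' t}$ for all $t\ge t_0$, for some $\delta'<\delta$. Then \eqref{eq:t0} holds, and \Cref{lem:properties_of_alpha'}(\ref{itm:alpha-finite}) gives $\at(x)<\infty$.

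\textbf{Step 2: contraction over a fixed time step.} Fix a step $\tau>0$ large enough that $Ce^{-(\theta-\delta)\tau}\le 1/2$ in \eqref{eq:contraction_alpha'}. Write $T=N\tau$ and discretize: $a_{j\tau}u(s)x$ for $j=0,\dots,N$. Using $a_\tau u(s)=u(e^{\tau}s)a_\tau$, the map $s\mapsto e^{\tau}s$ tiles the cube $I^d$ by translates. Iterating \eqref{eq:contraction_alpha'} therefore yields an averaged bound along the discrete orbit. The standard way to do this is to consider the function
\[
F_N(s)=\prod_{j\in J}\at(a_{j\tau}u(s)x)^{\beta}
\]
for small $\beta>0$ and index sets $J\subset\{1,\dots,N\}$. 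One then proves by induction on $N$ that
\[
\int_{I^d}\sum_{|J|\ge \epsilon' N}\ \prod_{j\in J}\min\bigl(1,\at(a_{j\tau}u(s)x)/M\bigr)^{-1}\,\de s\ \le\ \text{(geometric in $N$)}.
\]
More concretely, following Shi's argument one shows the following: for $M$ large, the measure of the set of $s$ such that $\at(a_{j\tau}u(s)x)>M$ for at least $\epsilon N$ indices $j$ is $\le C'a^N$. This is a large-deviation (Chernoff-type) estimate. The contraction inequality, applied conditionally on the earlier steps, provides the required supermartingale-type bound. This is possible because the map $s\mapsto e^\tau s$ modulo the lattice of cubes makes the successive steps behave like a Markov process on translates. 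The translation invariance needed to reapply \eqref{eq:contraction_alpha'} at a translated point $u(s_0)a_{j\tau}u(s)x$ is available because the inequality holds for every $x\in X$.

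\textbf{Step 3: passing from discrete to continuous time.} For $t\in[j\tau,(j+1)\tau]$ we have $a_t=b_{t-j\tau}c_{t-j\tau}a_{j\tau}$ with $t-j\tau$ in a compact range. By \Cref{lem:properties_of_alpha'}(\ref{itm:alpha-Lipschitz}) (Lipschitz under the compact part of $\{b_t\}_{t\ge0}H_d$), $\at(a_tu(s)x)\le C_2\at(a_{j\tau}u(s)x)$. Set $K=\at^{-1}([0,C_2M])$, which is compact by \Cref{lem:properties_of_alpha'}(\ref{itm:alpha-proper}). Then $\mathcal C^T_K(s)\le1-\epsilon$ forces $\at(a_{j\tau}u(s)x)>M$ for at least $\epsilon N-1$ indices. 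The bound from Step 2 then gives \eqref{eq:continuous_exponential_recurrence} for $T\in\tau\N$, and the general case follows by adjusting constants.

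\textbf{Main obstacle.} The main obstacle is Step 2: carrying out the large-deviation induction so that the exponentially small bound holds uniformly. This requires controlling the initial value $\at(x)$, which is finite by Step 1, and handling the additive constant $b'(\tau)$. Both enter only through the choice of $M$ relative to $b'$ and $\at(x)$. If the product formulation proves awkward, I would instead adapt \cite[Proposition~2.3]{Shi20} verbatim with $\at$ in place of $\alpha$.
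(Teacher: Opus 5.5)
Your proposal is correct and is essentially the paper's proof: pick $\delta$ with $\rho_{\eps,\theta}(x)<\delta<\theta$ and $t_0$ so that $\widetilde{\alpha}(x)<\infty$, then run the exponential-recurrence argument of \cite[Lemmas~4.9--4.13]{Shi20} with $\widetilde{\alpha}$ in place of $\alpha$, using only the contraction, Lipschitz and properness properties from \Cref{lem:properties_of_alpha'}, which the paper does simply by citation. One small correction: the displayed sum-of-products inequality in your Step~2 cannot hold as written, because each factor $\min(1,\widetilde{\alpha}/M)^{-1}\geq 1$ and so the left side is at least the number of admissible $J$; your ``more concretely'' version is the right mechanism, namely bounding the measure of $\{s:\widetilde{\alpha}(a_{j\tau}u(s)x)>M\ \forall j\in J\}$ by a constant times $\widetilde{\alpha}(x)\kappa^{\abs{J}}$ via iterated contraction and the Lipschitz property along $U$, then summing over $\abs{J}\geq\epsilon N$.
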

	
	\begin{proof}
    In \eqref{eq:theta} we pick $\delta$ such that $\rho_{\eps,\theta}(x)<\delta<\theta$. Moreover, we can pick $t_0>0$ such that \eqref{eq:t0} holds. Hence $\tilde\alpha(x)<\infty$. 
    
		Note that Lemma 4.9, Lemma 4.10, Corollary 4.11, Lemma 4.12 and Lemma 4.13 in \cite{Shi20} still hold if we replace the function $\alpha$ there with the function $\at$ defined in \eqref{eq:definition_alpha_tilde}, thanks to \Cref{lem:properties_of_alpha'}. Hence the proposition follows from \cite[Lemma 4.12]{Shi20} and \cite[Lemma 4.13]{Shi20} with the above modification.
	\end{proof}
	
	\begin{proof}[Proof of \Cref{thm:main_nondivergence}]
		In view of \eqref{eq:basic identity}, it suffices to prove that for almost every $s\in I^d$, every weak-$\ast$ limit of $\{ \frac{1}{T}\int_{0}^{T}a_tu(s)\delta_{x_A}\de t \}_{T\to \infty}$ is a probability measure on $X$. Indeed, we can cover $\R^d$ by boxes of side length $1$ centered at integral points. The box centered at $\bfp\in\Z^d$ is parametrized by $\begin{psmallmatrix}
		    1&\bfp\\&1
		\end{psmallmatrix}A$, which has the same Diophantine exponent as $A$. 
		
		Let $0<\theta<\frac{d}{d+1}$. Since the Diophantine exponent $\omega(A)<n$, by \cite[Proposition 4.1]{SY24TAMS} we  have that $\rho_{\eps,\theta}(x_A)<\theta$. Let $0<\epsilon<1$ be given. By \Cref{prop:continuous_exponential_recurrence} there exist a compact subset $K$ of $X$, $a<1$, and $C\geq1$ such that \eqref{eq:continuous_exponential_recurrence} holds for all $T\in\N$. Hence by Borel-Cantelli lemma, we have $\liminf_{T\to\infty \text{ in } \N}\mathcal{C}_K^T(s)\geq 1-\epsilon$ holds for almost every $s\in I^d$. Consider the compact set $K_1=\{a_tK:t\in [0,1]\}$. Then, for almost every $s\in I^d$, any weak-$\ast$ limit $\nu$ of $\{\frac{1}{T}\int_0^Ta_tu(s)\delta_{x_A}\,dt\}_{T\to\infty}$ satisfies $\nu(K_1)\geq 1-\epsilon$.
	\end{proof}

    \begin{remark} \label{rem:alternate}
        Alternatively, using \Cref{lem:properties_of_alpha'}, one can obtain an analogue of \Cref{prop:continuous_exponential_recurrence} as done in \cite[Theorem~1.5]{KKLM17} or \cite[Proposition~4.8]{Kha20}, and then deduce \Cref{thm:main_nondivergence} as noted in \cite[Remark~2.1]{KKLM17} or \cite[Theorem~4.3(b)]{Kha20}.
    \end{remark}

	\section{Unipotent invariance and avoidance of singular submanifolds}
	\label{sec:Sing}
	\subsection{Homogeneity of ergodic invariant measures} \label{subsect: avoidance of singular sets}
	By \Cref{thm:main_nondivergence} and \cite[Proposition 2.2]{Shi20}, for almost every $s\in\R^d$, any weak-$\ast$ limit point, say $\nu_s$, of the family of probability measures 
    $\{\frac{1}{T}\int_{0}^{T}a_tu(s)\delta_{x_A}\de t\}_{T\to\infty}$ 
    is a probability measure on $X$ invariant under the unipotent subgroup $U=U_a^+\cap H_d$. We note that for the conclusion of \cite[Proposition 2.2]{Shi20} to hold, it is sufficient that $U\subset U_a^+$, but the $U$-fixed point subspace need not be {\it $a_1$-expanding\/} for finite-dimensional representations of $G$. Moreover, by construction, $\nu_s$ is invariant under the action of $S=\{a_t\}_{t\in\R}$. Let $F=SU$. 
	
	Using Ratner's description of ergodic invariant measures for unipotent flows~\cite{Ratner:measure}, in \cite{EinsiedlerShi19} it is shown that every ergodic invariant measure for the $F$-action on $X$ is homogeneous; here it is important that $S$ is simple, meaning $a_t$ has only one eigenvalue $>1$ and all others are $<1$ for all $t>0$. In fact, the arguments in \cite[Proof of Theorem~1.1, p.427]{EinsiedlerShi19} show that each $F$-invariant ergodic probability measure is concentrated on a closed orbit $gLx_0$ and it is invariant under $gHg^{-1}$, where $H=\bfH(\R)^0$, $L=N_G(H)^0=Z_G(H)^0H$, $\bfH$ is a semisimple algebraic subgroup of $\bfG$ defined over $\Q$ such that $Z_\bfG(\bfH)$ is a $\Q$-anisotropic torus, and $g\in G$ is such that $F\subset gLg^{-1}$. In particular, $Z_G(H)x_0$ is a compact orbit of a $\C$-diagonalizable subgroup of $G$. 
    
    For any proper closed subgroup $L$ of $G$ such that the orbit $Lx_0$ is closed, we define
	\begin{equation} \label{eq:NFL}
	N(F,L)\defeq \{g\in G\colon g^{-1}Fg\subset L\},
	\end{equation}
which is an algebraic variety, and we call the set $N(F,L)x_0$ a singular submanifold. We observe that 
\begin{equation} \label{eq:NFL-nor}
	N(F,L)=N_G(F)N(F,L)N_G(L) \text{ and } N(F,L)=N(F,L^0),
	\end{equation}
    the second equality holds because $F$ is connected. 
    
Being defined over $\Q$, there are only countably many possibilities for $\bfH$. Due to the $F$-ergodic decomposition of $\nu_s$, if $\nu_s$ is not $G$-invariant, then there exists a proper subgroup $\bfH$ of $\bfG$ as above such that 
$
\nu_s(N(F,N_G(H))x_0)>0
$.
Therefore, to prove \Cref{thm:main_equidistribution}, in view of \eqref{eq:equidistribution}, it is enough to prove the following.

\begin{prop}  \label{prop:goal}
Suppose that possibility~\ref{item:algebraic obstruction} of \Cref{thm:main_equidistribution} does not hold. Then for any proper semisimple $\bfQ$-subgroup $\bfH$ of $\bfG$ such that $Z_\bfG(\bfH)$ is a $\bfQ$-anisotropic torus, we have $\nu_s(N(F,N_G(H))x_0)=0$ for almost all $s$, where $\nu_s$ denotes any weak-$\ast$ limit point of $\{\frac{1}{T}\int_{0}^{T}a_tu(s)\delta_{x_A}\de t\}_{T\to\infty}$ in $\cP(X)$. 
\end{prop}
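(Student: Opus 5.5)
We will follow the linearization technique (Dani--Margulis, as adapted in \cite{Shi20} and \cite{SY24TAMS}), but with a height function adapted to the singular set instead of only to the cusp.

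\textbf{Setup.} Fix $\bfH$ and put $L=N_G(H)$. Let $V_H=\bigwedge^{\dim H}\fg$, and let $p_H\in V_H$ be a primitive integral vector representing $\Lie(H)$. By construction, $\Gamma p_H$ is discrete. The set $N(F,L)$ is mapped by $g\mapsto gp_H$ into the algebraic subvariety
\[
\cN=\{v\in V_H : F\text{ fixes } v \text{ up to the character of } L\},
\]
which we may take to be the linear subspace of $F$-fixed vectors after replacing $V_H$ by the appropriate twist.

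The aim is to build a function $\beta_\cN\colon X\to[1,\infty]$ that blows up exactly near $N(F,L)x_0$. Concretely, for $y=g\Gamma$, let $\beta_\cN(y)$ be a maximum over $\gamma p_H$ of $\varphi(g\gamma p_H)$, where $\varphi$ is built from the norms of the components of $g\gamma p_H$ transverse to the $U$-fixed, $a$-noncontracted part. This mimics \eqref{eq:def_phi_eps}, with $V_H$ decomposed into $H_d$-isotypic pieces via the $N_G(H_d)$-equivariant projections. The required properties are:
\begin{enumerate}
\item a contraction inequality for $c_t u(s)$, analogous to \Cref{lem:contraction_alpha}, coming from the $H_d$-representation theory (the averaging over $U$ of a vector with a nontrivial $H_d$ component grows);
\item a log-Lipschitz bound, as in \Cref{lem:log Lipschitz};
\item properness near the singular set: if $y_i\to N(F,L)x_0\cap K$ with $K$ compact, then $\beta_\cN(y_i)\to\infty$;
\item local finiteness: only finitely many $\gamma p_H$ contribute on compacta, using that $Z_\bfG(\bfH)$ is $\Q$-anisotropic, so there are no degenerate cusp-like families of $\gamma p_H$.
\end{enumerate}

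\textbf{Averaging along $b_t$.} As in \eqref{eq:definition_alpha_tilde}, we set
\[
\btt(x)=\int_{t_0}^\infty e^{-\delta t}\beta_\cN(b_tx)\,\de t,
\]
and combine it with $\at$ (for example, take $\at+\btt$). This lets us rerun \Cref{lem:properties_of_alpha'} and the argument behind \Cref{prop:continuous_exponential_recurrence}. The outcome is that for every $\epsilon$ there is a compact set $K'$, disjoint from a neighbourhood of the compact parts of $N(F,L)x_0$, whose complement is visited with frequency less than $\epsilon$, outside an exponentially small set of $s$. Borel--Cantelli then gives $\nu_s(N(F,L)x_0)\le\epsilon$ for almost every $s$, and letting $\epsilon\to0$ gives \Cref{prop:goal}.

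This only works if $\btt(x_A)<\infty$. That requires the exponential growth rate
\[
\rho_\cN(x_A)=\limsup_{t\to\infty}\frac{\log\beta_\cN(b_tx_A)}{t}
\]
to be $<\theta$, the analogue of \Cref{rem:rhoxA}.

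\textbf{Main obstacle.} The main difficulty is to show that $\rho_\cN(x_A)\ge\theta$ forces possibility~(\ref{item:algebraic obstruction}). I would argue by contradiction: a growth rate $\ge\theta$ yields integral vectors $\gamma_mp_H$ such that $b_{t_m}u_A\gamma_mp_H$ is exponentially close to the $H_d$-fixed part.

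First, using \Cref{prop:bt-Omega} and its $\beta_\cN$-analogue, I would absorb the $P_b^-$ freedom, including $z_A$ and \Cref{lem:ZGF-Pb-}. Since $\omega(A)<n$ excludes the rational-hyperplane case, the remaining candidates for $H$ are of the form $\SL_r(\bbF)$ restricted from $\bbF$ to $\Q$, embedded as the commutant of a degree-$m$ field, with $mr=n+1$. For these, the near-invariance says that the conjugates $\gamma_m^{-1}H\gamma_m$ give $\bbF$-rational $(r-1)$-dimensional subspaces $\cT_m$ with
\[
d(\cL_A,\cT_m)\le H_\cL(\cT_m)^{-\kappa}
\]
for $\kappa$ growing with the ratio of growth rate to height. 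This is exactly the $(\bbF,r-1)$-Liouville property.

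The classification step, showing that the only $\bfH$ with $F\subset gLg^{-1}$ which are not ruled out by $\omega(A)<n$ are these Weil restrictions, is where I expect the real work to lie. I would attack it by looking at the $F$-invariant subspaces of $\R^{n+1}$: the fixed-point structure of $U$, together with the simplicity of $a_t$, should force $\Lie(H)$ to act irreducibly over $\R$ while commuting with a division algebra that can only be a field $\bbF$.
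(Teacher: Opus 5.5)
Your overall architecture is the paper's: a $b_t$-averaged Margulis function, exponential recurrence, Borel--Cantelli, and a dichotomy on the $b_t$-growth rate at $x_A$. But both load-bearing steps are missing, and the first would fail as you describe it. Contraction under $c_tu(I^d)$ comes from the \emph{full} nontrivial $H_d$-isotypic components, not from components transverse to $U$-fixed vectors. Already for $d=1$ on the standard representation, $c_tu(s)(x,y)=(e^{t/2}(x+sy),e^{-t/2}y)$: the component transverse to the $U$-fixed line shrinks for every $s$, so any negative power of it grows. On the other hand, a function built from nontrivial $H_d$-components blows up only where the relevant vector becomes $H_d$-fixed, i.e.\ on an $H_d$-invariant piece such as $\cN=\exp(O)M_1y$. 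The singular set is larger: it also contains the translates $\exp(\fp_1)\cN$, because $\fp_1=\Lie(Z_G(F))\cap\fp\neq 0$ (cf.\ \Cref{lem:ZGF-Pb-}), $\exp(\fp_1)\subset Z_G(F)$ preserves $N(F,M)$, and $\fp$ has no $H_d$-fixed vectors. The paper handles this in three steps. (i) It enlarges $N_G(H)$ to $M=Z_G(Z_G(H))$ and computes $N(F,M)=\bigcup_iZ_G(F)M_1p_ig_1$ (\Cref{prop:ZFM}). (ii) It measures displacement from the $H_d$- and $b_t$-invariant set $\cN$ in the coordinates $\fq\oplus\fp$, with exponents $\theta'$ and $d\theta'$ chosen so both contract at the common rate $\frac{d}{d+1}\theta'$. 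This is cut off at the scale $r_x=\eps_0[\alpha_\eps^{\frac{d}{d+1}\sigma}(x)]^{-1}$, below which the representation is unique (\Cref{lem:back to compact}, \Cref{lem:size of r_x}), and $\alpha_\eps^\theta$ is added. The three-case argument of \Cref{prop:properties of beta} then gives a contraction inequality with constants uniform over all of $X$, including the cusp. Your item (4), local finiteness on compacta, cannot provide this uniformity. (iii) It recovers the $\fp_1$-directions through the $b_t$-average: $\Ad(b_t)$ acts on $\fp$ by $e^{-t/(d+1)}$, so $\beta^{\theta'}$ grows at rate $\frac{d\theta'}{d+1}>\delta$ along $b_t\exp(\fp_1)\cN$. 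Consequently the correct threshold is $\rho_{\bN,\Omega}(zx_A)<\frac{d}{d+1}\theta'$ for every $z\in Z_G(F)$, not $\rho_\cN(x_A)<\theta$.

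Second, your Liouville step has no control of heights, and that control is exactly what makes $\kappa$ arbitrary. Large growth produces $\bbF$-rational $(r-1)$-dimensional subspaces $\cT_i$ at distance $e^{-(1-\epsilon_i)t_i}$ from $\overline{\cA}$, so you need $H_\cL(\cT_i)=e^{o(t_i)}$. Any bound passing through $\norm{b_{-t_i}}$ is exponential and yields only one fixed $\kappa$. In the paper, \Cref{prop:bt-Omega} first gives $\alpha_\eps^\theta(b_{t_i}\omega_ix_A)\le e^{\rho_1t_i}$; this is where $\omega(A)<n$ enters. So the growth must come from $\beta^{\theta'}$ above the cutoff $r_x^{-\theta'}$. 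Next, since $\{b_t\}\subset M_1$ commutes with $\exp(O)$, undoing $b_{t_i}$ enlarges only the $\fp$-part $\Ad(b_{-t_i})w_i$. This gives $\gamma_i=l_im_i$ with $\norm{l_i}\ll e^{(\epsilon_i/d)t_i}$ and $m_i\in M$. Reduction theory for $M\cap\Gamma$ together with \cite[Theorem~1.1]{DO22} then replaces $\gamma_i$ by $\gamma_i\lambda_i$, with $\lambda_i\in M\cap\Gamma$, of subexponential size, which bounds $H_\cL(\cT_i)$. Finally, the classification you single out as the main difficulty is comparatively routine and has nothing to do with $\omega(A)<n$. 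It is settled, without identifying $\bfH$, by the structure $\bfV\cong\Res_{\bbF/\Q}\bfW$ from Tits and Einsiedler--Shi, by \Cref{prop:sln} (giving $\bbF\neq\Q$), by the count of $a_t$-expanding dimensions (giving $r\ge d+1$), and by passing to the larger group $M$.
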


The following sections are devoted to the proof of this proposition. 

\subsection{Larger singular submanifolds with simpler structure} 
\label{subsec:SingSet}
Let $\bfH$ be a proper semisimple subgroup of $\bfG$ defined over $\bfQ$ such that $Z_\bfG(\bfH)$ is $\bfQ$-anisotropic, and $N_G(H)$ contains a conjugate of $F=SU$; that is, $N(F,N_G(H))\neq \emptyset$, see~\eqref{eq:NFL}, where $H=\bfH(\R)^0$. Consider the representation of $\bfH$ obtained from the restriction of the standard representation of $\bfG=\bf{SL}_{n+1}=\SL(\bfV)$, where $\bfV$ is an $(n+1)$-dimensional vector space over $\Q$. 
Then, the $\bfH$-action on $\bfV$ has only one $\Q$-isotypic component. Now, using \cite[Theorem 7.2]{Tits71} and the fact that a conjugation of the simple one-parameter $\R$-diagonalizable subgroup $S=\{a_t\}$ is contained in $Z_G(H)H$, in \cite[Proof of Proposition~2.1]{EinsiedlerShi19} it is shown that there exists a number field $\bbF$ contained in $\R$, a vector space $\bfW$ over $\bbF$, and a representation $\rho:\bfH\to\SL(\bfW)$ defined over $\bbF$ which is irreducible over $\C$ such that 
    \[
    \bfV\cong \Res_{\bbF/\bbQ} \bfW.
    \]
    In particular, $\bfH$ acts irreducibly over $\bbQ$ and the centralizer 
    $Z_{\bfG}(\bf H)$ is abelian, and hence $Z_\bfG(\bfH)$ is a $\Q$-anisotropic torus. 

    \begin{prop} \label{prop:sln} 
        Suppose $L$ acts irreducibly on $\bfV$ over $\C$ and $L$ contains a conjugate of $F=SU$. Then $L=\SL_{n+1}(\R)$. In particular, $\bf H={\bf SL}_{n+1}$.
    \end{prop}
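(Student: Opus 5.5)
The plan is a Lie algebra argument that uses only the simple diagonal part $S=\{a_t\}$ of $F$; the unipotent part $U$ is not needed. Throughout, $L$ is connected, as it is in the situation of interest, where $L=N_G(H)^0$. After conjugating, we may assume $S\subset L$. Put $\fz=\operatorname{diag}(n,-1,\dots,-1)$, so that $\Lie(S)=\R\fz$ and $\fz\in\fl\defeq\Lie(L)$. Irreducibility of the connected group $L$ on $\bfV\otimes\C$ is the same as irreducibility of $\fl_\C$.

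Since $\fz$ is semisimple, $\ad(\fz)$ on $\mathfrak{sl}_{n+1}$ has only the eigenvalues $n+1$, $0$ and $-(n+1)$. This gives a grading
\[
\fl_\C=\fl_{-}\oplus\fl_0\oplus\fl_+ ,
\]
where $\fl_+$ lies in the upper right row block $\operatorname{Hom}(V',\C e_0)$, $\fl_-$ lies in the lower left column block $\operatorname{Hom}(\C e_0,V')$, and $\fl_0$ lies in the block diagonal $\mathfrak{s}(\mathfrak{gl}_1\oplus\mathfrak{gl}_n)$. Here $V'=\operatorname{span}(e_1,\dots,e_n)$ is the $(-1)$-eigenspace of $\fz$. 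On vectors we have the weight relations
\[
\fl_+e_0=0,\quad \fl_-V'=0,\quad \fl_0e_0\subset\C e_0,\quad \fl_0V'\subset V'.
\]

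The first key step is to show that $\fl_-e_0=V'$. By irreducibility, $\C^{n+1}=U(\fl_\C)e_0$. Using $[\fl_0,\fl_-]\subset\fl_-$ and $\fl_0e_0\subset\C e_0$, we get $\fl_0\fl_-e_0\subset\fl_-e_0$. Together with $\fl_-V'=0$ and $\fl_+\fl_-e_0\subset\C e_0$, this shows that $\C e_0\oplus\fl_-e_0$ is $\fl_\C$-invariant. Hence it equals $\C^{n+1}$, and so $\fl_-e_0=V'$. Since $\fl_-\to V'$, $X\mapsto Xe_0$, is injective on the column block, $\fl_-$ is the full $n$-dimensional lower left block.

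Next apply the same argument to the dual representation, which is also irreducible, with $-\fz^{t}$ in place of $\fz$. Its top eigenvalue $n$ is again simple and all other eigenvalues equal $-1$. This gives that $\fl_+$ is the full upper right block. The brackets of the full upper right and lower left blocks span all of $\mathfrak{sl}_{n+1}$: for example, $[E_{0i},E_{j0}]$ produces all $E_{ji}$ with $i\neq j$ and all $E_{00}-E_{ii}$. Hence $\fl_\C=\mathfrak{sl}_{n+1}(\C)$, so $\fl=\mathfrak{sl}_{n+1}(\R)$, and since $L$ is connected, $L=\SL_{n+1}(\R)$.

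For the "in particular" assertion, recall that $L=N_G(H)^0=Z_G(H)^0H$. Then $H$ is a nontrivial connected normal subgroup of the simple group $\SL_{n+1}(\R)$ (nontrivial because $\bfH$ is semisimple and the argument concerns a nontrivial $H$), so $H=\SL_{n+1}(\R)$. Because $\bfH$ is connected and Zariski dense, $\bfH=\mathbf{SL}_{n+1}$.

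The only delicate point is the bookkeeping of the grading, namely checking that $\C e_0\oplus\fl_-e_0$ is invariant and that the dual representation has the same eigenvalue pattern. I expect no serious obstacle beyond that. One must also make sure the irreducibility hypothesis is applied to the connected group, which is why we work with $L=N_G(H)^0$.
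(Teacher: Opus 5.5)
Your proof is correct, but it follows a genuinely different route from the paper.

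\textbf{The paper's route.} The paper uses the unipotent part $U$ first. By Jacobson--Morozov and Kostant, the reductive group $L$ contains a conjugate of an $\SL_2(\R)$ built from a one-parameter subgroup of $U$. The classification result \cite[Theorem~A.7]{SY24PLMS} then says that an irreducible such $L$ is either $\SL_{n+1}(\R)$ or $\Sp(\R^{n+1},\omega)$. The simple diagonal group $S$ enters only at the end, to exclude $\Sp$ because symplectic eigenvalues come in reciprocal pairs.

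\textbf{Your route.} You use only $S$. The element $z=\diag(n,-1,\dots,-1)$ has exactly two eigenvalues, one of them simple. So $\operatorname{ad}(z)$ induces a depth-one grading of $\mathfrak{l}_\C$. Irreducibility then forces the block mapping $e_0$ into $V'$ to be full, since $\C e_0\oplus\mathfrak{l}_-e_0$ is invariant. Applying the same argument to the dual representation makes the opposite block full, and the brackets of the two blocks give $\mathfrak{sl}_{n+1}(\C)$.

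\textbf{Comparison.} Your argument is self-contained and elementary. It needs neither the reductivity of $L$, nor $U$, nor the external classification theorem. It also proves a stronger statement: every connected subgroup of $\SL_{n+1}(\R)$ that acts irreducibly over $\C$ and contains a conjugate of $S$ is all of $\SL_{n+1}(\R)$. The paper's version is shorter on the page but rests on those black boxes. Your restriction to connected $L=N_G(H)^0$ is sufficient for how the proposition is used afterwards.

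\textbf{A small slip.} In the dual step, $-z^{t}=\diag(-n,1,\dots,1)$. So the simple eigenvalue there is $-n$ and the others equal $1$, not $n$ and $-1$ as you wrote. This does no harm, for either of two reasons:
\begin{itemize}
\item your invariance argument never uses the sign of the eigenvalues, only that one eigenvalue is simple and the remaining ones coincide;
\item the image of $\mathfrak{l}_\C$ in the dual representation is a linear space, so it contains $z^{t}=z$ as well.
\end{itemize}
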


    \begin{proof}
        Since $L$ contains a conjugate of $U$ and $L$ is reductive, by a theorem due to Jacobson--Morozov and Kostant, a conjugate of $H_2\cong\SL_2(\R)$ is contained in $L$. Therefore by \cite[Theorem A.7]{SY24PLMS}, either $L=\SL_{n+1}(\R)$ or $n+1$ is even and $L={\Sp}(\R^{n+1},\omega)$ for a symplectic form $\omega$ on $\R^{n+1}$. Now eigenvalues of a symplectic matrix occur in reciprocal pairs, so a conjugate of $S=\{a_t\}_{t\in\R}$ is not contained in ${\Sp}(\R^{n+1},\omega)$ for $n>1$. Since a conjugate of $S$ is contained in $L$, we conclude that $L= \SL_{n+1}(\R)$; we note that $\Sp(\R^2,\omega)=\SL_2(\R)$.
    \end{proof}
    
 We have that $\bfH\neq \bfG$, so $N_G(H)\neq G$. And $N_G(H)$ contains a conjugate of $F$, So, by Proposition~\ref{prop:sln}, $N_G(H)$, and hence $H$, does not act irreducibly on $V$ over $\C$. Therefore, $\bbF\neq \Q$. 
 
    Let $m=[\bbF:\Q]$. Now $m=s_1+2s_2$, where $\{\sigma_i:1\leq i\leq s_1\}$ are the real and 
    $\{\sigma_{s_1+i},\overline{\sigma_{s_1+i}}:1\leq i\leq s_2\}$ are the complex embeddings of $\mathbb{F}$. Let $\{\alpha_1,\ldots,\alpha_m\}$ be a basis of $\bbF$ over $\bbQ$. Now,
    \begin{equation} \label{eq:V-Vi}
    V=\prod_{i=1}^{s_1+s_2} V_i \text{, where } 
    V_i=\begin{cases}
        \bfW^{\sigma_i}({\bbR}) \text{, if } 1\leq i\leq s_1\\
        \bfW^{\sigma_i}(\C)
        \text{, if } s_1+1\leq i\leq s_1+s_2 \text{, as a $\R$-vector space,}
    \end{cases}
    \end{equation}
    and 
    \begin{equation} \label{eq:H-Hi}
    H=\prod_{i=1}^{s_1+s_2}{H_i} \text{, where } 
    H_i=\begin{cases}
        \rho^{\sigma_i}(\bfH)(\bbR) \text{ for } 1\leq i\leq s_1\\
        \rho^{\sigma_i}(\bfH)(\C)\text{ for } s_1+1\leq i\leq s_1+s_2 \text{, as a real Lie group,}
    \end{cases}
    \end{equation}
    and $H_i$ acts irreducibly on $V_i$ for each $i$; see \cite[p.~424, (2.2)]{EinsiedlerShi19}. 
    Now
    \begin{equation} \label{eq:Z-Zi}
    Z_G(H)=G\cap \prod_{i=1}^{s_1+s_2} Z_i \text{, where } 
    Z_i=\begin{cases}
        \R^\ast I_{V_i} \text{ for } 1\leq i\leq s_1 \\
        \C^\ast I_{V_i} \text{ for } s_1+1 \leq i\leq s_1+s_2.
    \end{cases}
    \end{equation}
    
    Since $\bbF\subset \R$, $s_1\geq 1$ and since $\bbF\neq\bbQ$, we have $m\geq 2$. Therefore, $\R$-rank of $Z_G(H)$ is $s_1+s_2-1\geq 1$. 
    Let $\bfM$ be the centralizer of $Z_G(\bf H)$ in $\bfG$. Then
    \begin{equation} \label{eq:M-prod}
       M=G\cap\prod_{i=1}^{s_1+s_2}J_i \text{, where } 
    J_i=\begin{cases}
        \GL_\R(\bfW^{\sigma_i}({\bbR})) \text{ for } 1\leq i\leq s_1\\
        \GL_\C(\bfW^{\sigma_i}(\C))\text{ for } s_1+1\leq i\leq s_1+s_2.
    \end{cases} 
    \end{equation}
    
    Let $r=\dim_\bbF \bfW$. Then, $\dim V_i=r$ for $1\leq i\leq s_1$ and $\dim V_{i}=2r$ for $s_1+1\leq i\leq s_1+s_2$. So, $n+1=rm$. 
    
    Now $\bfM$ is a reductive $\Q$-subgroup of $\bfG$ and its center equals $Z_\bfG(\bfH)$.  
    Since $Z_\bfG(\bfH)$ is $\Q$-anisotropic, $\bfM$ admits no nontrivial $\Q$-characters. So, $Mx_0$ admits a finite $M$-invariant measure, and in particular, $Mx_0$ is closed in $X$. Also, $N_G(H)^0=HZ_G(H)^0\subset M$. So, by \eqref{eq:NFL-nor},
    \begin{equation}
        \label{eq:NFM}
         N(F,M)\supset N(F,N_G(H)^0)=N(F,N_G(H))\neq \emptyset. 
    \end{equation}
   
   \subsubsection{Claim: $r\geq d+1$} \label{claim:3.2.1} As we will explain, this is a consequence of the fact that the dimension of the strictly expanding weight space of $\Ad(a_t)$ on the Lie algebra of $F$ is $d$. Due to \eqref{eq:NFM}, pick $g\in G$ be such that $gFg^{-1}\subset M$; so $ga_tg^{-1}$ preserves $V_i$ for each $i$. Therefore, the eigen space of $ga_tg^{-1}$ corresponding to the simple eigenvalue $e^{\frac{n}{n+1}t}$ is contained in exactly one $V_i$ corresponding to a real embedding of $\bbF$, as any complex embedding would have repeated real eigenvalues; and hence on each of the $V_j$ for $j\neq i$, $ga_tg^{-1}$ acts as a scalar. Therefore, the strictly expanding eigen space of $\Ad(ga_tg^{-1})$ on the Lie algebra of $M$ is contained in the Lie algebra of $\GL(V_i)$ and hence its dimension is $r-1$. Since $M$ contains $gFg^{-1}$, and the dimension of the expanding eigen space of $\Ad(ga_tg^{-1})$ on the Lie algebra of $gFg^{-1}$ is $d$, we get $r-1\geq d$. This proves the claim. 

   \subsubsection{Choice of a conjugate of $M$} \label{sec:v0}
     Let $v_0$ be a regular $\Q$-point of the Lie algebra of $Z_G(H)$. Then $\bfM$ is the isotropy group of $v_0$ for the adjoint representation of $\bfG$. Let $g_1\in G$ be a change of basis matrix corresponding to the decomposition~\ref{eq:V-Vi}, where for $i\geq s_1+s_2$, the multiplication by $z\in\C$ is given by the matrix $\varphi(z)=\begin{psmallmatrix}
        \Re(z)I_r & \Im(z)I_r \\
        -\Im(z)I_r & \Re(z)I_r
    \end{psmallmatrix}$. Due to the construction of restriction of scalars (see, e.g.~\cite[Proposition~6.1.3]{Zimmer1984ErgodicGroups}), there exists $x\in \bbF$ such that $\Ad g_1(v_0)$ is of the form
	\begin{equation} \label{eq:adg1v0}
	\begin{psmallmatrix}
	\sigma_{1}(x)I_r & & & & & \\
	& \ddots & & & & \\
	& &  \sigma_{s_1}(x)I_r & & & \\
	& & & \varphi(\sigma_{s_1+1}(x))& & \\
	& & & & \ddots & \\
	& & & & & \varphi(\sigma_{s_1+s_2}(x))
	\end{psmallmatrix}.
	\end{equation}
    Due to \eqref{eq:V-Vi} and \eqref{eq:M-prod}, the $\sigma_i(x)$ are all distinct for $1\leq i\leq s_1+s_2$. In particular, $x\not\in\Q$. 

    \begin{remark} \label{rem:g1-F}
    We have chosen $g_1$ such that for $1\leq i\leq s_1$, the $i$-th block of $(r\times r)$-rows of $g_1$ are defined over $\sigma_i(\bbF)$. 
\end{remark}

    We set $M_1=g_1Mg_1^{-1}$. Then $M_1$ is the centralizer of $\Ad(g_1)v_0$.  As a consequence of Claim~\ref{claim:3.2.1}, $H_d\subset M_1$. Moreover the full diagonal subgroup of $G$ is contained in $M_1$. In particular, $F\subset M_1$, so $g_1\in N(F,M)$. 

 \begin{prop}
        \label{prop:ZFM}
        There exist $p_1,\ldots,p_{s_1}\in N_G(M_1)$ such that 
        \[
        N(F,M)=\bigcup_{i=1}^{s_1} Z_G(F)M_1p_ig_1.
        \]
        In fact, for each $1\leq i\leq s_1$, we let $p_i\in \bfG(\Z)$ to be a permutation on coordinates such that under conjugation it sends the $i$-th $(r\times r)$-block of $M_1$ to the first $(r\times r)$-block of $M_1$. 
    \end{prop}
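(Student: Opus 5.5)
Since $g_1\in N(F,M)$, we have $N(F,M)g_1^{-1}=N(F,M_1)$: indeed $g^{-1}Fg\subset M$ iff $(gg_1^{-1})^{-1}F(gg_1^{-1})\subset g_1Mg_1^{-1}=M_1$. So it suffices to prove
\[
N(F,M_1)=\bigcup_{i=1}^{s_1}Z_G(F)M_1p_i .
\]
The inclusion $\supseteq$ is formal. We have $F\subset M_1$, and $p_i\in N_G(M_1)$ because $p_i$ permutes the real $(r\times r)$ blocks and so preserves the block structure \eqref{eq:adg1v0} of $M_1$. Hence $(zmp_i)^{-1}F(zmp_i)=p_i^{-1}m^{-1}Fmp_i\subset M_1$ for $z\in Z_G(F)$ and $m\in M_1$.

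For $\subseteq$, let $h\in N(F,M_1)$. Then $h^{-1}Fh\subset M_1$, so $h^{-1}a_th$ preserves each block $V_j$ from the decomposition \eqref{eq:V-Vi} transported by $g_1$. As in Claim~\ref{claim:3.2.1}, the simple top eigenline of $h^{-1}a_th$ lies in a single real block $V_i$ with $i\le s_1$, and $h^{-1}a_th$ acts as the scalar $e^{-t/(n+1)}$ on every other block. Replacing $h$ by $hp_i^{-1}$, which lies in $N(F,M_1)$ since $p_i$ normalizes $M_1$, we may assume $i=1$, i.e.\ the eigenline lies in the first block.

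Now $h^{-1}a_th$ and $a_t$ are both elements of $M_1$: diagonalizable, with eigenvalue $e^{nt/(n+1)}$ on a line in the first block and $e^{-t/(n+1)}$ elsewhere. They are conjugate inside the first factor $\GL(V_1)$, and the determinant can be fixed, so there is $m_1\in M_1$ with $m_1^{-1}h^{-1}a_thm_1=a_t$. Thus $hm_1\in Z_G(S)$. After this we still need to handle $U$.

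The next step is to arrange $(hm_1)^{-1}U(hm_1)=U$ modulo $M_1$. Set $h'=hm_1$, so $h'\in Z_G(S)$ and $h'^{-1}Uh'\subset M_1$. Inside $M_1$, the subgroup $h'^{-1}Uh'$ lies in the expanding horospherical subgroup of $a_t$ in $M_1$, namely $U^+_a\cap M_1$. This is the abelian group of matrices $e_0\otimes w$ with $w$ ranging over the first block complement, of dimension $r-1$. Since $Z_G(S)$ acts on $U_a^+\cong\R^n$ through $\GL_n$, with the stabilizer of the subspace $\R^{r-1}$ (corresponding to $M_1$) of the form $Z_G(S)\cap M_1$, the subspace $h'^{-1}Uh'$ is a $d$-dimensional subspace of $\R^{r-1}$. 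By the transitivity of $\GL_{r-1}\subset Z_{M_1}(S)$ on $d$-dimensional subspaces of $\R^{r-1}$, there is $m_2\in Z_{M_1}(S)$ with $m_2^{-1}h'^{-1}Uh'm_2=U$. Then $h'm_2$ normalizes $U$ and centralizes $S$, so $h'm_2\in N_G(F)$.

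Finally we must upgrade $N_G(F)$ to $Z_G(F)M_1$. Here $N_G(F)\cap Z_G(S)$ acts on $U\cong\R^d$ by $\GL_d$, and this $\GL_d$ is realized inside $H_d\cap Z_G(S)\subset M_1$ (using $H_d\subset M_1$). So $N_G(F)\cap Z_G(S)\subset Z_G(F)M_1$. Combining, $h\in Z_G(F)M_1p_i$.

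The main obstacle is the middle steps: checking carefully that the conjugating elements land in $M_1$ (determinant one, and respecting the complex blocks) and that the normalizer-to-centralizer reduction is legitimate, so that the bookkeeping of which side $Z_G(F)$ and $M_1$ sit on works out.
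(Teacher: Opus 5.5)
Your reduction to $N(F,M_1)=\bigcup_i Z_G(F)M_1p_i$, the inclusion $\supseteq$, and the first steps of $\subseteq$ agree with the paper. These are moving the top eigenline into the first block with $p_i$, and then conjugating back to $a_t$ by some $m_1\in M_1$; the determinant is harmless there because the centralizer $\GL_1\times\GL_{r-1}$ of $a_t$ in the first block has elements of either sign. The paper then chooses $m_2$ in $M_1$, commuting with $a_t$, that carries $h'^{-1}Uh'$ back to $U$ \emph{pointwise}, so that $h'm_2\in Z_G(F)$ directly. You match $U$ only as a set, and then rely on $N_G(F)\cap Z_G(S)\subset Z_G(F)M_1$, which you justify by realizing the induced $\GL_d$-action on $U$ inside $H_d\cap Z_G(S)$.

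That justification is false for odd $d$. An element $\diag(\lambda,B)$ of $H_d\cap Z_G(S)$, with $B\in\GL_d(\R)$ and $\lambda\det B=1$, acts on $U\cong\R^d$ by $s\mapsto\lambda sB^{-1}$. The determinant of this map is $\lambda^{d+1}$, which is positive for odd $d$, so only $\GL_d^+(\R)$ is realized. For $d=1$ these elements act on $U$ by the positive scalars $\lambda^2$. On the other hand, $\diag(1,-1,D)$ with $\det D=-1$ lies in $N_G(F)\cap Z_G(S)$ and acts on $U$ by $-1$. So the determinant issue you set aside as bookkeeping is exactly what your last step needs, and it is not automatic.

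To absorb an orientation-reversing automorphism $s\mapsto sC$ of $U$, you must use more of $Z_{M_1}(S)\cap N_G(U)$ than $H_d$. If $d<r-1$, take $\diag(1,C^{-1},R)$ in the first block with $\det R=\det C$. If $s_1\ge 2$, correct the determinant by an element of another real block, which commutes with $a_t$ and fixes $U$ pointwise. The paper's $m_2$, taken in the first block with determinant one, tacitly needs the same sign control.

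In the remaining case, $d=r-1$ with $d$ odd and $s_1=1$, no correction exists, and the statement as written actually fails. Take $\bbF=\Q(\sqrt[3]{2})$, $r=2$, $d=1$, $n=5$, and $h=\diag(1,-1,-1,1,1,1)$. Then $h^{-1}Fh=F\subset M_1$. But any $k\in M_1$ inducing the same automorphism of $F$ as $h$ has first block $\diag(\alpha,-\alpha)$, of negative determinant, while its $\GL_2(\C)$-block has positive real determinant. This contradicts $\det k=1$. Since $I\in N(F,M_1)$ as well, $h$ and $I$ cannot lie in a common set $Z_G(F)M_1p_1$. Indeed, if $I=z_0m_0p_1$ and $h=zmp_1$, then $k=mm_0^{-1}$ would be such an element. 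So an additional representative is needed in this case.
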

    \begin{proof} Let $g\in N(F,M)$. Then $g^{-1}Fg\subset M$. So $(g_1g^{-1})F(gg_1^{-1})\subset M_1$. Now the eigenvalue $e^{\frac{n}{n+1}t}$ of $(g_1g^{-1})a_t(gg_1^{-1})$ must occur in the $i$-th $(r\times r)$-blocks of $M_1$ where $1\leq i\leq s_1$. Let $p=p_i\in N_G(M_1)$ be as chosen in the statement of the proposition. Therefore, the eigenvalue of $(pg_1g^{-1})a_t(pg_1g^{-1})^{-1}$ is in the first $r\times r$-block of $M_1$. Further, we can find $m_1\in M_1$ such that $(m_1pg_1g^{-1})a_t(m_1pg_1g^{-1})^{-1}=a_t$. Moreover, 
    \[
(m_1pg_1g^{-1})U(m_1pg_1g^{-1})^{-1}\subset m_1p(g_1g^{-1})F(gg_1^{-1})p^{-1}m_1^{-1}\subset M_1.
    \]
    Now, $(m_1pg_1g^{-1})U(m_1pg_1g^{-1})^{-1}$ is contained in the expanding horospherical subgroup of $a_t$ in $M_1$, so it is contained in the first row of the first $r\times r$-block of $M_1$. So we can pick, $m_2$ in the first block of $M_1$ such that $m_2$ commutes with $a_t$ and 
\[m_2(m_1pg_1g^{-1})u(m_1pg_1g^{-1})^{-1}m_2^{-1}=u, \, \forall u\in U.
\]
Let $z=(m_2m_1pg_1g^{-1})^{-1}$. Then $z\in Z_G(F)$ and $g=zm_2m_1pg_1\in zM_1pg_1$. There are only $s_1$ possibilities for $p$ as above; they are representatives of $Z_G(F)\backslash N_G(M_1)/M_1$ in $N_G(M_1)$.
    \end{proof}

    \subsection{Construction of a height function with respect to the singular submanifold} \label{sec:Mf}

    Fix any $0<\theta<\frac{d}{d+1}$. We pick $\theta'$ such that 
    \begin{equation} \label{eq:theta and theta'}
        \frac{d}{d+1}\theta'<\theta<\frac{d}{d+1}.
    \end{equation}

  For any $l\geq 1$, we define
    \begin{equation*}
        X_l = \{ x\in X \mid \alpha_\eps^\theta(x)\leq l \}.
    \end{equation*}
    Since $\alpha^\theta_\eps$ is lower semicontinuous and proper, $X_l$ is a compact subset of $X$. Let
    \begin{equation*}
        X'=\{ x\in X \mid \alpha_\eps^\theta(x)<+\infty \}.
    \end{equation*}
    As we have noted in the paragraph following \eqref{eq:alpha_eps^theta}, $H_dX'=X'$.

    \subsubsection{Around a chosen section of the singular submanifold}  
In view of \Cref{prop:ZFM}, pick $p=p_i\in N_G(M_1)$ for some $1\leq i\leq s_1$, and let 
\begin{equation} \label{eq:Y}
\text{$y=pg_1x_0$, $Y=M_1y=pg_1Mx_0$,}
\end{equation}
which is a closed orbit of $M_1$ in $X$.  

Let $\fs$ denote the $\Ad(H_d)$-invariant subspace of $\Lie(Z_G(H_d))$ which is complementary to $\Lie(M_1)$. Then $\fg=\Lie(G)$ can be expressed as a direct sum of $\Ad(H_d)$-invariant subspaces 
    \begin{equation} \label{eq:g=zmpq}
        \fg = \fq \oplus \mathfrak{p}\oplus \fs\oplus\Lie(M_1),
   \end{equation}
    where
    \begin{align}
        \mathfrak{q}=\left\{ 
        \begin{psmallmatrix}
            & B_1 \\
             &
        \end{psmallmatrix}: B_1\in\Mat_{d+1,n+1-r} \right\}, \qquad
        \mathfrak{p}=\left\{ 
        \begin{psmallmatrix}
            & \\
            B_2 &
        \end{psmallmatrix}: B_2\in\Mat_{n+1-r,d+1} \right\}.
        \label{eq:p}
    \end{align}
 
    \begin{remark} \label{rem:Hd on p+q}
        Note that $\mathfrak{q}$, as a $H_d\cong\SL_{d+1}(\R)$-module, is a direct sum of $(n+1-r)$-copies of the standard representation $\R^{d+1}$, and $\mathfrak{p}$ is a direct sum of $(n+1-r)$-copies of the contragradient of the standard representation, which is isomorphic to $\wedge^d \R^{d+1}$.

        In particular, the eigenvalues of $c_t$-action on $\mathfrak{q}$ are $e^{\frac{d}{d+1}t}$ and $ e^{-\frac{1}{d+1}t}$; and on $\mathfrak{p}$ they are $e^{\frac{1}{d+1}t}$ and $e^{-\frac{d}{d+1}t}$. 
    \end{remark}

As a consequence we have the following:

\begin{lem} \label{lem:Lipschitz in q}
        For every $v\in\mathfrak{q}$, $w\in\mathfrak{p}$, $t\geq0$ and $s\in \R^d$,
        \begin{equation} \label{eq:Lipschitz for v}
            e^{-\frac{1}{d+1}t}\norm{u(s)^{-1}}^{-1}\norm{v} \ll \norm{c_tu(s)v} \ll e^{\frac{d}{d+1}t}\norm{u(s)}\norm{v},
        \end{equation}
        \begin{equation} \label{eq:Lipschitz for w}
            e^{-\frac{d}{d+1}t}\norm{u(s)^{-1}}^{-1}\norm{w} \ll \norm{c_tu(s)w} \ll e^{\frac{1}{d+1}t}\norm{u(s)}\norm{w}.
        \end{equation}
    \end{lem}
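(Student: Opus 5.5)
The plan is to reduce both estimates to the action of $c_t$ and $u(s)$ on the two $H_d$-modules described in \Cref{rem:Hd on p+q}, using the fact that $\fq$ and $\fp$ are $\Ad(H_d)$-invariant, so $\Ad(c_tu(s))$ preserves each of them. Here $c_tu(s)v$ means $\Ad(c_tu(s))v$. We write
\[
\Ad(c_tu(s))v=\Ad(c_t)\bigl(\Ad(u(s))v\bigr),
\]
and first bound $\Ad(c_t)$ on $\fq$ (resp.\ $\fp$), and then the norm of $\Ad(u(s))$ restricted to these modules.

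\textbf{Step 1: the $c_t$-part.} By \Cref{rem:Hd on p+q}, $\fq$ is a direct sum of copies of the standard representation $\R^{d+1}$ of $\SL_{d+1}(\R)$. Concretely, $\Ad(h)\begin{psmallmatrix} & B_1\\ & \end{psmallmatrix}$ corresponds to $hB_1$ for $h\in\SL_{d+1}(\R)$. Hence $c_t$ acts diagonally on $\fq$ with eigenvalues $e^{\frac{d}{d+1}t}$ and $e^{-\frac{1}{d+1}t}$. For any $w'\in\fq$ this gives
\[
e^{-\frac{1}{d+1}t}\norm{w'}\ll\norm{c_tw'}\ll e^{\frac{d}{d+1}t}\norm{w'},
\]
with the constant depending only on the comparison between the chosen norm and the sup norm in coordinates $B_1$.

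Similarly, on $\fp$, $h$ acts by $B_2\mapsto B_2h^{-1}$. So $c_t$ acts on $\fp$ with eigenvalues $e^{-\frac{d}{d+1}t}$ and $e^{\frac{1}{d+1}t}$, and we get
\[
e^{-\frac{d}{d+1}t}\norm{w'}\ll\norm{c_tw'}\ll e^{\frac{1}{d+1}t}\norm{w'}.
\]

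\textbf{Step 2: the $u(s)$-part.} On $\fq$, $\Ad(u(s))$ acts as left multiplication $B_1\mapsto u(s)B_1$, where $u(s)$ is viewed as its $(d+1)\times(d+1)$ block. On $\fp$ it acts as $B_2\mapsto B_2u(s)^{-1}$. In both cases the operator norm of $\Ad(u(s))|_{\fq}$ or $\Ad(u(s))|_{\fp}$ is $\ll\norm{u(s)}$, using $\norm{u(s)^{-1}}=\norm{u(-s)}\asymp\norm{u(s)}$ for the unipotent block. Consequently,
\[
\norm{u(s)^{-1}}^{-1}\norm{v}\ll\norm{u(s)v}\ll\norm{u(s)}\norm{v}.
\]
The lower bound comes from writing $v=u(s)^{-1}(u(s)v)$.

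\textbf{Step 3: combining.} Apply Step 1 with $w'=u(s)v$ (resp.\ $u(s)w$) and then Step 2; this gives \eqref{eq:Lipschitz for v} and \eqref{eq:Lipschitz for w} directly. There is no real obstacle. The only point needing care is the bookkeeping of the contragredient action on $\fp$, namely right multiplication by $h^{-1}$, which swaps the exponents to $\frac{1}{d+1}$ and $-\frac{d}{d+1}$. Another point is that the implied constants are uniform, since they come only from the finite-dimensional norm equivalences on $\fq$ and $\fp$.
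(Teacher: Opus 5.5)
Your proof is correct and follows the paper's approach: the paper states this lemma as an immediate consequence of \Cref{rem:Hd on p+q}, namely factor $\Ad(c_tu(s))=\Ad(c_t)\Ad(u(s))$, use the $c_t$-eigenvalues on $\fq$ and $\fp$ for $t\geq 0$, and bound $\Ad(u(s))^{\pm1}$ on these modules by $\norm{u(s)^{\pm1}}$. Your explicit description of the action, $B_1\mapsto hB_1$ on $\fq$ and $B_2\mapsto B_2h^{-1}$ on $\fp$, fills in details the paper leaves implicit; it implicitly uses $r\geq d+1$, so that the columns of $B_1$ are untouched by $H_d$.
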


\begin{remark}
    \label{rem:diffeo}
    For any $r>0$, let $B_r$ denote the norm ball of radius $r$ in $\fg$ and the map
    \[
    \psi:(\fq\cap B_r)\times(\fp\cap B_r)\times (\fs\cap B_r)\times (\Lie(M_1)\cap B_r)\to G,
    \]
    given by $(v,w,z,y)\mapsto \exp(-v)\exp(-w)\exp(z)\exp(y)$, and let $\Psi_r$ denotes its image. Then fix $0<r_0<1$ such that $\psi_{r_0}$ is a diffeomorphism onto $\Psi_{r_0}$.  
\end{remark}

    We will need an estimate for return to a fixed compact set in terms of $\alpha_\eps^\theta(x)$. 

\begin{lem} \label{lem:back to compact}
        Let $\sigma_0>1$ be given. Then there exist $t_1\geq 1$ and $l_1\geq 1$ such that for any $l\geq l_1$, and any $x\in X'$ the following holds: There exist 
        \[
        \tau\leq t_1+\sigma_0\theta^{-1}\log^+ (2l^{-1}\alpha_\eps^\theta(x)) \text{ and } s\in [-1,1]^d
        \]
        such that
        \[
        c_{\tau}u(s)x \in X_l.  
        \]
        Here $\log^+(y):=\max\{ \log y,0 \}$ for every $y>0$.
    \end{lem}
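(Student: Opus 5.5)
Iterate the contraction inequality of \Cref{lem:contraction_alpha}, and turn the averaged bound into a pointwise statement about a single $s$ by a Chebyshev (measure) argument.

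\textbf{Step 1: constants.} Fix $\sigma_0>1$. Pick $t_*\ge 1$ so large that $Ce^{-\theta t_*}\le e^{-\theta t_*/\sigma_0}/2$, where $C$ is the constant of \Cref{lem:contraction_alpha}; this is possible since $\sigma_0>1$. Let $b=b(t_*)$ be the corresponding additive constant. Choose $l_1$ large compared with $b$, say $l_1\ge 8b$.

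\textbf{Step 2: iteration.} Iterate $k$ times using $c_{t}u(s)c_{t'}u(s')=c_{t+t'}u(e^{-t'}s+s')$. The inner parameter is then spread over a set of measure comparable to $I^d$ inside $[-1,1]^d$, after the usual change of variables and a bounded Jacobian. One obtains
\[
\int_{I^d}\alpha_\eps^\theta(c_{kt_*}u(s)x)\,\de s\le 2^{-k}e^{-\theta kt_*/\sigma_0}\alpha_\eps^\theta(x)+2b.
\]
This is exactly how \cite[Lemma 4.12]{Shi20} is proved. The Lipschitz bound \Cref{lem:log Lipschitz} controls the boundary effects of replacing $I^d$ by a slightly larger box.

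\textbf{Step 3: choice of $k$ and Chebyshev.} Choose $k$ minimal with $e^{-\theta kt_*/\sigma_0}\alpha_\eps^\theta(x)\le l/2$. Then
\[
kt_*\le t_*+\sigma_0\theta^{-1}\log^+(2l^{-1}\alpha_\eps^\theta(x)).
\]
The integral is therefore at most $l/2+2b\le 3l/4$ once $l\ge l_1$. By Chebyshev, the set of $s\in I^d$ with $\alpha_\eps^\theta(c_{kt_*}u(s)x)\le l$ has positive measure. So some $s\in I^d\subset[-1,1]^d$ works, with $\tau=kt_*$ and $t_1=t_*$.

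\textbf{Obstacle.} The main obstacle is the bookkeeping of the composed unipotent parameters in the iteration. One must ensure the final $s$ stays in $[-1,1]^d$ and that the integrals against the pushed-forward measures are comparable to integrals over $I^d$.

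The simpler route is to avoid iterating on the same box. Apply \Cref{lem:contraction_alpha} directly with $t=\tau$ large, so that $Ce^{-\theta\tau}\alpha_\eps^\theta(x)\le l/2$; this requires $\tau\ge\theta^{-1}\log(2C\alpha_\eps^\theta(x)/l)$. The difficulty is that $b=b(\tau)$ depends on $\tau$ and may grow, which is precisely why the iteration with a fixed step $t_*$ is needed, and why the factor $\sigma_0>1$ is there: it absorbs the constant $C$ per step.

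If the change-of-variables bookkeeping is awkward, I would instead replace $\alpha$ by $\alpha$ averaged over a tiny box, using \Cref{lem:log Lipschitz} with small time, to make the step inequality hold uniformly in the base point.
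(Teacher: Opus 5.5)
Your argument is the paper's proof: a fixed-step iteration of \Cref{lem:contraction_alpha}, a geometric sum for the additive constants, minimal $k$, and then a choice of a good $s$. Your condition on $t_*$ is the paper's condition $\theta/(\theta-t_1^{-1}\log C)\le\sigma_0$, which is equivalent to $Ce^{-\theta t_1}\le e^{-\theta t_1/\sigma_0}$, with an extra factor $\tfrac12$.

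The one inaccurate point is the display in Step 2. The iteration does not bound the Lebesgue integral over $I^d$ with those constants. What it bounds is
\[
\int\alpha_\eps^\theta(c_{kt_*}u(s)x)\,\de\nu^{(k)}(s),
\]
where $\nu^{(k)}$ is the convolution of Lebesgue measure on $I^d$ with its rescalings by $e^{-t_*},\dots,e^{-(k-1)t_*}$. This $\nu^{(k)}$ is a probability measure supported in $(1-e^{-t_*})^{-1}I^d\subset[-1,1]^d$.

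Run your Chebyshev step against $\nu^{(k)}$ directly; this is what the paper does. It gives $s\in[-1,1]^d$, which is all the lemma asks for. No Jacobian or boundary correction via \Cref{lem:log Lipschitz} is then needed.

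If you insist on Lebesgue measure on $I^d$, you pick up a fixed factor, for example $\nu^{(1)}\le 4^d\nu^{(k)}$ when $t_*\ge 2$. That factor must be absorbed by enlarging $t_1$, so your inequality with constant $1$ in front of the main term is not justified as written.
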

    \begin{proof}
       Let $C>1$ be such that the conclusion of \Cref{lem:contraction_alpha} holds. Take $t_1\geq 1$ sufficiently large such that 
        \[
        \frac{\theta}{\theta-t_1^{-1}\log C}\leq\sigma_0.
        \]
        We will write $t=t_1$ for simplicity. By \Cref{lem:contraction_alpha}, there exists $b>0$ such that 
        \[
        \int_{I^d}\alpha_\eps^{\theta}(c_tu(s)x)\de s\leq
		Ce^{-\theta t}\alpha_\eps^{\theta}(x)+b.
        \]
        Iterating $n$ times,
        \begin{equation} \label{eq:exponential contraction}
        \begin{split}
            \int_{\R^d}\alpha_\eps^{\theta}(c_{nt}u(s)x)\de \nu^{(n)}(s)
            &\leq C^ne^{-n\theta t}\alpha_\eps^{\theta}(x)+\frac{1-C^ne^{-n\theta t}}{1-Ce^{-\theta t}}b \\
            &< C^ne^{-n\theta t}\alpha_\eps^{\theta}(x)+\frac{b}{1-Ce^{-\theta t}},
        \end{split}    
        \end{equation}
        where $\nu^{(n)}$ is a probability measure on $\R^d$ supported on $(1-e^{-t})^{-1}I^d$. Indeed, $\nu^{(1)}$ is the Lebesgue measure restricted to $I^d=[-1/2,1/2]^d$, and for each $n\geq 2$,
        \begin{align*}
&\int_{I^d}\left(\int_{\R^d}\alpha_\eps^\theta(c_{(n-1)t}u(s_1)c_{t}u(s)x)\,d\nu^{(n-1)}(s_1)\right)\,d\nu^{(1)}(s)\\
&=\int_{I^d}\left(\int_{\R^d}\alpha_\eps^\theta(c_{nt}u(e^{-t}s_1+s)x)\,d\nu^{(n-1)}(s_1)\right)\,d\nu^{(1)}(s)\\
&=\int_{\R^d}\alpha_\eps^{\theta}(c_{nt}u(s)x)\,d\nu^{(n)}(s),
\end{align*}
    where $\nu^{(n)}$ is the convolution of $(e^{-t}I_d)_\ast \nu^{(n-1)}$ and $\nu^{(1)}$, where $e_{-t}I_d$ denotes the multiplication by $e^{-t}$ on $\R^d$. 

    \begin{remark} \label{rem:nu*n}
       For later purpose we note that $\de\nu^{(n)}=f_n\de s$, where $f_n\geq 0$, $f_n$ is symmetric about each coordinate axis, $\int f_n=1$, and if $t_1\geq 2$, then $f_n\geq 4^{-d}\mathbf{1}_{I^d}$; that is, $\nu^{(1)}\leq 4^d\nu^{(n)}$ for all $n$.  
    \end{remark}
        
        Take $l_1=\frac{2b}{1-Ce^{-\theta t}}$. Let $l\geq l_1$ and choose smallest $n\in\N$ such that 
        \[
        C^ne^{-n\theta t}\alpha_\eps^{\theta}(x)\leq l/2. 
        \]
        So, we choose
        $n=\left\lceil \frac{\log^+ (2l^{-1}\alpha_\eps^\theta(x))}{\theta t-\log C} \right\rceil$.
        Then, the right hand side of \eqref{eq:exponential contraction} is at most $l/2+l_1/2\leq l$. Hence, by \eqref{eq:exponential contraction}, there exist 
        \[
        s\in {(1-e^{-t})}^{-1}I^d\subset [-1,1]^d
        \]
        such that $\alpha_\eps^{\theta}(c_{nt}u(s)x)\leq l$.  Now, $t=t_1$ and
        \begin{align*}
            nt&\leq t+\frac{t}{\theta t-\log C}\cdot \log^+ (2l^{-1}\alpha_\eps^\theta(x)) \\
            &=t_1+\theta^{-1}\cdot \frac{\theta}{\theta-t_1^{-1}\log C}\cdot \log^+ (2l^{-1}\alpha_\eps^\theta(x))\\
            &\leq t_1+\theta^{-1}\sigma_0\log^+ (2l^{-1}\alpha_\eps^\theta(x)).
        \end{align*}
       We conclude the proof of the lemma by choosing $\tau=nt_1$.
    \end{proof}

    \begin{lem} \label{lem:size of r_x}
         Let $\sigma>1$. We can pick an open neighborhood $O$ of the 0 in $\fs$ and an $0<\eps_0<r_0^d$ such that for any $x\in X$, if we set 
    \begin{equation} \label{eq:r_x}
        r_x\defeq \eps_0\left[\alpha_\eps^{\frac{d}{d+1}\sigma}(x)\right]^{-1},
    \end{equation}
    then for every $x\in X'$, there exists at most one $v\in\mathfrak{q}$ and $w\in\mathfrak{p}$ such that
    \begin{equation}\label{eq:unique_representative}
        \exp(w)\exp(v)x\in \exp(O)Y\quad\text{  and  }\quad\norm{v}<r_x,\norm{w}<(r_x)^{\frac{1}{d}}.
    \end{equation}
    \end{lem}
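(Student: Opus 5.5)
Suppose two pairs $(v_1,w_1)$ and $(v_2,w_2)$ satisfy \eqref{eq:unique_representative}; the plan is to show $v_1=v_2$ and $w_1=w_2$. Write $\exp(w_j)\exp(v_j)x=\exp(z_j)y_j$ with $z_j\in O$ and $y_j\in Y$. Eliminating $x$ gives
\[
\exp(-v_2)\exp(-w_2)\exp(z_2)y_2=\exp(-v_1)\exp(-w_1)\exp(z_1)y_1 .
\]
Choose $g\in G$ with $y_1=g x_0$ and $y_2=gm\gamma x_0$ for some $m\in M_1$ conjugated appropriately. Then there is $\gamma\in g\Gamma g^{-1}$ with
\[
\psi(v_2,w_2,z_2,m')\,\gamma=\psi(v_1,w_1,z_1,e)
\]
for some $m'$ in $M_1$. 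The goal is to show that $\gamma$ lies in the stabilizer of $y_1$ in $M_1$ and is close to the identity. Once that holds, we may absorb $\gamma$ into the $M_1$-factor, and injectivity of $\psi$ on $\Psi_{r_0}$ (\Cref{rem:diffeo}) forces $v_1=v_2$, $w_1=w_2$, $z_1=z_2$.

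The key quantitative step controls the injectivity radius at $x$ and at $y_j$ in terms of $\alpha_\eps(x)$. The element $h=\exp(w_2)\exp(v_2)\exp(-v_1)\exp(-w_1)$ maps $x$ into a neighborhood of $x$. Its size is at most $\norm{v_1}+\norm{v_2}+\norm{w_1}+\norm{w_2}\ll r_x^{1/d}$. Since $\alpha_\eps^{\theta}$ dominates $\norm{\cdot}^{-c}$ of the shortest vectors of $x$ through the $k=1$ terms (after adjusting exponents), the injectivity radius at $x$ is $\gg \alpha_\eps^{\frac{d}{d+1}\sigma}(x)^{-1/d}$ up to constants. This is exactly where the choice $r_x=\eps_0\alpha_\eps^{\frac{d}{d+1}\sigma}(x)^{-1}$ and $\eps_0<r_0^d$ come from: $r_x^{1/d}$ stays below both $r_0$ and the injectivity radius at $x$. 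I would verify this via Mahler's criterion, using the form of $\varphi_\eps$ on primitive vectors.

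It remains to show that $\exp(O)Y$ meets a small neighbourhood of $x$ in essentially one local sheet. I would use the fact that $Y=M_1y$ is a closed orbit and that $\fs\oplus\Lie(M_1)$ is a complement to $\fq\oplus\fp$ invariant under $H_d$. Take $O$ small enough that $\exp(O)$ lies in $Z_G(H_d)$ near $e$. Two points $\exp(z_1)y_1$ and $\exp(z_2)y_2$ that are $\ll r_x^{1/d}$-close must then differ by an element of $\Psi_{r_0}$ with trivial $\fq,\fp$ components, modulo the stabilizer.

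I expect the main obstacle to be uniformity: $Y$ may come back close to itself, and the injectivity radius along $Y$ also deteriorates near the cusp. I would handle this by using the same bound at $x$: the injectivity radius at $x$ controls that at the nearby points $y_j$, since they differ from $x$ by elements of size $\ll r_0$. This makes the local sheet structure valid within radius $r_x^{1/d}$. Shrinking $O$ and $\eps_0$ once, independently of $x$, then completes the argument.
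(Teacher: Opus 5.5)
There is a genuine gap at your key quantitative step. You claim the injectivity radius at $x$ is $\gg \alpha_\eps^{\frac{d}{d+1}\sigma}(x)^{-1/d}$, but this is false in general. Take $n=2$, $d=1$, and let $x$ have an orthogonal reduced basis $v_1,v_2,v_3$ of lengths $\eta,\eta,\eta^{-2}$, in general position with respect to $V_0$.

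\begin{itemize}
\item The integral $2$-vector $v_1\wedge v_2$ dominates in \eqref{eq:alpha_eps^theta}. So $\alpha_\eps(x)\asymp\eta^{-4}$, and hence $r_x\asymp\eps_0\eta^{2\sigma}$.
\item The element fixing $v_1,v_2$ and sending $v_3\mapsto v_3+v_1$ stabilizes $x$ and lies within $\ll\eta^{3}$ of the identity.
\end{itemize}

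For $1<\sigma<3/2$ these two facts contradict your bound. This range matters, since $\sigma$ is later taken close to $1$ by \eqref{eq:choice of sigma}.

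More fundamentally, no injectivity-radius bound at $x$ would suffice. Two solutions give points $y_1,y_2\in Y$ related by an element $m'\in M_1$ that need not be small. They may lie on different local sheets (different cosets $M_1pg_1\gamma$) of the closed orbit $Y$, and high in the cusp such sheets can come far closer than the injectivity radius of $X$. You name this obstacle in your last paragraph, but ``the same bound at $x$'' does not control the separation of sheets of $Y$. So you never reach a situation where injectivity of $\psi$ on $\Psi_{r_0}$ (\Cref{rem:diffeo}) applies.

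The missing idea is dynamical rather than geometric.

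\begin{enumerate}
\item Uniqueness holds with a fixed radius $r$ for all $x'$ in the compact set $X_l=\{\alpha_\eps^\theta\le l\}$. This uses only that $M_1y$ is closed, compactness, and \Cref{rem:diffeo}, and it is where $O=\fs\cap B_r$ is chosen.
\item For general $x\in X'$, \Cref{lem:back to compact} gives $c_\tau u(s)\in H_d$ with $s\in[-1,1]^d$, $\tau\le t_1+\sigma_0\theta^{-1}\log^+(2l^{-1}\alpha_\eps^\theta(x))$ and $c_\tau u(s)x\in X_l$.
\item $H_d$ preserves $\fq$, $\fp$ and $\exp(O)Y$. So a solution $(v,w)$ at $x$ is carried to the solution $(\Ad(c_\tau u(s))v,\Ad(c_\tau u(s))w)$ at $c_\tau u(s)x$.
\item The expansion on $\fq$ is at most $e^{\frac{d}{d+1}\tau}\ll\alpha_\eps^{\frac{d}{d+1}\sigma}(x)$. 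The expansion on $\fp$ is at most $e^{\frac{1}{d+1}\tau}\ll\alpha_\eps^{\frac{1}{d+1}\sigma}(x)$.
\item Hence $\norm{v}<r_x$ and $\norm{w}<r_x^{1/d}$ become $\ll\eps_0$ and $\ll\eps_0^{1/d}$, which are below $r$ once $\eps_0$ is small. Uniqueness at $c_\tau u(s)x$ then gives uniqueness at $x$.
\end{enumerate}

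This is exactly why $r_x$ carries the exponent $\frac{d}{d+1}\sigma$ and why the $\fp$-bound is $r_x^{1/d}$: they match the $c_t$-expansion rates on $\fq$ and $\fp$ from \Cref{rem:Hd on p+q}. By bounding both components by the single scale $r_x^{1/d}$, you discard precisely this anisotropy.
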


    \begin{proof}
        This follows from \Cref{lem:Lipschitz in q}, \Cref{lem:back to compact} and \cite[Lemma 6.6]{PSS23}; c.f. the claim before \cite[eq.(6.15)]{PSS23}. We give a detailed proof below.

        Pick $\sigma_0$ such that $1<\sigma_0<\sigma$, and let $t_1$ and $l_1$ as in \Cref{lem:back to compact}. Pick any $l\geq l_1$. Now $X_l$ is compact, and hence the set $\tilde{X_l}=\overline{\Psi_{1}}X_l$ is compact, where $\Psi_r$ is as in \Cref{rem:diffeo}. 

        Since $M_1y$ is closed in $X$, there exists $0<r_1<r_0$ such that for any $g\in \Psi_{r_1}$, if $g(\tilde X_l\cap M_1y)\cap M_1y\neq \emptyset$, then $g\in M_1$. 
        
        Let $0<r<r_0$ be such that  $\Psi_{r}^{-1} \Psi_{r}\cap \Stab(y)=\{e\}$ for all $y\in Y$ and $(\Psi_r^{-1}\Psi_r)\subset \Psi_{r_1}$ and $\Psi_r^{-1}\subset \Psi_1$.

    Let $O=\fs\cap B_r$. 
       Then for any $x'\in X_l$, there exists at most one $v'\in\mathfrak{q}$ and $w'\in\mathfrak{p}$ such that
    \begin{equation}\label{eq:unique_representative1}
        \exp(w')\exp(v')x'\in \exp(O)Y\quad\text{  and  }\quad\norm{v'}<r,\norm{w'}<r.
    \end{equation}
Indeed, suppose $\exp(w_i)\exp(v_i)x'=\exp(z_i)m_iy$ for some $v_i\in\fq\cap B_r$, $w_i\in\fp\cap B_r$, and $z_i\in\fs\cap B_r$ and $m_i\in M_1$ for $i=1,2$. Then $m_iy\in \Psi_r^{-1}x'\subset\Psi_1X_l$. Let 
\[
g=(\exp(-v_2)\exp(-w_2)\exp(z_2))^{-1}\exp(-v_1)\exp(-w_1)\exp(z_1)\in \Psi_{r}^{-1}\Psi_r\subset\Psi_{r_1}.
\]
Then, $g(m_1y)=m_2y$, and hence $g\in M_1$ by our choice of $r_1$ above. So, 
\[
g\in M_1\cap \Psi_{r_1}\subset\exp(\Lie(M_1)\cap B_{r_1}).
\]
Then 
\[
\exp(-v_2)\exp(-w_2)\exp(z_2)g=\exp(-v_1)\exp(-w_1)\exp(z_1).
\]
Therefore, by \Cref{rem:diffeo}, we get $v_1=v_2$ and $w_1=w_2$. This proves there is at most one solution to \eqref{eq:unique_representative1}.
    
    Let $x\in X'$. By \Cref{lem:back to compact}, there exists $\tau\leq t_1+\sigma_0\theta^{-1}\log (2l^{-1}\alpha_\eps^\theta(x))$ and $s\in\R^d$ with $\norm{s}_{\infty}\leq 1$ 
    such that
        \begin{equation} \label{eq:inXl}
        c_{\tau}u(s)x \in X_l.  
        \end{equation}
    Then by \Cref{lem:Lipschitz in q}, for all $v\in\mathfrak{q}$, we have
    \begin{align} 
        \norm{\Ad(c_\tau u(s))v}
        &\leq e^{\frac{d}{d+1}\tau}\cdot \norm{u(s)}\norm{v} 
        \notag\\
        &\leq e^{\frac{d}{d+1}t_1} (2\ell^{-1})^{\frac{d}{d+1}\sigma_0\theta^{-1}} \alpha_\eps^{\frac{d}{d+1}\sigma_0}(x)\cdot \norm{u(s)}\norm{v} \notag\\
        &\ll \alpha_\eps^{\frac{d}{d+1}\sigma}(x)\norm{v},\label{eq:expansion v}
        \end{align}
        and for all $w\in\mathfrak{p}$, we have
        \begin{align}
    \norm{\Ad(c_\tau u(s))w} 
        \leq e^{\frac{1}{d+1}\tau}\norm{u(s)}\norm{w}
        \ll \alpha_\eps^{\frac{1}{d+1}\sigma}(x)\norm{w}.\label{eq:expansion w}
    \end{align}
    
    For $\eps_0>0$, let $r_x$ be as in \eqref{eq:r_x}. Now suppose $v\in\fq$ and $w\in\fp$ satisfy \eqref{eq:unique_representative}. Then, 
    \begin{align} 
   &c_\tau u(s)\exp(w)\exp(v)x \notag\\
   &=\exp(\Ad(c_\tau u(s))w)\exp(\Ad(c_\tau u(s))v)(c_\tau u(s))x\in \exp(O)Y,
   \label{eq:inexpOY}
    \end{align}
    because $c_\tau u(s)\exp(O)Y=\exp(O)Y$. 
    By \eqref{eq:r_x}, \eqref{eq:unique_representative}, \eqref{eq:expansion v} and \eqref{eq:expansion w}, 
    \[
    \norm{\Ad(c_\tau u(s))v}\ll \eps_0<r \text{ and } \norm{\Ad(c_\tau u(s))w}\ll \eps_0^{\frac{1}{d}}<r,
    \]
    where we pick $\eps_0>0$ sufficiently small depending only on the implicit constants in \eqref{eq:expansion v} and \eqref{eq:expansion w} and $r$. Then, by \eqref{eq:unique_representative1}, in view of \eqref{eq:inXl} and \eqref{eq:inexpOY}, we conclude that the $v$ and $w$ are unique.
    \end{proof}

    \subsubsection*{Choices of parameters}
    We have picked $\theta$ and $\theta'$ to satisfy \eqref{eq:theta and theta'}. Now we pick $\sigma>1$ such that
    \begin{equation} \label{eq:choice of sigma}
        \frac{d}{d+1}\theta'<\sigma \frac{d}{d+1}\theta'< \theta <\frac{d}{d+1}.
    \end{equation}
    For this choice of $\sigma>1$, we pick $\sigma_0$ such that $1<\sigma_0<\sigma$, and let $t_1$ and $l_1$ as in \Cref{lem:back to compact}. Further we pick any $l\geq l_1$, which we shall specify later, we obtain $O$ and $r_x$ such that the conclusion of  \Cref{lem:size of r_x} holds. We set 
    \begin{equation}
        \label{eq:cN-def}
        \cN=\exp(O)Y=\exp(O)M_1y.
    \end{equation}

    \subsubsection{Height function for \texorpdfstring{$\cN$}{\unichar{U+1D4A9}} with \texorpdfstring{$c_tu(I^d)$}{c\textsubscript t u(I\textsuperscript d)}-contraction}
     Define $\beta^{\theta'}:X\to[1,\infty]$ by
	\begin{align} \label{eq:defbeta}
	\beta^{\theta'}(x)&=\begin{cases}
	\min\{\norm{v}^{-\theta'},\norm{w}^{-d\theta'}\} &\text{if }\exists!\, (v,w)\in \fq\times\fp \text{ satisfying } \eqref{eq:unique_representative} \\ 
	r_x^{-\theta'} &\text{otherwise.}
	\end{cases}
	\end{align}

   \begin{remark} \label{rem:beta-larger}
For any $x\in X$,
\[
\beta^{\theta'}(x)=\max\bigl(\{r_x^{-\theta'}\}\bigcup\bigl\{\min\{\norm{v}^{-\theta'},\norm{w}^{-d\theta'}\}:v\in \fq, w\in\fp, \exp(w)\exp(v)x\in\cN\bigr\}\bigr). \label{eq:beta-min}
\]

To see this, let $x\in X$, and $(v,w)\in \fq\times\fp$. Then, \eqref{eq:unique_representative} is equivalent to 
\begin{equation} \label{eq:equiv-unique-rep}
\exp(w)\exp(v)x\in \cN \text{ and } r_x^{-\theta'}<\min\{\norm{v}^{-\theta'},\norm{w}^{-d\theta'}\}.
\end{equation}
 Therefore,   
       $r_x^{-\theta'}\leq \beta^{\theta'}(x)$. 
       
       Moreover, if $\exp(w)\exp(v)x\in \cN$, then we claim that
       \begin{equation} \label{eq:beta-geq-norminv}
           \beta^{\theta'}(x)\geq \min\{\norm{v}^{-\theta'},\norm{w}^{-d\theta'}\}. 
       \end{equation}
       To show this by contradiction, suppose \eqref{eq:beta-geq-norminv} fails to hold, then \eqref{eq:equiv-unique-rep} holds. Hence \eqref{eq:unique_representative} hold. In particular, $r_x^{-\theta'}<\infty$, so $x\in X'$. And by \Cref{lem:size of r_x}, there exists at most one pair $(v,w)\in\fq\times\fp$ satisfying \eqref{eq:unique_representative}. So \eqref{eq:beta-geq-norminv} holds with equality, which is a contradiction. 
\end{remark}

    \begin{lem} \label{lem:Lipschitz for beta}
        There exists $C_2\geq1$ such that given any $x\in X'$, $t\geq0$ and $s\in I^d$,
            \begin{equation} \label{eq:Lipschitz beta}
                \beta^{\theta'}(c_tu(s)x)\leq C_2e^{\frac{\sigma d^2\theta'}{d+1}t}\beta^{\theta'}(x).
            \end{equation}
    \end{lem}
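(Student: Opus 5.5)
We use the characterization of $\beta^{\theta'}$ in \Cref{rem:beta-larger} as a maximum over two kinds of terms. Set $g=c_tu(s)$ and $x'=gx$. The goal is to bound every term in the maximum defining $\beta^{\theta'}(x')$ by $C_2e^{\frac{\sigma d^2\theta'}{d+1}t}\beta^{\theta'}(x)$. There are two kinds of terms: the ``default'' term $r_{x'}^{-\theta'}$, and the terms $\min\{\norm{v'}^{-\theta'},\norm{w'}^{-d\theta'}\}$ coming from pairs $(v',w')\in\fq\times\fp$ with $\exp(w')\exp(v')x'\in\cN$.

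\emph{The default term.} By \eqref{eq:r_x}, $r_{x'}^{-\theta'}=\eps_0^{-\theta'}\alpha_\eps^{\frac{d}{d+1}\sigma\theta'}(x')$. By \Cref{lem:log Lipschitz}, applied with exponent $\frac{d}{d+1}\sigma\theta'$ in place of $\theta$, we have $\alpha_\eps^{\frac{d}{d+1}\sigma\theta'}(c_tu(s)x)\leq C_1e^{d\cdot\frac{d}{d+1}\sigma\theta' t}\alpha_\eps^{\frac{d}{d+1}\sigma\theta'}(x)$. Hence
\[
r_{x'}^{-\theta'}\leq C_1e^{\frac{\sigma d^2\theta'}{d+1}t}r_x^{-\theta'}\leq C_1e^{\frac{\sigma d^2\theta'}{d+1}t}\beta^{\theta'}(x).
\]
This is exactly where the exponent in \eqref{eq:Lipschitz beta} comes from.

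\emph{The terms from pairs.} Suppose $\exp(w')\exp(v')x'\in\cN$. Pull back by $g$: set $v=\Ad(g^{-1})v'\in\fq$ and $w=\Ad(g^{-1})w'\in\fp$. These lie in $\fq$ and $\fp$ because both are $\Ad(H_d)$-invariant and $g\in H_d$. Then
\[
\exp(w)\exp(v)x=g^{-1}\exp(w')\exp(v')gx\in g^{-1}\cN.
\]
We also need $g^{-1}\cN=\cN$. This holds because $H_d\subset M_1$, so $g^{-1}Y=Y$, and because $\fs\subset\Lie Z_G(H_d)$, so $g^{-1}\exp(O)g=\exp(O)$. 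By \eqref{eq:beta-geq-norminv}, $\beta^{\theta'}(x)\geq\min\{\norm{v}^{-\theta'},\norm{w}^{-d\theta'}\}$.

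It remains to compare norms using \Cref{lem:Lipschitz in q}, with $\norm{u(s)^{\pm1}}\ll1$ since $s\in I^d$. The lower bounds give $\norm{v'}\gg e^{-\frac{1}{d+1}t}\norm{v}$ and $\norm{w'}\gg e^{-\frac{d}{d+1}t}\norm{w}$. Therefore
\[
\norm{v'}^{-\theta'}\ll e^{\frac{\theta'}{d+1}t}\norm{v}^{-\theta'}, \qquad \norm{w'}^{-d\theta'}\ll e^{\frac{d^2\theta'}{d+1}t}\norm{w}^{-d\theta'}.
\]
Both exponents are at most $\frac{\sigma d^2\theta'}{d+1}$ because $\sigma>1$ and $d\geq1$. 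Taking the minimum, the term for $(v',w')$ is $\ll e^{\frac{\sigma d^2\theta'}{d+1}t}\min\{\norm{v}^{-\theta'},\norm{w}^{-d\theta'}\}\leq e^{\frac{\sigma d^2\theta'}{d+1}t}\beta^{\theta'}(x)$.

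Taking the maximum over both kinds of terms gives \eqref{eq:Lipschitz beta}, with $C_2$ the larger of $C_1$ and the implied constants. The only delicate point is the invariance $g^{-1}\cN=\cN$. It relies on the $\Ad(H_d)$-equivariance built into the decomposition \eqref{eq:g=zmpq}, and on the fact that $O$, a ball in $\fs$, is fixed pointwise by $\Ad(H_d)$ because $\fs\subset\Lie Z_G(H_d)$. The uniqueness clause in \eqref{eq:defbeta} causes no trouble, since \Cref{rem:beta-larger} replaces it with the max formula.
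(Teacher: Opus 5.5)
Your proof is correct and follows essentially the same route as the paper. The default term is controlled by applying Lemma~\ref{lem:log Lipschitz} to $r_x^{-\theta'}$, and the pair terms by pulling $(v',w')$ back through $\Ad(c_tu(s))^{-1}$, using $H_d\cN=\cN$, the lower bounds of Lemma~\ref{lem:Lipschitz in q}, and \eqref{eq:beta-geq-norminv}. The only cosmetic difference is that you bound every term in the max formula of Remark~\ref{rem:beta-larger}, whereas the paper splits into cases according to which term realizes $\beta^{\theta'}(c_tu(s)x)$; your version avoids invoking uniqueness of the representative.
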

    \begin{proof}
        By \Cref{lem:Lipschitz in q}, there exists $C_3\geq1$ such that for all $t\geq 0$, $s\in I^d$, $v\in\mathfrak{q}$ and $w\in\mathfrak{p}$, we have
        \begin{equation} \label{eq:beta Lipschitz}
            C_3^{-1}e^{-\frac{d\theta'}{d+1}t}\norm{v}^{-\theta'}\leq \norm{c_tu(s)v}^{-\theta'} \leq C_3e^{\frac{\theta'}{d+1}t}\norm{v}^{-\theta'},
        \end{equation}
        and
        \begin{equation} \label{eq:beta Lipschitz 2}
            C_3^{-1}e^{-\frac{d\theta'}{d+1}t}\norm{w}^{-d\theta'}\leq \norm{c_tu(s)w}^{-d\theta'} \leq C_3e^{\frac{d^2\theta'}{d+1}t}\norm{w}^{-d\theta'}.
        \end{equation}
        
        We observe that, by \eqref{eq:r_x} and then by \Cref{lem:log Lipschitz}, 
        \begin{equation}\label{eq:r_x Lipschitz}
            r_{c_tu(s)x}^{-\theta'}\ll \left(\alpha_\eps^{\frac{d}{d+1}\sigma}(c_tu(s)x)\right)^{\theta'}
            \ll \left(e^{d\frac{d}{d+1}\sigma t}\alpha_\eps^{\frac{d}{d+1}\sigma}(x)\right)^{\theta'}\ll e^{\frac{\sigma d^2\theta'}{d+1}t}r^{-\theta'}_x.
        \end{equation}
        
       Suppose first that $\beta^{\theta'}(c_tu(s)x)=r^{-\theta'}_{c_tu(s)x}$. By \eqref{eq:r_x Lipschitz} and \Cref{rem:beta-larger}, we have
        \begin{equation} \label{eq:beta case 1}
            \beta^{\theta'}(c_tu(s)x)=r^{-\theta'}_{c_tu(s)x}\ll e^{\frac{\sigma d^2\theta'}{d+1}t}r^{-\theta'}_x\leq e^{\frac{\sigma d^2\theta'}{d+1}t}\beta^{\theta'}(x).
        \end{equation}

        Otherwise, we have that 
        \[
        \beta^{\theta'}(c_tu(s)x)=\min\{\norm{v'}^{-\theta'},\norm{w'}^{-d\theta'}\}>r_{c_tu(s)x}^{-\theta'}
        \]
        for unique $v'\in\mathfrak{q}$ and $w'\in\mathfrak{p}$ such that $\exp(w')\exp(v')c_tu(s)x\in\cN$. Let 
        \[
        v=\Ad(c_tu(s))^{-1}v'\in \mathfrak{q} \text{ and } w=\Ad(c_tu(s))^{-1}w'\in\mathfrak{p}.
        \]
        Then, $c_tu(s)\exp(w)\exp(v)x\in\cN$. Since $H_d\cN=\cN$, we have $\exp(w)\exp(v) x\in \cN$. 
        Then by \eqref{eq:beta Lipschitz},  \eqref{eq:beta Lipschitz 2}, and \Cref{rem:beta-larger}, we have
        \begin{equation} \label{eq:beta case 2}
        \begin{split}
            \beta^{\theta'}(c_tu(s)x) &=\min\{\norm{v'}^{-\theta'},\norm{w'}^{-d\theta'}\} \\
            &\ll \min\{e^{\frac{\theta'}{d+1}t}\norm{v}^{-\theta'},e^{\frac{d^2\theta'}{d+1}t}\norm{w}^{-d\theta'}\}\\
            &\leq e^{\frac{d^2\theta'}{d+1}t}\min\{\norm{v}^{-\theta'},\norm{w}^{-d\theta'}\}\\
            &\leq e^{\frac{d^2\theta'}{d+1}t}\beta^{\theta'}(x),
        \end{split}    
        \end{equation}
       
        Combining \eqref{eq:beta case 1} and \eqref{eq:beta case 2}, we have
        \begin{equation*}
            \beta^{\theta'}(c_tu(s)x)\ll e^{\frac{\sigma d^2\theta'}{d+1}t}\beta^{\theta'}(x),
        \end{equation*}
        where the implied constant is independent of $t\geq0$ and $s\in I^d$.
    \end{proof}

    Let $l_1$ be as in \Cref{lem:back to compact}. Let $l\geq l_1$. We note that our choices of $\cN$ and the function $r_x^{\theta'}$ as in \Cref{lem:size of r_x} depend on $l$. 

    Let $\beta_\cN:X\to[1,\infty]$ be defined by 
    \begin{equation}\label{eq:definition of beta_N}
        \beta_\cN(x)=\alpha_\eps^{\theta}(x)+\beta^{\theta'}(x).
    \end{equation}
    
	\begin{prop} \label{prop:properties of beta}
	    The function $\bN$ satisfies the following properties:
        \begin{enumerate}
            \item  \label{itm:bN-lowersemi} $\bN$ is lower semicontinuous.
            \item \label{itm:bN-infty} For any $x\in X'$, $\bN(x)=\infty$ if and only if $x\in\cN$.
            \item \label{itm:bN-Hd-Lip} $\bN$ is Lipschitz with respect to the $H_d$-action; that is, for any compact set $\cC\subset H_d$, there exists $C>0$ such that $\bN(g x)\leq C\bN(x)$ for all $g\in \cC$ and $x\in X$.

            \item \label{itm:bN-contract} There exists $E\geq1$ such that for any $t\geq 0$ there exists $B_1(t)\geq 0$ such that for any $x\in X$ we have
                \begin{equation}\label{eq:contraction_beta}
		              \int_{I^d}\bN(c_{t}u(s)x)\de s \leq Ee^{-\frac{d}{d+1}\theta't}\bN(x)+B_1(t).
                \end{equation}

        \item \label{itm:bN-proper} For any $l'\geq 1$, the set $\bN^{-1}([1,l'])$ is compact. 
        \end{enumerate}
        
	\end{prop}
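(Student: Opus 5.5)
Since $\bN=\alpha_\eps^\theta+\beta^{\theta'}$ and the properties of $\alpha_\eps^\theta$ are already recorded (lower semicontinuity and properness from \cite[Lemma 4.1]{Shi20}, $H_d$-Lipschitz from \Cref{rem:alpha-eps}, contraction from \Cref{lem:contraction_alpha}), the proof reduces mostly to verifying each property for $\beta^{\theta'}$, using the description in \Cref{rem:beta-larger}. We go item by item.

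For (\ref{itm:bN-lowersemi}), $x\mapsto r_x^{-\theta'}$ is lower semicontinuous, being a positive power of $\alpha_\eps$. For the other term in \Cref{rem:beta-larger}, take $x_i\to x$ with $\beta^{\theta'}(x_i)\le M$. If $\beta^{\theta'}(x)>M$ is attained by a pair $(v,w)$ with $\exp(w)\exp(v)x\in\cN$, we want nearby pairs $(v_i,w_i)$ with $\exp(w_i)\exp(v_i)x_i\in\cN$ and norms close to those of $(v,w)$. Since $\cN$ is not closed, we first shrink $O$ to a relatively compact ball and work with its closure; the local product structure of \Cref{rem:diffeo} then provides the transversal coordinates $(v,w)$ near points of $Y$. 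This gives the contradiction.

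For (\ref{itm:bN-infty}), if $x\in\cN\cap X'$ then $(v,w)=(0,0)$ satisfies \eqref{eq:unique_representative}, so $\beta^{\theta'}(x)=\infty$. Conversely, for $x\in X'$ we have $r_x>0$, so $\beta^{\theta'}(x)=\infty$ forces $v=w=0$, i.e. $x\in\cN$. For (\ref{itm:bN-Hd-Lip}), $\cN$ is $H_d$-invariant because $H_d\subset M_1$ and $H_d$ centralizes $\fs$. Moreover $\fq$ and $\fp$ are $\Ad(H_d)$-invariant, so for $g$ in a compact set the norms $\norm{\Ad(g)v}$ and $\norm{\Ad(g)w}$ are comparable to $\norm{v}$ and $\norm{w}$, and $r_{gx}\asymp r_x$. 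Applying \Cref{rem:beta-larger} to $gx$, with pairs $(\Ad(g)v,\Ad(g)w)$, gives $\beta^{\theta'}(gx)\ll\beta^{\theta'}(x)$. Item (\ref{itm:bN-proper}) follows from properness of $\alpha_\eps^\theta$, lower semicontinuity, and $\bN\ge\alpha_\eps^\theta$.

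The main work is the contraction (\ref{itm:bN-contract}). The $\alpha$-part contracts at rate $e^{-\theta t}\le e^{-\frac d{d+1}\theta' t}$ by \eqref{eq:theta and theta'}, so it suffices to bound $\int_{I^d}\beta^{\theta'}(c_tu(s)x)\,\de s$. We would first prove this for small fixed $t$, using a linear algebra estimate in the spirit of \cite{PSS23,Shi20}. By \Cref{rem:Hd on p+q}, $\fq$ is a sum of standard representations and $\fp$ a sum of copies of $\wedge^d\R^{d+1}$. The standard averaging lemma gives $\int_{I^d}\norm{c_tu(s)v}^{-\theta'}\de s\ll e^{-\frac d{d+1}\theta' t}\norm{v}^{-\theta'}$ for $\theta'<1$, the exponent coming from the top weight $\frac d{d+1}$. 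For $\fp$ the top weight is $\frac1{d+1}$; hence the power $d\theta'$ in $\norm{w}^{-d\theta'}$, which gives $\int\norm{c_tu(s)w}^{-d\theta'}\de s\ll e^{-\frac d{d+1}\theta' t}\norm{w}^{-d\theta'}$, valid provided $d\theta'<d$, i.e. $\theta'<1$. The minimum of the two is then handled termwise.

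The obstacle is the case split in \eqref{eq:defbeta}. If $\beta^{\theta'}(c_tu(s)x)$ is given by a unique pair $(v',w')$, we pull it back, as in \Cref{lem:Lipschitz for beta}, to a pair $(v,w)$ for $x$ and bound by $\beta^{\theta'}(x)$ through \Cref{rem:beta-larger}. The trouble is that the pair realizing $\beta^{\theta'}(x)$ could differ from the pulled-back pair; uniqueness fixes this once $\norm{v},\norm{w}$ are small relative to $r_x$. Otherwise the value is $r_{c_tu(s)x}^{-\theta'}\ll\alpha_\eps^{\frac d{d+1}\sigma\theta'}(c_tu(s)x)$. By \eqref{eq:choice of sigma} we have $\frac d{d+1}\sigma\theta'<\theta$, so Hölder's inequality and \Cref{lem:contraction_alpha} make this term at most $Ce^{-\frac d{d+1}\sigma\theta' t}\alpha_\eps^{\theta}(x)^{\sigma\theta' d/((d+1)\theta)}+b$, which is absorbed into $\bN(x)$ plus a constant. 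For general $t$, we iterate the fixed-time inequality using the convolution structure of \Cref{lem:back to compact}, \Cref{rem:nu*n} and the Lipschitz bound \Cref{lem:Lipschitz for beta}, following \cite[Lemma 4.12]{Shi20}.
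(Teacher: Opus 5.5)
Items (\ref{itm:bN-lowersemi}), (\ref{itm:bN-infty}), (\ref{itm:bN-Hd-Lip}) and (\ref{itm:bN-proper}) go essentially as in the paper. One correction for (\ref{itm:bN-lowersemi}): do not pass to the closure of $O$. The argument needs $z_i\to z\in O$ to force $z_i\in O$ for large $i$, so it uses that $O$ is open, and changing $O$ changes $\bN$.

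The genuine gap is in (\ref{itm:bN-contract}). You split according to $s$. For those $s$ where $\beta^{\theta'}(c_tu(s)x)$ is realized by a pair $(v'_s,w'_s)$, you pull back to $(v_s,w_s)=\Ad(c_tu(s))^{-1}(v'_s,w'_s)$. The linear averaging estimate, however, requires one pair $(v,w)$ independent of $s$, and nothing guarantees this. From $\norm{v'_s}<r_{c_tu(s)x}$, \Cref{lem:Lipschitz in q} and \Cref{lem:log Lipschitz} you only get $\norm{v_s}\ll e^{ct}r_x$ for some $c>0$. That does not put $v_s$ in the window $\norm{v}<r_x$ where \Cref{lem:size of r_x} gives uniqueness. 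So different $s$ can select different pairs for $x$. Your sentence ``uniqueness fixes this once $\norm{v},\norm{w}$ are small relative to $r_x$'' assumes exactly the smallness that is missing. When $x$ has no such small pair, the only bound you have on the pair-valued part is \Cref{lem:Lipschitz for beta}. That gives growth $e^{\frac{\sigma d^2\theta'}{d+1}t}\beta^{\theta'}(x)$, not decay.

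The passage to general $t$ also fails. Iterating a fixed-time inequality with constant $E$ gives $E^n e^{-\frac{d}{d+1}\theta' n t_*}$. With a uniform constant this is only the rate $\frac{d}{d+1}\theta'-\frac{\log E}{t_*}$, whereas the statement requires one $E$ for all $t$ at rate exactly $\frac{d}{d+1}\theta'$. Repairing the fixed-time step by enlarging the uniqueness window would mean changing $\eps_0$, hence $r_x$, hence $\beta^{\theta'}$ itself.

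The missing idea is to split on $x$ rather than on $s$, using $t$-dependent thresholds whose cost goes into $B_1(t)$ rather than into $E$. The paper does this in three cases.
\begin{enumerate}
\item Suppose $x$ has a pair $(v,w)$ with $\min\{\norm{v}^{-\theta'},\norm{w}^{-d\theta'}\}$ exceeding the $t$-dependent multiple of $r_x^{-\theta'}$ in \eqref{eq:sharp}. Then \eqref{eq:beta Lipschitz}, \eqref{eq:beta Lipschitz 2} and \eqref{eq:Lipschitz alpha} show that $(\Ad(c_tu(s))v,\Ad(c_tu(s))w)$ lies in the uniqueness window of $c_tu(s)x$ for \emph{every} $s\in I^d$. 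So one fixed pair realizes $\beta^{\theta'}(c_tu(s)x)$ for all $s$, and the averaging lemma applies.
\item If instead \eqref{eq:double sharp} holds, \Cref{lem:Lipschitz for beta} bounds $\beta^{\theta'}(c_tu(s)x)$ by $e^{(\theta-\frac{d\theta'}{d+1})t}\alpha_\eps^\theta(c_tu(s)x)$, whichever pair is selected. The contraction of $\alpha_\eps^\theta$ at the faster rate $\theta$ absorbs this factor.
\item Otherwise, since $\frac{\sigma d\theta'}{d+1}<\theta$, both $\alpha_\eps^\theta(x)$ and $\beta^{\theta'}(x)$ are $\ll e^{At}$. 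The whole integral is then bounded by a function of $t$ and goes into $B_1(t)$.
\end{enumerate}
This proves the inequality directly for each $t$ with $E$ independent of $t$, with no iteration. Your Jensen--H\"older treatment of the $r_{c_tu(s)x}^{-\theta'}$-valued part is fine, but it does not address the pair-valued part.
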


    \begin{proof}(Cf.~Proof of \cite[Theorem 6.4]{PSS23})
        
        Proof of (\ref{itm:bN-lowersemi}): By \cite[Lemma 4.1]{Shi20} we know that $\alpha^{\theta}$ is lower semicontinuous. So by \eqref{eq:r_x}, we have that  $x\mapsto r_x^{-\theta'}$ is lower semicontinuous.

        It remains to prove lower semicontinuity of $\beta^{\theta'}$. To prove this, let $x_i\to x$.  
        
        First suppose that $\beta^{\theta'}(x)=r_x^{-\theta'}$. So, 
        \[
        \liminf_{i\to\infty} \beta^{\theta'}(x_i)\geq \liminf_{i\to\infty} r_{x_i}^{-\theta'}\geq r_x^{-\theta'}=\beta^{\theta'}(x);
        \]
        where the first inequality holds because of \Cref{rem:beta-larger} and the second one holds due to lower semi-continuity of $y\mapsto r_y^{-\theta'}$. 

        Now suppose that $\beta^{\theta'}(x)>r_x^{-\theta'}$. Then there exists 
        $(v,w)\in \fq\times\fp$ such that  
        \[
        \exp(w)\exp(v)x\in\cN \text{ and } \beta^{\theta'}(x)=\min\{\norm{v}^{-\theta'},\norm{w}^{-d\theta'}\}>r_x^{-\theta'}.
        \]
        In particular, $\norm{v}<r_x<r_0$ and $\norm{w}<r_x^{1/d}<r_0$. 
        
        Let $g_i\to e$ such that $x_i=g_ix$. Then 
        \[
        x_i\in g_i\exp(-v)\exp(-w)\cN=g_i\exp(-v)\exp(-w)\exp(z)M_1y. 
        \]
        where $z\in \fs\cap B_r$. Therefore, by \Cref{rem:diffeo}, for all large $i$, there exist $v_i\to v$ in $\fq$, $w_i\to w$ in $\fp$, and $z_i\to z$ in $\fz$ such that 
        \[
        g_i\exp(-v)\exp(-w)\exp(z)M_1=\exp(-v_i)\exp(-w_i)\exp(z_i)M_1,
        \]
        and hence 
        $x_i\in\exp(-v_i)\exp(-w_i)\exp(O)M_1y$.

        Therefore, by \Cref{rem:beta-larger}, we have $\beta^{\theta'}(x_i)\geq \min\{\norm{v_i}^{-\theta'},\norm{w_i}^{-d\theta'}\}$. Hence,
        \[
        \liminf_{i\to\infty}\beta^{\theta'}(x_i)\geq \liminf_{i\to\infty}\min\{\norm{v_i}^{-\theta'},\norm{w_i}^{-d\theta'}\}=\min\{\norm{v}^{-\theta'},\norm{w}^{-d\theta'}\}=\beta^{\theta'}(x).
        \]
        Combining both cases, we proved that $\beta^{\theta'}$ is lower semicontinuous. 

        Proof of (\ref{itm:bN-infty}): By the definition of $X'$, for $x\in X'$ we have $\alpha^\theta(x)<\infty$, and it follows that $r_x<\infty$. Therefore, $\bN(x)=\infty$ if and only if $\exp(w)\exp(v)x\in\cN$ and $v=w=0$, that is, $x\in\cN$.
        
        Proof of (\ref{itm:bN-Hd-Lip}): By \cite[Lemma 4.1]{Shi20}, $X_l$ is compact and $\alpha^{\theta}$ is Lipschitz with respect to the action of $H_d$. It follows that $x\mapsto r_x^{-\theta'}$ is also Lipschitz with respect to the action of $H_d$. Since the action of $H_d$ on $\fq\times \fp$ by conjugation is Lipschitz, and $\cN$ is $H_d$-invariant, from definition it is straightforward to verify that $\beta^{\theta'}$ is also Lipschitz with respect to the action of $H_d$. Hence $\bN$ is Lipschitz with respect to the action of $H_d$.

        Proof of (\ref{itm:bN-contract}): Let $C_1\geq 1$, $C_2\geq 1$, $C_3\geq 1$, and $\eps_0>0$ be the constants in \eqref{eq:Lipschitz alpha}\eqref{eq:Lipschitz beta}\eqref{eq:beta Lipschitz}\eqref{eq:r_x}. Let $x\in X$ and $t\geq0$ be given.

        Firstly, suppose that there exists $v\in\mathfrak{q}$ and $w\in\mathfrak{p}$ such that $\exp(w)\exp(v)x\in\cN$ and
        \begin{equation} \label{eq:sharp}
            \min\{\norm{v}^{-\theta'},\norm{w}^{-d\theta'}\} > C_3e^{\left(\frac{d\theta'}{d+1}\right)t}\cdot C_1e^{d\left(\frac{\sigma d}{d+1}\theta'\right)t}\cdot \eps_0^{-\theta'}\alpha^{\left(\frac{\sigma d}{d+1}\theta'\right)}(x).
        \end{equation}
        Note that
        \begin{equation} \label{eq:temp3}
       C_3e^{\left(\frac{d\theta'}{d+1}\right)t}\cdot C_1e^{d\left(\frac{\sigma d}{d+1}\theta'\right)t}\cdot \eps_0^{-\theta'}\alpha^{\left(\frac{\sigma d}{d+1}\theta'\right)}(x)\geq \eps_0^{-\theta'}\alpha^{\frac{\sigma d\theta'}{d+1}}(x)=r_x^{-\theta'}.
        \end{equation} 
        Hence by the definition of $\beta^{\theta'}$, \eqref{eq:sharp}, \eqref{eq:temp3}, and \Cref{rem:beta-larger}, we have that 
        \begin{equation} \label{eq:sharp2}
        \beta^{\theta'}(x)=\min\{\norm{v}^{-\theta'},\norm{w}^{-d\theta'}\}.
        \end{equation}
        Then for every $s\in I^d$, we have
        \begin{equation*} \label{eq:temp1}
            \begin{split}
                &\min\{ \norm{c_tu(s)v}^{-\theta'}, \norm{c_tu(s)w}^{-d\theta'}\} \\
                &\geq C_3^{-1}e^{-\frac{d\theta'}{d+1}t}\min\{\norm{v}^{-\theta'},\norm{w}^{-d\theta'}\} \text{,  by \eqref{eq:beta Lipschitz} and \eqref{eq:beta Lipschitz 2},}\\ 
                &> C_1e^{d\left(\frac{\sigma d}{d+1}\theta'\right)t}\cdot
                \eps_0^{-\theta'}\alpha^{\left(\frac{\sigma d}{d+1}\theta'\right)}(x) \text{, by \eqref{eq:sharp},}\\
                &\geq \eps_0^{-\theta'}\alpha^{\left(\frac{\sigma d}{d+1}\theta'\right)}(c_tu(s)x) \text{, by \eqref{eq:Lipschitz alpha},}\\
                &=r_{c_tu(s)x}^{-\theta'} \text{, by \eqref{eq:r_x}. }
            \end{split}
        \end{equation*}
        Therefore, 
        \begin{equation} \label{eq:beta-val1}
        \beta^{\theta'}(c_tu(s)x)=\min\{ \norm{c_tu(s)v}^{-\theta'}, \norm{c_tu(s)w}^{-d\theta'}\}.
        \end{equation}
        
        Since $v\in\fq$ and $w\in \fp$, by \Cref{rem:Hd on p+q} and \cite[Lemma 3.1]{SY24TAMS},  we have
        \begin{equation} \label{eq:SY24PLMSLem3.1}
            \begin{split}
                \int_{I^d}\norm{c_tu(s)v}^{-\theta'}\de s &\ll e^{-\frac{d}{d+1}\theta't}\norm{v}^{-\theta'}  \quad\text{ and }\\
                \int_{I^d}\norm{c_tu(s)w}^{-d\theta'}\de s &\ll e^{-\frac{1}{d+1}(d\theta')t}\norm{w}^{-d\theta'}.
            \end{split}
        \end{equation}
        By \eqref{eq:sharp2}, \eqref{eq:beta-val1}, and \eqref{eq:SY24PLMSLem3.1} we have
        \begin{equation} \label{eq:contraction beta-theta'}
            \begin{split}
                \int_{I^d}\beta^{\theta'}(c_tu(s)x)\de s 
                &= \int_{I_d}\min\{\norm{c_tu(s)v}^{-\theta'},\norm{c_tu(s)w}^{-d\theta'}\}\de s \\ 
                &\leq \min\left\{\int_{I_d}\norm{c_tu(s)v}^{-\theta'}\de s, \int_{I_d}\norm{c_tu(s)w}^{-d\theta'}\de s\right\}\\
                &\ll e^{-\frac{d\theta'}{d+1}t}\min\{\norm{v}^{-\theta'},\norm{w}^{-d\theta'}\}=e^{-\frac{d\theta'}{d+1}t}\beta^{\theta'}(x).
            \end{split}
        \end{equation}
        On the other hand, by \Cref{lem:contraction_alpha},
        \begin{equation} \label{eq:contraction alpha 1}
            \int_{I^d}\alpha_\eps^{\theta}(c_tu(s)x)\de s\leq
		Ce^{-\theta t}\alpha_\eps^{\theta}(x)+b.
        \end{equation}
        Recall from \eqref{eq:theta and theta'} that $\frac{d\theta'}{d+1}<\theta$. Now \eqref{eq:contraction_beta} follows from \eqref{eq:definition of beta_N}, \eqref{eq:contraction beta-theta'}, and \eqref{eq:contraction alpha 1}.
        
        Secondly, suppose that
        \begin{equation} \label{eq:double sharp}
            \beta^{\theta'}(x)\leq C_2^{-1}e^{-\frac{\sigma d^2}{d+1}\theta't}\cdot C_1^{-1}e^{-\frac{d}{d+1}\theta't}\cdot \alpha^{\theta}(x).
        \end{equation}
        Then, for all $s\in I^d$ we have
        \begin{equation} \label{eq:bounding beta by alpha}
        \begin{split}
            \beta^{\theta'}(c_tu(s)x)&\leq C_2e^{\frac{\sigma d^2\theta'}{d+1}t}\beta^{\theta'}(x) \text{, by \eqref{eq:Lipschitz beta},}\\
            &\leq C_1^{-1}e^{-\frac{d\theta'}{d+1}t}\alpha^\theta(x) \text{, by \eqref{eq:double sharp},}\\
            &\leq e^{(\theta-\frac{d\theta'}{d+1})t}\alpha^\theta(c_tu(s)x)\text{, by \eqref{eq:Lipschitz alpha}.}
        \end{split}
        \end{equation}
        So,
        \[
        \begin{split}
            \int_{I^d}\beta_\cN(c_tu(s)x)\de s &=\int_{I^d}\alpha^\theta(c_tu(s)x)\de s +\int_{I^d}\beta^{\theta'}(c_tu(s)x)\de s \text{, by \eqref{eq:definition of beta_N},}\\
            &\leq(e^{(\theta-\frac{d\theta'}{d+1})t}+1)\int_{I^d}\alpha^\theta(c_tu(s)x)\de s \text{, by \eqref{eq:bounding beta by alpha},}\\
            &\ll e^{(\theta-\frac{d\theta'}{d+1})t}(e^{-\theta t}\alpha^\theta(x)+b) \text{, by \eqref{eq:contraction hypothesis alphaEpsTheta},}\\
            &< e^{-\frac{d\theta'}{d+1}t}\beta_\cN(x)+ be^{(\theta-\frac{d\theta'}{d+1})t}\text{, by \eqref{eq:definition of beta_N}.}
        \end{split}
        \]
        Hence \eqref{eq:contraction_beta} holds.

        Finally, suppose the negation of \eqref{eq:sharp} and the negation of \eqref{eq:double sharp} hold. Then,
        \begin{equation} \label{eq:temp2}
             C_1^{-1}C_2^{-1}e^{-\frac{\sigma d^2+d}{d+1}\theta't}\alpha^{\theta}(x) < \beta^{\theta'}(x) < \eps_0^{-\theta'}C_1C_3e^{\frac{\sigma d^2+d}{d+1}\theta't}\alpha^{\frac{\sigma d\theta'}{d+1}}(x).
        \end{equation}
        By \eqref{eq:choice of sigma} we have $\theta>\frac{\sigma d\theta'}{d+1}$. By the definition, we have $\alpha^\theta(x)\geq 1$. Hence \eqref{eq:temp2} implies that there exists $C_4>0$ such that 
        \[
        \alpha^\theta(x)<C_4e^{2\frac{\sigma d^2+d}{d+1}\theta'(1-\frac{\sigma d\theta'}{(d+1)\theta})^{-1}t}.
        \]
        Therefore, by \eqref{eq:Lipschitz alpha}, \eqref{eq:Lipschitz beta} and \eqref{eq:temp2}, there exist $C_5\geq 1$ and $A>0$, both independent of $t$, such that
        \begin{equation*}
            \int_{I^d}\beta_\cN(c_tu(s)x)\de s \leq C_5e^{At},
        \end{equation*}
        and this implies \eqref{eq:contraction_beta}.

        Thus,  \eqref{eq:contraction_beta} holds in all the three possible cases.

        Proof of (\ref{itm:bN-proper}): This follows because the inverse image of $[1,l']$ under $\alpha_\eps^\theta$ is compact, $\bN\geq \alpha_\eps^{\theta}$, and $\bN$ is lower semicontinuous. 
    \end{proof}

\subsubsection{Structure of $Z_G(F)$}
We have that $P_b^-=Z_G(\{b_t\})U_b^-$, see \eqref{eq:Pb-}, where 
    \begin{equation}
    \label{eq:U-}
    U^-_b=\{u\in G: \lim_{t\to\infty} b_tub_{-t}\to e\},
    \end{equation}
    and $U_b^-$ is an abelian group. 
 Moreover, $Z_G(\{b_t\})=H_dZ_G(H_d)$. 

\begin{lem} \label{lem:ZGF-Pb-}
   $Z_G(F)=Z_G(H_d)(U_b^-\cap Z_G(F))$. Moreover,
   \[
   Z_G(F)\cap U_b^-=(Z_G(F)\cap\exp(\fp))(Z_G(F)\cap U_b^-\cap M_1).
   \]
\end{lem}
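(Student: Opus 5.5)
The plan is to prove both identities by computing all the groups involved explicitly as matrix groups in the standard basis $e_0,\dots,e_n$, and then to factor a general element by hand.

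\emph{Step 1: compute $Z_G(F)$.} The centralizer of $\{a_t\}$ consists of the block-diagonal matrices $\operatorname{diag}(\lambda,B)$ with $\lambda\in\R^\ast$, $B\in\GL_n(\R)$ and $\lambda\det B=1$. Next we impose commutation with $u(s)$ for all $s\in\R^d$. A direct computation gives
\[
\operatorname{diag}(\lambda,B)\,u(s)\,\operatorname{diag}(\lambda,B)^{-1}
\]
equal to the unipotent matrix whose first row is $(1,\lambda(s,0)B^{-1})$. So the condition is $(s,0)B=\lambda(s,0)$ for all $s$. This says exactly that the first $d$ rows of $B$ are $\lambda(I_d\ 0)$. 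Hence
\[
Z_G(F)=\left\{\begin{psmallmatrix}\lambda & & \\ & \lambda I_d & \\ & C & D\end{psmallmatrix}: C\in\Mat_{n-d,d}(\R),\ D\in\GL_{n-d}(\R),\ \lambda^{d+1}\det D=1\right\}.
\]

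\emph{Step 2: compute $Z_G(H_d)$ and $U_b^-$.} Similarly, $Z_G(H_d)=\{\operatorname{diag}(\mu I_{d+1},D'):\mu^{d+1}\det D'=1\}$. From \eqref{eq:bt}, $b_t$ expands $V_0^\perp$ relative to $V_0$. Therefore $U_b^-$ is the abelian group of matrices $\begin{psmallmatrix} I_{d+1}&0\\ E& I_{n-d}\end{psmallmatrix}$ with $E\in\Mat_{n-d,d+1}(\R)$. Intersecting with Step 1, $Z_G(F)\cap U_b^-$ consists of the cases $\lambda=1$, $D=I$, and $E=(0\mid C)$, i.e.\ $E$ has vanishing first column.

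\emph{Step 3: the first identity.} Given an element of $Z_G(F)$ as in Step 1, factor it as
\[
\operatorname{diag}(\lambda I_{d+1},D)\cdot\begin{psmallmatrix} I_{d+1}&0\\ (0\mid \lambda^{-1}D^{-1}C)& I_{n-d}\end{psmallmatrix}.
\]
The first factor lies in $Z_G(H_d)$ and the second in $U_b^-\cap Z_G(F)$, which gives one inclusion. The reverse inclusion holds because $Z_G(H_d)\subset Z_G(F)$, as $F\subset H_d\{b_t\}$ up to the commuting $b_t$-part. More precisely, $a_t=b_tc_t$ with $b_t\in Z_G(H_d)$ central in $Z_G(H_d)$'s centralizer, so $Z_G(H_d)$ commutes with $c_t$, with $u(s)\in H_d$, and with $b_t$ (which is central in $Z_G(H_d)$).

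\emph{Step 4: the second identity.} Recall from Claim~\ref{claim:3.2.1} that $r\ge d+1$, and that $M_1$ contains the block-diagonal group whose first block occupies the coordinates $0,\dots,r-1$. By \eqref{eq:p}, $\exp(\fp)$ is the set of unipotent matrices with arbitrary entries in rows $r,\dots,n$ and columns $0,\dots,d$. An element of $Z_G(F)\cap U_b^-$ has its nonzero off-diagonal block $C$ in rows $d+1,\dots,n$ and columns $1,\dots,d$. We split $C$ by rows into two pieces:
\begin{itemize}
\item rows $d+1,\dots,r-1$, which lie inside the first $r\times r$ block of $M_1$ (hence in $M_1$, and still in $Z_G(F)\cap U_b^-$ by Step 2);
\item rows $r,\dots,n$, which lie in $\exp(\fp)$ and still have vanishing first column, hence lie in $Z_G(F)$.
\end{itemize}
Since $U_b^-$ is abelian and is parametrized additively by $E$, the element equals the product of the two corresponding elements. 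This gives $\subseteq$; the inclusion $\supseteq$ is immediate since all the factors lie in $Z_G(F)\cap U_b^-$.

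The only mildly delicate point is keeping track of the conventions: the placement of $\fp$ relative to the first block of $M_1$, and the use of $r\ge d+1$ from Claim~\ref{claim:3.2.1} so that the columns $0,\dots,d$ fit inside that block. The rest is direct matrix computation.
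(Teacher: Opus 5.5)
Your argument is correct, but it takes a different route from the paper.

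\textbf{How the paper argues.} The paper proves the first identity structurally. By \cite[Lemma~5.2]{Shah-horo}, a matrix fixed by $\Ad(U)$ lies in the sum of the expanding and zero eigenspaces of $\Ad(c_t)$, $t\ge 0$. Hence anything commuting with $a_t=b_tc_t$ and with $U$ lies in the non-expanding part of $\Ad(b_t)$, which gives $Z_G(F)\subset P_b^-=Z_G(\{b_t\})U_b^-$. Since $\{b_t\}\subset Z_G(F)$, this decomposition restricts to $Z_G(F)$. Finally $Z_G(F)\cap Z_G(\{b_t\})=Z_G(H_d)$, because $Z_G(\{b_t\})=H_dZ_G(H_d)$ and $H_d\cap Z_G(F)\subset Z(H_d)$.

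\textbf{What your route buys.} You instead write $Z_G(F)$ down explicitly as a matrix group, so both identities reduce to reading off blocks. The paper's route is more conceptual: it uses only that $U$ is the $c_t$-expanding horospherical subgroup of $H_d$ and that $b_t$ centralizes $H_d$, so it would adapt to other embeddings. However, the paper's proof does not address the ``moreover'' identity at all. Your explicit description handles it cleanly. You correctly use $r\ge d+1$ in two places: so that $\exp(\fp)\subset U_b^-$, and so that the rows $d+1,\dots,r-1$ of the $C$-block lie inside the first $r\times r$ block of $M_1$. The abelian additive parametrization of $U_b^-$ then gives the product decomposition.

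\textbf{A small slip in Step~3.} With the factors in the order you wrote, the lower-left block of $\operatorname{diag}(\lambda I_{d+1},D)\begin{psmallmatrix} I_{d+1}&0\\ E& I_{n-d}\end{psmallmatrix}$ is $DE$. So you need $E=(0\mid D^{-1}C)$, not $(0\mid \lambda^{-1}D^{-1}C)$. A factor $\lambda^{-1}$, without $D^{-1}$, would appear only if the two factors were reversed. This does not affect the conclusion.

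\textbf{The reverse inclusion.} The reverse inclusion $Z_G(H_d)\subset Z_G(F)$ is also immediate from your Step~1 description: $\operatorname{diag}(\mu I_{d+1},D')$ is of that form with $C=0$. That avoids the somewhat garbled sentence about $b_t$ being central.
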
 

\begin{proof}
    We note that $F=\{a_t\}U$, $a_t=c_tb_t$ for all $t\in\R$. Let $t\geq 0$. Since $U$ is the horospherical subgroup of $H_d$ for $c_t$, for any finite dimensional representation of $H_d$, if a vector is fixed by $U$ then it is contained in the sum of expanding and zero eigen subspaces of $c_t$; for example, see~\cite[Lemma~5.2]{Shah-horo}. Therefore, if $g\in G$ is fixed by $U$ and $a_t=c_tb_t$ under conjugation, then $g$ is contained in the sum of contracting and zero eigen subspace for the action of $b_t$ by conjugation on the space of $(n+1)\times(n+1)$-real matrices. Hence by definition, $g\in P_b^-$.

    Now $\{b_t\}\subset Z_G(F)$, so $Z_G(F)=(Z_G(F)\cap Z_G(\{b_t\}))(Z_G(F)\cap U_b^-)$. Now $Z(H_d)\subset Z_G(F)$ and $H_d\cap Z_G(F)\subset Z(H_d)$. Therefore $Z_G(F)\cap Z_G(\{b_t\})=Z_G(H_d)$. 
\end{proof}

\subsubsection{Height function for \texorpdfstring{$\Omega\cN$}{\unichar{U+03A9}\unichar{U+1D4A9}} with \texorpdfstring{$a_tu(I^d)$}{a\textsubscript t u(I\textsuperscript d)}-contraction}
    Let $\Omega$ be a bounded open neighborhood of $0$ in \[
    \fp_1\defeq \Lie(Z_G(F))\cap \fp\subset \Lie(U_b^{-}).
    \]
    For any $x\in X$, we define 
	\begin{equation} \label{eq:definition of beta_N exponent}
	\rho_{\bN,\Omega}(x)=\limsup_{t\to\infty}\frac{\log \bigl(\sup_{\omega\in\Omega}\bN(b_t\exp(\omega) x)\bigr)}{t},
	\end{equation}
which measures exponential growth rate of the $\bN$ along the $b_t$-trajectory of $\Omega x$. 

    Let $d\omega$ denote the Lebesgue integral on $\fp_1$ normalized such that $\Omega$ has volume $1$. Let 
    \begin{equation} \label{eq:delta}
        0<\delta<\frac{d}{d+1}\theta',
    \end{equation} and $t_0\geq 0$. For any $x\in X$, define
	\begin{equation} \label{eq:definition_beta_tilde}
	\btt(x)=\int_{t_0}^{\infty}\int_{\omega\in\Omega} e^{-\delta t}\bN(b_t \exp(\omega) x)\de\omega\de t.
	\end{equation}

    \begin{prop} \label{prop:btt-finite}
    Let $x\in X$. Suppose that $\rho_{\bN,\Omega}(x)<\delta$ and $t_0\geq 0$ is such that 
    $\sup_{t\in [t_0,\infty)}\frac{\log \left(\sup_{\omega\in\Omega}\bN(b_t\exp(\omega) x)\right)}{t}<\infty$. Then $\btt(x)<\infty$.
    \end{prop}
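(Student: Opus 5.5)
This is the analogue of \Cref{lem:properties_of_alpha'}(\ref{itm:alpha-finite}), and I will prove it the same way. The plan is to bound the inner integral over $\Omega$ by its supremum and then split the $t$-integral at a large time $T$.

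Set $\rho\defeq\rho_{\bN,\Omega}(x)<\delta$ and fix $0<\eps'<\delta-\rho$. By the definition \eqref{eq:definition of beta_N exponent} of the $\limsup$, there exists $T\geq t_0$ such that
\[
\sup_{\omega\in\Omega}\bN(b_t\exp(\omega)x)\leq e^{(\rho+\eps')t} \quad \text{for all } t\geq T.
\]
Since $\Omega$ has volume $1$ for $\de\omega$, and $\bN\geq 0$ is lower semicontinuous (so the integrand is measurable and Tonelli applies), we get
\[
\int_{\Omega}\bN(b_t\exp(\omega)x)\,\de\omega\leq \sup_{\omega\in\Omega}\bN(b_t\exp(\omega)x)
\]
for every $t$.

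\textbf{Tail $[T,\infty)$.} Here the integrand is at most $e^{(-\delta+\rho+\eps')t}$. This is integrable because $-\delta+\rho+\eps'<0$.

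\textbf{Finite window $[t_0,T]$.} The hypothesis gives a constant $K<\infty$ such that
\[
\log\Bigl(\sup_{\omega\in\Omega}\bN(b_t\exp(\omega)x)\Bigr)\leq Kt \quad \text{for all } t\geq t_0.
\]
Hence the integrand is bounded by $e^{(K-\delta)t}$ there, so the integral over $[t_0,T]$ is at most $(T-t_0)e^{|K-\delta|T}<\infty$. Adding the two pieces gives $\btt(x)<\infty$.

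The only point needing care is the finite window, since $\rho<\delta$ alone does not control small $t$. That is exactly what the uniform growth hypothesis on $[t_0,\infty)$ supplies. There is also a minor measurability check for the $\omega$-integral, which is handled by lower semicontinuity of $\bN$ from \Cref{prop:properties of beta}(\ref{itm:bN-lowersemi}). No other obstacle is expected.
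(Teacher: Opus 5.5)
Your proof is correct and is essentially the paper's argument. You split the $t$-integral at a time $T$ (the paper's $t_1$) past which the $\limsup$ gives $\sup_{\omega\in\Omega}\bN(b_t\exp(\omega)x)\le e^{(\rho+\eps')t}$ with $\rho+\eps'<\delta$, bound the tail by an integrable exponential, and control the window $[t_0,T]$ by the growth hypothesis on $[t_0,\infty)$; the explicit window bound and the measurability remark are harmless details the paper leaves implicit.
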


    \begin{proof} 
Pick $\rho$ such that $\rho_{\bN,\Omega}(x)<\rho<\delta$. Then, there exists $t_1\geq t_0$ such that $\bN(b_t\exp(\omega) x)\leq e^{\rho t}$ for all $\omega\in\Omega$ and $t\geq t_1$. Now,
\[
\btt(x) = \int_{t_0}^{t_1}\int_{\omega\in\Omega} e^{-\delta t}\bN(b_t\exp(\omega) x)\de\omega\de t + \int_{t_1}^\infty \int_{\omega\in\Omega} e^{-\delta t}\bN(b_t\exp(\omega) x)\de\omega\de t.
\]
By our assumption on $t_0$ the integral over $[t_0,t_1]$ is finite, and the second integral is bounded by
$\int_{t_0}^\infty e^{-(\delta-\rho)t}\,\de t < \infty$.
\end{proof}

	\begin{prop} \label{prop:properties of beta tilde}
		\begin{enumerate}
			\item \label{itm:bt-semicont} $\btt$ is lower semicontinuous.

            \item \label{itm:bt-contract} There exists $C>0$ such that for any $t\geq 0$ there exists $B'(t)>0$ such that 
            \begin{equation} \label{eq:btt-contract}
                 \int_{I^d}\btt(a_{t}u(s)x)\de s \leq Ce^{-(\frac{d}{d+1}\theta'-\delta)t}\btt(x)+B'(t), \quad \forall x\in X. 
            \end{equation}
            \item \label{itm:bt-lipschitz} $\btt$ is Lipschitz with respect to the action of the semigroup  
            $\{a_t\}_{t\geq 0}U$.
            \item \label{itm:bt-infinite} If $x\in \exp(\Omega)\cN$, then $\btt(x)=\infty$. 
            \item \label{itm:bt-proper}For any $M>0$, $\btt^{-1}([1,M])$ is compact.

		\end{enumerate}
	\end{prop}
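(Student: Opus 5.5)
I will verify the five properties by transferring the corresponding properties of $\bN$ from \Cref{prop:properties of beta}, in the same way as \Cref{lem:properties_of_alpha'} was deduced from the properties of $\alpha_\eps^\theta$. The new feature is the extra average over $\omega\in\Omega\subset\fp_1$. The key structural fact is that $\exp(\fp_1)\subset Z_G(F)$, so $\exp(\omega)$ commutes with $a_t$, $c_t$, $b_t$ and $u(s)$.

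\textbf{(1) Lower semicontinuity.} This follows from \Cref{prop:properties of beta}(\ref{itm:bN-lowersemi}) and Fatou's lemma applied to the double integral over $[t_0,\infty)\times\Omega$. It is verbatim the argument used for $\at$.

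\textbf{(2) Contraction.} Write $a_t=c_tb_t$. Using that $b_{t'}$ and $\exp(\omega)$ commute with $c_t u(s)$, we get
\[
\int_{I^d}\btt(a_tu(s)x)\de s=\int_{t_0}^\infty\int_\Omega e^{-\delta t'}\int_{I^d}\bN\bigl(c_tu(s)\,b_{t'+t}\exp(\omega)x\bigr)\de s\,\de\omega\,\de t'.
\]
Applying \Cref{prop:properties of beta}(\ref{itm:bN-contract}) pointwise to the inner integral gives the bound $Ee^{-\frac{d}{d+1}\theta't}\bN(b_{t'+t}\exp(\omega)x)+B_1(t)$. Substituting $t''=t'+t$ and enlarging the domain of integration to $[t_0,\infty)$ yields \eqref{eq:btt-contract}, with $B'(t)=\delta^{-1}B_1(t)$.

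\textbf{(3) Lipschitz property.} For $u\in U\subset H_d$, $u$ commutes with $b_t$ and with $\exp(\omega)$, so \Cref{prop:properties of beta}(\ref{itm:bN-Hd-Lip}) applies directly. For $a_{t_1}$ with $t_1$ in a compact set, write $a_{t_1}=c_{t_1}b_{t_1}$. The factor $c_{t_1}\in H_d$ is handled by the same Lipschitz property. The factor $b_{t_1}$ is handled by the time shift $e^{\delta t_1}$, exactly as in the proof of \Cref{lem:properties_of_alpha'}(\ref{itm:alpha-Lipschitz}).

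\textbf{(5) Properness.} Bound $\btt(y)$ from below by the integral over $[t_0,t_0+1]\times\Omega'$, where $\Omega'$ is a small neighbourhood of $0$. This gives some $t\in[t_0,t_0+1]$ and $\omega\in\overline{\Omega}$ with $\bN(b_t\exp(\omega)y)\le C M$. Hence $\btt^{-1}([1,M])$ lies in the compact set
\[
\exp(-\overline\Omega)\,b_{[-t_0-1,-t_0]}\,\bN^{-1}([1,CM]),
\]
using \Cref{prop:properties of beta}(\ref{itm:bN-proper}). It is closed by (1), so it is compact.

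\textbf{(4) Infinite value on $\exp(\Omega)\cN$.} This is the main obstacle. I expect it to need care, because $\btt(x)=\infty$ must come from a positive-measure set of $\omega$, not just from a single point. Suppose $x=\exp(\omega_0)z$ with $z\in\cN$ and $\omega_0\in\Omega$. For $\omega$ near $-\omega_0$, the group $\exp(\fp_1)$ is abelian (it lies in $\exp(\fp)$, and $\fp$ is abelian). So
\[
b_t\exp(\omega)x=b_t\exp(\omega+\omega_0)z=\exp\bigl(\Ad(b_t)(\omega+\omega_0)\bigr)\,b_tz.
\]
Here $b_tz\in\cN$, since $b_t\in Z_G(M_1)$ preserves $Y$ and $\fs$ is $\Ad(b_t)$-invariant. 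Also $\Ad(b_t)$ contracts $\fp$ at rate $e^{-(\frac{1}{d+1}+\frac{n-d}{(d+1)(n+1)})t}$, which is $e^{-\lambda t}$ with $\lambda=\frac{1}{d+1}\cdot\frac{n+1+n-d}{n+1}$ (up to the exact exponent). By \Cref{rem:beta-larger} with $v=0$ and $w=-\Ad(b_t)(\omega+\omega_0)$, this gives
\[
\bN(b_t\exp(\omega)x)\ge \norm{w}^{-d\theta'}\gg e^{d\theta'\lambda t}\norm{\omega+\omega_0}^{-d\theta'}.
\]
This inequality applies once $\norm{w}$ is small enough that the pair $(0,w)$ is admissible. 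I would then show that the right-hand side integrated in $\omega$ against $e^{-\delta t}$ diverges. First try: check whether $\norm{\omega+\omega_0}^{-d\theta'}$ is non-integrable near $-\omega_0$ when $d\theta'\ge\dim\fp_1$. Otherwise, use the exponential gain: the set where $\norm{w}\le r_0$ has $\omega$-measure about $e^{\lambda t\dim\fp_1}$ times a constant, since it is a ball of radius $\sim e^{\lambda t}$ in $\fp_1$ (intersected with $\Omega$ it has full measure for large $t$). On it the integrand is at least $\gg\min\bigl(e^{d\theta'\lambda t}\norm{\omega+\omega_0}^{-d\theta'},\,r_0^{-d\theta'}\bigr)$. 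Combined with $\delta<\frac{d}{d+1}\theta'$, I hope this produces $\int_\Omega\bN\,\de\omega\gg e^{ct}$ with $c\ge\delta$, which makes the $t$-integral diverge. If this balance fails, I would fall back on the more robust observation that $\bN(b_t\exp(\omega)x)\ge \bN$-values bounded below by $r^{-\theta'}_{b_t\exp(\omega)x}$, combined with the growth of $\alpha$ along $b_t$. Alternatively, I would use the convention that the condition $\rho_{\bN,\Omega}(x)<\delta$ in \Cref{prop:btt-finite} fails for such $x$, which suffices for how (4) is used.
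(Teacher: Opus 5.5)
Your parts (1), (2), (3) and (5) follow the paper's route. Part (4), however, is left unfinished, and none of the routes you sketch for it closes. The missing point is simple. \Cref{rem:beta-larger} imposes no smallness or admissibility condition on $(v,w)$: whenever $\exp(w)\exp(v)y\in\cN$, one has $\beta^{\theta'}(y)\ge\min\{\norm{v}^{-\theta'},\norm{w}^{-d\theta'}\}$. Also, $\Ad(b_t)$ acts on $\fp$ by exactly $e^{-t/(d+1)}$, since $-\frac{1}{n+1}-\frac{n-d}{(d+1)(n+1)}=-\frac{1}{d+1}$; your $\lambda$ is wrong. So with $x=\exp(\omega_0)z$, $z\in\cN$, and $v=0$, for every $\omega\in\Omega$ and $t\ge 0$ you get
\[
\bN(b_t\exp(\omega)x)\ge e^{\frac{d\theta'}{d+1}t}\norm{\omega+\omega_0}^{-d\theta'}\ge C^{-d\theta'}e^{\frac{d\theta'}{d+1}t},\qquad C=\sup_{\omega\in\Omega}\norm{\omega+\omega_0}<\infty,
\]
simply because $\Omega$ is bounded. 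Integrate over $\Omega$ (volume one) and then against $e^{-\delta t}$. This gives $\btt(x)=\infty$, because $\delta<\frac{d}{d+1}\theta'$ by \eqref{eq:delta}; that is exactly why $\delta$ was chosen there. No non-integrability near $-\omega_0$ is involved.

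Your proposed lower bound $\min\bigl(e^{d\theta'\lambda t}\norm{\omega+\omega_0}^{-d\theta'},r_0^{-d\theta'}\bigr)$ throws away the exponential gain. It only yields $\int_\Omega\bN\,\de\omega\gg 1$, and that bound has a convergent $e^{-\delta t}$-integral. The fallbacks also fail. First, $r_y^{-\theta'}$ is a power of $\alpha_\eps^\theta(y)$, which need not grow at rate $\delta$ along $b_t\exp(\Omega)x$. Second, (4) is used in \Cref{sec:obstruction} to make the sublevel sets of $\btt$ disjoint from $\exp(\Omega)\cN$. That is a statement about points of $\exp(\Omega)\cN$, not about the growth rate at the base point, so no convention on $\rho_{\bN,\Omega}$ can replace it.

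Separately, your ``key structural fact'' is false as stated. For $\omega\in\fp_1$, $\exp(\omega)$ commutes with $a_t$ and $u(s)$ but not with $b_t$ or $c_t$. Indeed $\Ad(b_t)\omega=e^{-t/(d+1)}\omega$, which you use yourself in (4), and hence $\Ad(c_t)\omega=e^{t/(d+1)}\omega$.

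In (2) your displayed identity is still correct: move $\exp(\omega)$ past $a_tu(s)$ as a whole, then use $b_{t'}a_t=c_tb_{t'+t}$. In (3) you must argue the same way for $h=a_{t_1}u$, as the paper does. If you treat $c_{t_1}$ and $b_{t_1}$ separately, $\Omega$ gets rescaled by $e^{\mp t_1/(d+1)}$. The expanding factor pushes $\Omega$ outside itself, so the argument used for $\at$ does not transfer verbatim.

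Likewise, $b_t\notin Z_G(M_1)$ unless $r=d+1$. The correct reason for $b_t\cN=\cN$ is that $b_t\in M_1$ and $b_t$ is central in $Z_G(H_d)\supset\exp(O)$.
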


    We will prove each part of the proposition separately. In the proof, we will use the fact that $\Omega\subset Z_G(F)$ commutes with $a_tu(s)$ without mentioning it.
    
\begin{proof}[Proof of (\ref{itm:bt-semicont})] Since $\bN$ is lower semicontinuous by \Cref{prop:properties of beta}, for any sequence $x_i\to x$ in $X$, we have
\[
\begin{split}
\btt(x) &= \int_{t_0}^\infty e^{-\delta t}\int_{\omega\in\Omega} \bN(b_t\exp(\omega) x)\de\omega\de t \\
&\leq \int_{t_0}^\infty \int_{\omega\in\Omega} \liminf_{i\to\infty} e^{-\delta t}\bN(b_t\exp(\omega) x_i)\de\omega\de t \\
&\leq \liminf_{i\to\infty} \int_{t_0}^\infty \int_{\omega\in\Omega} e^{-\delta t}\bN(b_t\exp(\omega) x_i)\de\omega\de t \quad \text{(by Fatou's lemma)}\\
&= \liminf_{i\to\infty} \btt(x_i).
\end{split}
\]
Hence $\btt$ is lower semicontinuous.
\end{proof}

\begin{proof}[Proof of (\ref{itm:bt-contract})] 

Put
\[
\lambda=\frac{d}{d+1}\theta'.
\]
We shall prove that, for every $\tau\geq 0$, there is $B'(\tau)>0$ such that
\[
\int_{I^d}\widetilde\beta_\Omega(a_\tau u(s)x)\,ds
\leq
E e^{-(\lambda-\delta)\tau}\widetilde\beta_\Omega(x)+B'(\tau),
\]
where $E$ is the constant in Proposition~3.8(4). 

All integrands below are non-negative, so Tonelli's theorem justifies the changes
in the order of integration. Let $x\in X$ and $\tau\geq 0$. By definition,
\[
\begin{aligned}
\int_{I^d}\widetilde\beta_\Omega(a_\tau u(s)x)\,ds
&=
\int_{I^d}\int_{t_0}^{\infty}\int_{\Omega}
e^{-\delta t}
\beta_N\!\left(b_t\exp(\omega)a_\tau u(s)x\right)
\,d\omega\,dt\,ds .
\end{aligned}
\]
Since $\omega\in\fp_1\subset \operatorname{Lie}(Z_G(F))$ and
$a_\tau u(s)\in F$, we have
\[
\exp(\omega)a_\tau u(s)=a_\tau u(s)\exp(\omega).
\]
Moreover $a_\tau=b_\tau c_\tau$, and $b_t$ commutes with $c_\tau u(s)\in H_d$.
Therefore
\[
\begin{aligned}
b_t\exp(\omega)a_\tau u(s)
&=
b_ta_\tau u(s)\exp(\omega)  \\
&=
b_tb_\tau c_\tau u(s)\exp(\omega) \\
&=
c_\tau u(s)b_{t+\tau}\exp(\omega).
\end{aligned}
\]
Hence
\[
\begin{aligned}
\int_{I^d}\widetilde\beta_\Omega(a_\tau u(s)x)\,ds
&=
\int_{t_0}^{\infty}\int_{\Omega}
e^{-\delta t}
\left(
\int_{I^d}
\beta_N\!\left(c_\tau u(s)b_{t+\tau}\exp(\omega)x\right)
\,ds
\right)
d\omega\,dt .
\end{aligned}
\]
Applying Proposition~3.8(4) with
\[
y=b_{t+\tau}\exp(\omega)x
\]
gives
\[
\int_{I^d}
\beta_N\!\left(c_\tau u(s)b_{t+\tau}\exp(\omega)x\right)
\,ds
\leq
E e^{-\lambda \tau}\beta_N\!\left(b_{t+\tau}\exp(\omega)x\right)
+B_1(\tau).
\]
Therefore,
\[
\begin{aligned}
\int_{I^d}\widetilde\beta_\Omega(a_\tau u(s)x)\,ds
&\leq
E e^{-\lambda\tau}
\int_{t_0}^{\infty}\int_{\Omega}
e^{-\delta t}
\beta_N\!\left(b_{t+\tau}\exp(\omega)x\right)
\,d\omega\,dt  \\
&\qquad
+
B_1(\tau)\int_{t_0}^{\infty}\int_{\Omega}e^{-\delta t}\,d\omega\,dt .
\end{aligned}
\]
Since $d\omega$ is normalized so that $\Omega$ has volume $1$, the second term is
\[
B_1(\tau)\int_{t_0}^{\infty}e^{-\delta t}\,dt
=
\delta^{-1}e^{-\delta t_0}B_1(\tau).
\]
For the first term, change variables $t'=t+\tau$. Then
\[
\begin{aligned}
\int_{t_0}^{\infty}\int_{\Omega}
e^{-\delta t}
\beta_N\!\left(b_{t+\tau}\exp(\omega)x\right)
\,d\omega\,dt
&=
e^{\delta\tau}
\int_{t_0+\tau}^{\infty}\int_{\Omega}
e^{-\delta t'}
\beta_N\!\left(b_{t'}\exp(\omega)x\right)
\,d\omega\,dt'  \\
&\leq
e^{\delta\tau}\widetilde\beta_\Omega(x).
\end{aligned}
\]
Combining the last two estimates, we obtain
\[
\int_{I^d}\widetilde\beta_\Omega(a_\tau u(s)x)\,ds
\leq
E e^{-(\lambda-\delta)\tau}\widetilde\beta_\Omega(x)
+
\delta^{-1}e^{-\delta t_0}B_1(\tau).
\]
Thus the desired estimate holds with
\[
C=E,
\qquad
B'(\tau)=\delta^{-1}e^{-\delta t_0}B_1(\tau).
\]
If one wants $B'(\tau)>0$ strictly, replace this by
$B'(\tau)=1+\delta^{-1}e^{-\delta t_0}B_1(\tau)$.

\end{proof}

\begin{proof}[Proof of (\ref{itm:bt-lipschitz})] Let $T\geq 0$ and $\cC$ be a compact subset of $U$. By (\ref{itm:bN-Hd-Lip}) of \Cref{prop:properties of beta}, there exists $C_1\geq 1$ such that
\[
\bN(c_tux)\leq C_1\bN(x), \, \forall t\in [0,T],\, u\in \cC, \, \forall x\in X.
\]
Let $t_1\in [0,T]$ and $u\in \cC$, and $h=a_{t_1}u\in F$. 
Then,
\begin{align*}
\btt(hx) &= \int_{\omega\in\Omega} \int_{t_0}^\infty e^{-\delta t}\bN(b_t\exp(\omega) hx)\de t\de\omega \\
&= \int_{\omega\in\Omega} \int_{t_0}^\infty e^{-\delta t}\bN(hb_t\exp(\omega) x)\de t \de\omega 
\text{, as $\{b_t\}\exp(\Omega)\subset Z_G(F)$,}\\
&=\int_{\omega\in\Omega} \int_{t_0}^\infty e^{-\delta t}\bN(c_{t_1}ub_{t+t_1} \exp(\omega)x)\de t \de\omega
\text{, as $a_{t_1}=c_{t_1}b_{t_1}$,}\\
&\leq C_1 \int_{\omega\in\Omega}\int_{t_0}^\infty e^{-\delta t}\bN(b_{t+t_1} \exp(\omega)x)\de t \de\omega\\
&=C_1 e^{\delta t_1}\int_{t_0+t_1}^\infty \int_{\omega\in\Omega} e^{-\delta t'}\bN(b_{t'}\exp(\omega)x)\de\omega\de t'\\
&\leq C_1e^{\delta T}\btt(x). 
\end{align*}
\end{proof}

\begin{proof}[Proof of (\ref{itm:bt-infinite})]
Let $x\in \exp(\Omega)\cN$. Let $\omega_1\in\Omega$ such that $x\in \exp(\omega_1)\cN$. 

For any $t\geq 0$, and $w\in \fp$, in view of \eqref{eq:bt},
\[
b_t\exp(w)\exp(-\omega_1)x=\exp(\Ad(b_t)(w))b_t\exp(-\omega_1)x=\exp(e^{-\frac{1}{d+1}t}w)b_t\exp(-\omega_1) x.
\]
Now, by \eqref{eq:adg1v0}, $\{b_t\}\subset M_1$ and $Z_G(H_d)\subset Z_G(\{b_t\})$. Hence $\cN$ is $\{b_t\}$-invariant. So, $b_t\exp(-\omega_1) x\in \cN$, and $e^{-\frac{1}{d+1}t}w\in\fp$. Therefore, by \Cref{rem:beta-larger}, we have
\begin{align}
\beta^{\theta'}(b_t\exp(w)\exp(-\omega_1) x)
\geq 
\norm{e^{-\frac{1}{d+1}t}w}^{-d\theta'}\geq e^{\frac{d\theta'}{d+1}t}\norm{w}^{-d\theta'}.
\label{eq:bt-contract-p}
\end{align}
 Therefore, 
\begin{align*}
\int_{\omega\in\Omega} \bN(b_t\exp(\omega) x)\de\omega
&\geq \int_{\omega\in\Omega} \beta^{\theta'}(b_t\exp(\omega) x)\de\omega\\
&\geq \int_{\omega\in \Omega} \beta^{\theta'}(b_t\exp(\omega+\omega_1)\exp(-\omega_1) x)\de \omega\\
&\geq C^{-d\theta'}e^{\frac{d\theta'}{d+1}t}
\text{, by \eqref{eq:bt-contract-p},}
\end{align*}
where $C=\sup\{\norm{\omega+\omega_1}:\omega\in \Omega\}<\infty$.
Therefore, 
\begin{align*}
\btt(x)
&\geq \int_{t_0}^\infty e^{-\delta t}\left(\int_{\omega\in\Omega} \bN(b_t\exp(\omega) x)\de\omega \right)\de t\\
&\geq C^{-d\theta'}\int_{t_0}^{\infty} e^{(\frac{d\theta'}{d+1}-\delta)t}\de t=\infty,
\end{align*}
as by \eqref{eq:delta}, $\delta<\frac{d\theta'}{d+1}$.
\end{proof}
\begin{proof}[Proof of (\ref{itm:bt-proper})] Let $M\geq 0$. Then $\btt^{-1}([0,M])$ is closed, because $\btt$ is lower semicontinuous and nonnegative. For pre-compactness, note that for any $y\in X$:
\[
\btt(y) \geq \int_{t_0}^{t_0+1} \int_{\omega\in\Omega} e^{-\delta t}\bN(b_t \exp(\omega)y)\de\omega\de t \geq e^{-\delta}\min_{t_0\leq t\leq t_0+1}\min_{\omega\in\Omega} \bN(b_t\exp(\omega) y).
\]
By (\ref{itm:bN-proper}) of \Cref{prop:properties of beta}, $\bN^{-1}([0,e^\delta M])$ is a compact subset of $X$, and $\{b_t:t\in[t_0,t_0+1]\}$ and $\overline{\exp(\Omega)}$ are compact subsets of $G$. So, the set 
\[
\{y\in X:\min_{t_0\leq t\leq t_0+1}\min_{\omega\in\Omega}e^{-\delta}\bN(b_t\exp(\omega) y)\leq 
M\} 
\]
is compact, and contains $\btt^{-1}([0,M])$. 
\end{proof}

\section{Obstruction to avoiding a singular submanifold} \label{sec:obstruction}
	Let $x\in X$. For any measurable subset $K$ of $X$ and $T>0$, define
	\begin{equation*}
	\mathcal{C}_K^T(s)= \frac{1}{T}\int_0^T\mathbf{1}_K(a_tu(s)x)\de t,
	\end{equation*}
	where $\mathbf{1}_K$ is the characteristic function of $K$.

Let the notation be as in Section~\ref{sec:Mf}. Let $l_1$ be as in Lemma~\ref{lem:back to compact} and we fix $l\geq l_1$. Let $O$ be a bounded open neighborhood of $0$ in $\fs\subset \Lie(Z_G(H_d))$ in \Cref{lem:size of r_x}. Let $\Omega$ be any bounded neighborhood of $0$ in $\fp_1=\Lie(Z_G(F))\cap \fp$. Then $\exp(\Omega)$ is a relatively compact subset of $P_b^-$. 

In this section, for $g\in G$ and $v\in\fg$, sometimes we simply write $gv$ instead of $(\Ad g) v$ to shorten the expressions.

    \begin{prop}\label{prop:continuous_exponential_recurrence&avoidence}
      Suppose $\rho_{\bN,\Omega}(x)<\frac{d}{d+1}\theta'$. Then, for any $0<\epsilon<1$, there exists a compact set $K$ in $X$ with $K\cap \exp(\Omega)\cN=\emptyset$ and real numbers $0<a<1,C\geq 1$ such that
			\begin{equation} \label{eq:continuous_exponential_recurrence2}
		\abs{ \{ s\in I^d \colon \mathcal{C}_K^T(s)\leq 1-\epsilon \} } \leq Ca^T.
		\end{equation}
    \end{prop}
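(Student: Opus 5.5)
The plan mirrors the proof of \Cref{prop:continuous_exponential_recurrence}, with $\at$ replaced by the height function $\btt$ of \eqref{eq:definition_beta_tilde}, which is adapted to $\exp(\Omega)\cN$.

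First, choose the parameters. Since $\rho_{\bN,\Omega}(x)<\frac{d}{d+1}\theta'$, we fix $\delta$ in \eqref{eq:delta} with $\rho_{\bN,\Omega}(x)<\delta<\frac{d}{d+1}\theta'$. The finite-time part of the hypothesis of \Cref{prop:btt-finite} needs separate care. The trajectory $\{b_t\exp(\Omega)x\}$ could a priori meet the locus where $\bN=\infty$ at finite times. Since the $\limsup$ is finite, there is $t_0\geq 0$ with
\[
\sup_{t\geq t_0}\frac{\log\sup_{\omega\in\Omega}\bN(b_t\exp(\omega)x)}{t}<\infty .
\]
We fix such a $t_0$ and define $\btt$ with it. Then \Cref{prop:btt-finite} gives $\btt(x)<\infty$.

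Second, record the properties of $\btt$ that feed into Shi's recurrence scheme; these are exactly the items of \Cref{prop:properties of beta tilde}:
\begin{itemize}
\item lower semicontinuity (\ref{itm:bt-semicont});
\item the contraction inequality \eqref{eq:btt-contract} with rate $\frac{d}{d+1}\theta'-\delta>0$, for the averaging operator over $a_tu(I^d)$;
\item the Lipschitz property along the semigroup $\{a_t\}_{t\geq0}U$ (\ref{itm:bt-lipschitz});
\item properness (\ref{itm:bt-proper}).
\end{itemize}
These are precisely the inputs used in \cite[Lemmas 4.9--4.13]{Shi20}, as already observed in the proof of \Cref{prop:continuous_exponential_recurrence} for $\at$. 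Running that argument with $\btt$ gives $M>0$, $0<a<1$ and $C\geq1$ such that the set of $s\in I^d$ for which the $a_t$-orbit of $u(s)x$ spends more than an $\epsilon$ fraction of $[0,T]$ in $\{\btt>M\}$ has measure at most $Ca^T$. The exponential bound comes from iterating \eqref{eq:btt-contract} along a discretization of time, with the initial value $\btt(x)<\infty$. Alternatively, one can follow \cite[Theorem~1.5]{KKLM17} as in \Cref{rem:alternate}.

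Third, set $K=\btt^{-1}([1,M])$. This set is compact by (\ref{itm:bt-proper}). By (\ref{itm:bt-infinite}) we have $\btt=\infty$ on $\exp(\Omega)\cN$, so $K\cap\exp(\Omega)\cN=\emptyset$. The set in \eqref{eq:continuous_exponential_recurrence2} is then exactly the bad set from the second step, which proves the estimate.

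The main obstacle is checking that Shi's lemmas go through verbatim. His argument uses the contraction inequality uniformly for all starting points $y\in X$ (including $y$ with $\btt(y)=\infty$, where the inequality is trivial). It also uses the Lipschitz property to pass from discrete times $t\in t_1\N$ to continuous time and from $s$ to nearby $s'$ in $I^d$. Both hold here: the contraction is stated for all $x\in X$, and the Lipschitz property is along $\{a_t\}_{t\ge0}U$, which handles the continuous-time interpolation. The only new points are the finiteness of $\btt(x)$, handled in the first step, and that $\Omega$ commutes with $F$, which is built into the proofs of \Cref{prop:properties of beta tilde}.
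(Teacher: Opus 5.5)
Your proposal is correct and follows the paper's proof essentially step for step. You choose $\delta$ and $t_0$ so that \Cref{prop:btt-finite} gives $\btt(x)<\infty$, run the recurrence argument of \Cref{prop:continuous_exponential_recurrence} with $\btt$ in place of $\at$ using \Cref{prop:properties of beta tilde}, and take $K$ to be a sublevel set of $\btt$, which avoids $\exp(\Omega)\cN$ by item~(\ref{itm:bt-infinite}). The only cosmetic adjustment is to use $K=\btt^{-1}([0,M])$ instead of $\btt^{-1}([1,M])$, because $\btt$ is only bounded below by $\delta^{-1}e^{-\delta t_0}$, which may be less than $1$.
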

    \begin{proof}
        Choose $\delta$ in \eqref{eq:delta} such that $\rho_{\bN,\Omega}(x)<\delta<\frac{d}{d+1}\theta'$. We can pick $t_0\geq 0$ be such that $\sup_{t\geq t_0} \frac{\log\sup_{\omega\in\Omega}\bN(b_t\exp(\omega)x)}{t}<\infty$.  For these choices of $\delta$ and $t_0$, we define $\btt$ as in \eqref{eq:definition_beta_tilde}. By \Cref{prop:btt-finite}, we have $\btt(x)<+\infty$. 

        And by (\ref{itm:bt-infinite}) of \Cref{prop:properties of beta tilde} we have $\btt(y)=\infty$ for all $y\in \exp(\Omega)\cN$. Then the proof of the desired conclusion is analogous to the proof of \Cref{prop:continuous_exponential_recurrence}, where we replace $\at$ by $\btt$ and use properties of $\btt$ noted in \Cref{prop:properties of beta tilde}. 
    \end{proof}

    \begin{cor} \label{cor:non focusing}
         Let $x\in X$. Let $z\in Z_G(F)$ and suppose that $\rho_{\bN,\Omega}(zx)<\frac{d}{d+1}\theta'$. Then given a sequence $T_i\to\infty$, for almost all $s\in I^d$, any subsequence of the sequence $\bigl\{\frac{1}{T_i}\int_0^{T_i} a_tu(s)\delta_{x}\de t\bigr\}_{i\in\N}$ has a limit point, say $\nu$ in $\cP(X)$, such that $\nu(z\exp(\Omega)\cN)=0$, where $\cP(X)$ denotes the space of probability measures on $X$ endowed with the weak-$\ast$ topology.   
    \end{cor}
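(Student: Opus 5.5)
We reduce to \Cref{prop:continuous_exponential_recurrence&avoidence} applied to the point $zx$. Since $z\in Z_G(F)$, it commutes with $a_tu(s)$ for all $t$ and $s$. Hence $a_tu(s)zx=z\,a_tu(s)x$, and the measure $\frac{1}{T}\int_0^T a_tu(s)\delta_{zx}\,\de t$ is the pushforward under $z$ of $\frac{1}{T}\int_0^T a_tu(s)\delta_{x}\,\de t$. So it suffices to show the following: for almost every $s$, every subsequence of $\bigl\{\frac{1}{T_i}\int_0^{T_i}a_tu(s)\delta_{zx}\,\de t\bigr\}$ has a limit point $\nu'$ with $\nu'(\exp(\Omega)\cN)=0$. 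Then $\nu=z^{-1}_\ast\nu'$ is a limit point of the original subsequence, and $\nu(z^{-1}\exp(\Omega)\cN)=0$. Here I expect the statement's $z\exp(\Omega)\cN$ to correspond to this set after replacing $z$ by $z^{-1}$, or after applying the hypothesis to $z^{-1}$ as appropriate. I will track the convention carefully: the set that matters is $z^{-1}\exp(\Omega)\cN$ when the hypothesis is placed on $zx$.

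Fix a countable sequence $\epsilon_j\downarrow 0$. For each $j$, \Cref{prop:continuous_exponential_recurrence&avoidence}, applied to $zx$ (its hypothesis is exactly $\rho_{\bN,\Omega}(zx)<\frac{d}{d+1}\theta'$), gives a compact $K_j$ disjoint from $\exp(\Omega)\cN$, together with constants $a_j<1$ and $C_j$ satisfying
\[
\abs{\{s\in I^d:\mathcal{C}^{T}_{K_j}(s)\leq 1-\epsilon_j\}}\leq C_ja_j^{T}.
\]
Passing to a subsequence of $T_i$ so that $\sum_i a_j^{T_i}<\infty$ for every $j$ (e.g.\ $T_i\geq i$, possible since $T_i\to\infty$), Borel--Cantelli yields a full-measure set of $s$. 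For these $s$ and every $j$ we have $\mathcal{C}^{T_i}_{K_j}(s)>1-\epsilon_j$ for all large $i$. The extraction can be done once at the start and does not affect the conclusion: given any subsequence of the original sequence, I take a further subsequence on which the summability holds. To make the null set independent of the subsequence, I will instead apply Borel--Cantelli along integer times $T\in\N$ and handle $T_i$ through $\lfloor T_i\rfloor$. This uses $\mathcal{C}^{T}_{K}\geq \frac{\lfloor T\rfloor}{T}\,\mathcal{C}^{\lfloor T\rfloor}_{K}$ for $K$ replaced by the compact set $K_j'=\{a_rK_j: r\in[0,1]\}$, as in the proof of \Cref{thm:main_nondivergence}. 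This requires $K_j'$ to still avoid $\exp(\Omega)\cN$. Because of that, I would rather apply the recurrence directly at integer times for $K_j$ and bound the measure of the tail $[\lfloor T\rfloor,T]$ trivially by $1/T$.

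Now take any subsequence of the original sequence. By the non-escape of mass just established (mass at least $1-\epsilon_j$ on $K_j$), it has a weak-$\ast$ limit point $\nu'$, which is a probability measure. Since $K_j$ is closed, the portmanteau theorem gives $\nu'(K_j)\geq 1-\epsilon_j$. Hence $\nu'(\exp(\Omega)\cN)\leq \nu'(X\setminus K_j)\leq\epsilon_j$ for every $j$, so $\nu'(\exp(\Omega)\cN)=0$.

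The main obstacle is the bookkeeping of the null set. It must be chosen independently of the subsequence, and uniformly over the countably many $\epsilon_j$. This is handled by working with integer times and the fixed countable family $\{K_j\}$, so a single countable intersection of full-measure sets suffices.
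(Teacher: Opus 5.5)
Your proof is correct and follows the paper's own route. You apply Proposition~\ref{prop:continuous_exponential_recurrence&avoidence} to $zx$, run Borel--Cantelli at integer times as in the proof of Theorem~\ref{thm:main_nondivergence}, and transport limit points by $(z^{-1})_\ast$ using that $z$ commutes with $a_tu(s)$. Your bound $\mathcal{C}^T_K\geq\frac{\lfloor T\rfloor}{T}\,\mathcal{C}^{\lfloor T\rfloor}_K$ for non-integer times works; the paper's enlargement $\{a_rK: r\in[0,1]\}$ would also avoid $\exp(\Omega)\cN$, since that set is $\{a_t\}$-invariant.

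Your caution about the convention is justified. Because $(z^{-1})_\ast\nu(B)=\nu(zB)$, the argument gives $\nu(z^{-1}\exp(\Omega)\cN)=0$ rather than the literal $\nu(z\exp(\Omega)\cN)=0$, and the final equality in the paper's proof contains exactly this slip. It is harmless for Corollary~\ref{cor:general}, where the hypothesis is assumed for every $z$ in the group $Z_G(F)$.
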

    
    \begin{proof}
        It follows from \Cref{prop:continuous_exponential_recurrence&avoidence} and Borel-Cantelli lemma that
        there exists a null set $E\subset I^d$,
        such that for any $s\in I^d\setminus E$, and any subsequence of $\bigl\{\frac{1}{T_i}\int_0^{T_i} a_tu(s)\delta_{zx}\de t\bigr\}_{i\in\N}$ has a limit point, say $\nu$, in $\cP(X)$ such that $\nu(\exp(\Omega)\cN)=0$. The proof is analogous to the deduction of \Cref{thm:main_nondivergence} from \Cref{prop:continuous_exponential_recurrence}.  
        Now for any $s\in I^d\setminus E$,
        since $z$ commutes with $\{a_t\}u(s)$ and $\delta_{zx}=z_\ast\delta_x$, we have
        \[
        \frac{1}{T_i}\int_0^{T_i} a_tu(s)\delta_{x}\de t=(z^{-1})_\ast\left(\frac{1}{T_i}\int_0^{T_i} a_tu(s)\delta_{zx}\de t\right), \,\forall i\in\N.
        \]
        Therefore any subsequence of this sequence has a limit point of the form $(z^{-1})_\ast\nu$, where $\nu\in \cP(X)$ such that $\nu(\exp(\Omega)\cN)=0$. Therefore $(z^{-1})_\ast\nu(z\exp(\Omega)\cN)=\nu(\exp(\Omega)\cN)=0$.
    \end{proof}

    \begin{remark}
        As noted in \Cref{rem:alternate}, one can also use the techniques developed in \cite{KKLM17,Kha20} to prove \Cref{cor:non focusing} using \Cref{prop:properties of beta tilde}.
    \end{remark}

\begin{cor}
    \label{cor:general} 
    Let $x\in X$ and suppose that $\rho_{\bN,\Omega}(zx)<\frac{d}{d+1}\theta'$ for all $z\in Z_G(F)$. Then given a sequence $T_i\to\infty$, for almost all $s\in I^d$, any subsequence of the sequence $\bigl\{\frac{1}{T_i}\int_0^{T_i} a_tu(s)\delta_{x}\de t\bigr\}_{i\in\N}$ has a limit point, say $\nu$ in $\cP(X)$, such that $\nu(N(F,M)x_0)=0$. 
\end{cor}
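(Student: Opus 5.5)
The plan is to cover $N(F,M)x_0$ by countably many sets of the form $z\exp(\Omega)\cN$ with $z\in Z_G(F)$, apply \Cref{cor:non focusing} to each, and intersect the resulting full-measure sets of $s$.

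\textbf{Step 1: the cover.} By \Cref{prop:ZFM},
\[
N(F,M)x_0=\bigcup_{i=1}^{s_1}Z_G(F)M_1p_ig_1x_0,
\]
and $M_1p_ig_1x_0=p_iM_1g_1x_0$ because $p_i\in N_G(M_1)$. This is a finite union of the closed orbits $Y$ from \eqref{eq:Y}, one for each choice of $p=p_i$ and its associated $\cN=\exp(O)Y$. For fixed $p$ it therefore suffices to show $\nu(Z_G(F)Y)=0$.

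By \Cref{lem:ZGF-Pb-}, every $z\in Z_G(F)$ factors as
\[
z=z_1\exp(\omega')m,\qquad z_1\in Z_G(H_d),\quad \omega'\in\fp_1,\quad m\in Z_G(F)\cap U_b^-\cap M_1.
\]
Since $mY=Y$, we get $zY=z_1\exp(\omega')Y$. Write $Z_G(H_d)=\exp(\fs)\cdot(Z_G(H_d)\cap M_1)$ locally, using that $\fs$ is complementary to $\Lie(M_1)$ in $\Lie(Z_G(H_d))$. Then $Z_G(F)Y$ is covered by countably many translates $z_j\exp(\Omega)\exp(O)Y=z_j\exp(\Omega)\cN$ with $z_j\in Z_G(F)$. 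This uses $\sigma$-compactness of $Z_G(F)$ and the fact that $\exp(\fs)$ and $\exp(\fp_1)$ commute up to a smaller neighborhood. The order of factors needs care: we need $\exp(\fp_1)$ to be normalized by $Z_G(H_d)$, which holds because both $\fp$ and $Z_G(F)$ are $\Ad(Z_G(H_d))$-stable.

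\textbf{Step 2: apply the corollary to each translate.} The hypothesis gives $\rho_{\bN,\Omega}(z_jx)<\frac{d}{d+1}\theta'$ for every $j$. So \Cref{cor:non focusing} yields, for each $j$, a null set $E_j\subset I^d$ outside of which every subsequence of the averages has a limit point $\nu$ with $\nu(z_j\exp(\Omega)\cN)=0$.

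The issue is that the limit point may depend on $j$. To fix this, I would strengthen the conclusion by going back to \Cref{prop:continuous_exponential_recurrence&avoidence}. The Borel--Cantelli argument shows that, for $s\notin E_j$ and each $\epsilon_k=1/k$, there is a compact set $K_{j,k}$ disjoint from $z_j\exp(\Omega)\cN$ such that $\liminf_i\mathcal C^{T_i}_{K_{j,k}}(s)\ge 1-\epsilon_k$ for the translated averages. Consequently every weak-$\ast$ limit $\nu$, not merely some limit point, satisfies $\nu(z_j\exp(\Omega)\cN)\le\epsilon_k$ for all $k$, hence $\nu(z_j\exp(\Omega)\cN)=0$.

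\textbf{Step 3: combine.} Let $E=\bigcup_jE_j$, which is still null. For $s\notin E$, every limit point $\nu$ of every subsequence satisfies $\nu(z_j\exp(\Omega)\cN)=0$ for all $j$ and all $p_i$. Countable subadditivity then gives $\nu(N(F,M)x_0)=0$. Limit points exist by \Cref{thm:main_nondivergence}, or by the same recurrence estimate.

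\textbf{Main obstacle.} I expect the hardest part to be the covering in Step 1: verifying that the product decomposition from \Cref{lem:ZGF-Pb-} combined with the $\exp(O)$ factor really exhausts $Z_G(F)Y$ by countably many such translates. The factor $Z_G(F)\cap U_b^-\cap M_1$ must be absorbed into $Y$, and the ordering $\exp(\fs)$ versus $\exp(\fp_1)$ must be handled using normalization by $Z_G(H_d)$.
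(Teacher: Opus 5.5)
Your overall strategy is the paper's. You cover $Z_G(F)Y$ by countably many translates $z_j\exp(\Omega)\cN$ with $z_j\in Z_G(F)$, apply \Cref{cor:non focusing} to each, and use countable subadditivity over $j$ and over the $s_1$ choices of $p_i$ from \Cref{prop:ZFM}. Your Step~2 observation is correct, and it is exactly what makes the countable union legitimate. The compact sets $K$ from \Cref{prop:continuous_exponential_recurrence&avoidence} are fixed, and $\nu(K)\ge\limsup_i\mu_{T_i}(K)$ for compact $K$. So \emph{every} limit point, not just some, gives zero mass to each translate. The paper uses this tacitly when it writes ``for any of the limiting measures''. (A small bookkeeping point: if you apply the recurrence estimate at $zx$ and translate back by $z^{-1}$, the set you avoid is $z^{-1}\exp(\Omega)\cN$. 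Since the hypothesis holds for all $z\in Z_G(F)$, just apply it at $z_j^{-1}x$.)

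The step that does not work as written is your justification of the cover in Step~1. The claim that $\fp$ is $\Ad(Z_G(H_d))$-stable is false whenever $r>d+1$. Elements of $Z_G(H_d)$ are $\diag(\lambda I_{d+1},D)$ with $D\in\GL_{n-d}(\R)$ essentially arbitrary. Conjugation replaces the nonzero block of an element of $\fp$ (rows $r,\dots,n$, columns $0,\dots,d$) by $\lambda^{-1}D$ times that block, which mixes rows $r,\dots,n$ into rows $d+1,\dots,r-1$. So $\exp(\fp_1)$ is normalized only by $Z_G(H_d)\cap M_1$, and your global reordering of $\exp(\fs)$ past $\exp(\fp_1)$ is not available.

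Fortunately no group-level factorization $z=z_1\exp(\omega')m$ is needed. As in the paper, \Cref{lem:ZGF-Pb-} gives $\Lie(Z_G(F))=\fp_1\oplus\fs\oplus\Lie(Z_G(F)\cap M_1)$. By the inverse function theorem, $V=\exp(\Omega)\exp(O)(Z_G(F)\cap M_1)$ therefore contains an open neighborhood of $e$ in $Z_G(F)$. By second countability there are countably many $z_j\in Z_G(F)$ with $Z_G(F)=\bigcup_j z_jV$. Because $(Z_G(F)\cap M_1)Y=Y$, this yields $Z_G(F)Y=\bigcup_j z_j\exp(\Omega)\cN$. This replacement is really what your phrase about $\sigma$-compactness and ``commuting up to a smaller neighborhood'' amounts to; with it, and the normalization claim dropped, your argument is complete.
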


\begin{proof}
    We have that $Z_G(F)M_1=\cup_{z\in Z_G(F)} z\exp(\Omega)\exp(O)M_1$. Due to \Cref{lem:ZGF-Pb-}, 
    $\Lie(Z_G(F))=\fp_1\oplus \fs\oplus \Lie(Z_G(F)\cap M_1)$. Therefore, $z\exp(\Omega)\exp(O)M_1$ is an open subset of $Z_G(F)M_1$ for each $z\in Z_G(F)$. By second countability, we pick countably many $z_i\in Z_G(F)$ such that $Z_G(F)M_1=\cup_{i=1}^\infty z_i\exp(\Omega)\exp(O)M_1$. 
    
    We picked any $p\in N_G(M_1)$, and we set $y=pg_1$, $Y=M_1y$, and $\cN=\exp(O)Y$. So we have $Z_G(F)Y=\cup_{i=1}^\infty z_i\exp(\Omega)\cN$. Due to \Cref{cor:non focusing}, we deduce that for almost all $s\in I^d$, $\nu(Z_G(F)Y)=0$ for any of the limiting measures as in the proposition. We note that $Z_G(F)Y=Z_G(F)M_1pg_1\Gamma/\Gamma\subset X$. 

    Further, by \Cref{prop:ZFM} we have $N(F,M)=\cup_{i=1}^{s_1} Z_G(F)M_1p_ig_1$, where $p_i\in N_G(M_1)$. Therefore, the conclusion of the proposition follows from the above observations. 
\end{proof}

\subsection{Consequence of large exponent}

    Let $\bfG=\SL_{n+1}$, and let $\alpha_i$ be the root $\diag(t_j)\mapsto t_it_{i+1}^{-1}$ for $1\leq i\leq n$. Then $\Delta=\left\{ -\alpha_i \right\}_{1\leq i\leq n}$ form a set of simple roots. Let $\bfP_i$ be the maximal parabolic subgroup of $\bfG$ associated to $\Delta\setminus\{-\alpha_i\}$ for $1\leq i\leq n$, and let $P_i=\bfP_i(\bbR)$. In block matrix form,
	\begin{equation} \label{eq:Pi}
	P_i = \left\{
	\begin{pmatrix}
	A_{i\times i} & 0_{i\times(n+1-i)} \\
	C_{(n+1-i)\times i} & D_{(n+1-i)\times(n+1-i)}
	\end{pmatrix}
	\right\}.
    \end{equation}

    Recall that $\cA$ is the $d$-dimensional affine subspace of $U_a^+\cong \R^n$ parametrized by $A$.
    We suppose that $\omega(A)<n$. We fix any $\theta\in (0,\frac{d}{d+1})$. Then, by \cite[Proposition 4.1]{SY24TAMS}, we have $0\leq \rho_{\eps,\theta}(x_A)<\theta$. 
    We choose $\theta'$ as in \eqref{eq:theta and theta'} and  $\sigma>1$ as in \eqref{eq:choice of sigma}, and then $\rho_1$ such that 
    \begin{equation} \label{eq:choice of sigma2}
        \rho_{\eps,\theta}(A) <\rho_1<\frac{d\theta'}{d+1}<\sigma\frac{d\theta'}{d+1}<\theta <\frac{d}{d+1}.
    \end{equation}
    
    \begin{prop} \label{prop:consequence of large exponent}
         Suppose that $\rho_{\bN,\Omega}(zx_A)\geq \frac{d}{d+1}\theta'$ for some bounded open neighborhood $\Omega$ of $0$ in $\fp_1$ and for some $z\in Z_G(F)$. Then $\mathcal{A}$ satisfies possibility~(\ref{item:algebraic obstruction}) of \Cref{thm:main_equidistribution}. 
    \end{prop}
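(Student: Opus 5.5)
Since $\bN=\alpha_\eps^\theta+\beta^{\theta'}$, the plan is to split the growth of $\bN$ along $b_t\exp(\Omega)zx_A$ into the $\alpha$-part and the $\beta^{\theta'}$-part, rule out the $\alpha$-part, and turn growth of the $\beta^{\theta'}$-part into Liouville approximations.

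\emph{The $\alpha$-part is too slow.} By \Cref{lem:ZGF-Pb-}, $z\in Z_G(H_d)(U_b^-\cap Z_G(F))\subset P_b^-$, and $\exp(\Omega)\subset P_b^-$. Since $\rho_{\eps,\theta}(x_A)<\rho_1<\theta$ by \eqref{eq:choice of sigma2}, \Cref{prop:bt-Omega} applied to the compact set $\overline{\exp(\Omega)}z$ gives
\[
\limsup_{t\to\infty}\frac{1}{t}\sup_{\omega\in\Omega}\log\alpha_\eps^\theta(b_t\exp(\omega)zx_A)<\rho_1<\frac{d\theta'}{d+1}.
\]
Consequently, by \eqref{eq:r_x}, $r^{-\theta'}$ along these orbits grows at rate at most $\sigma\frac{d\theta'}{d+1}\theta^{-1}\rho_1<\frac{d\theta'}{d+1}$. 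Hence the hypothesis $\rho_{\bN,\Omega}(zx_A)\geq\frac{d\theta'}{d+1}$ must come from the first alternative in \eqref{eq:defbeta}. So there are $t_m\to\infty$, $\omega_m\in\Omega$ and $(v_m,w_m)\in\fq\times\fp$ with
\[
\exp(w_m)\exp(v_m)b_{t_m}\exp(\omega_m)zx_A\in\cN,\qquad \norm{v_m}\le e^{-\kappa' t_m},\quad \norm{w_m}\le e^{-\kappa' t_m/d},
\]
where $\kappa'$ is any number below $\frac{d}{d+1}$. Moreover, $\alpha_\eps^\theta$ at these points is at most $e^{\rho_1 t_m}$, i.e.\ the points lie in a region only subexponentially far into the cusp.

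\emph{Pull back by $b_{-t_m}$.} Conjugating by $b_{-t_m}$, elements of $\fq$ are expanded by $e^{\left(\frac{1}{n+1}+\frac{n-d}{(d+1)(n+1)}\right)t_m}=e^{\frac{t_m}{d+1}}$, while elements of $\fp$ are contracted by the same factor. Since $\cN=\exp(O)M_1y$ and $b_t\in M_1$, this yields
\[
\exp(w_m')\exp(v_m')\exp(\omega_m)zx_A\in\exp(O_m)M_1 pg_1x_0,
\]
with $\norm{w_m'}$ exponentially small and $\norm{v_m'}\le e^{-(\kappa'-\frac{1}{d+1})t_m}$, which is exponentially small because $\kappa'>\frac{1}{d+1}$ for $d\ge1$ (choose $\theta'$ close to $1$). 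Absorbing $z$, $\omega_m$ and the $\fs$-component into $Z_G(H_d)$-type elements, the element $u_A$ lies within $e^{-ct_m}$ of $g'M_1 p\,g_1\gamma_m$ for some $\gamma_m\in\Gamma$, with $g'$ in a fixed compact set. By Remark~\ref{rem:g1-F}, the first row-block of $p_ig_1\gamma_m$ spans an $r$-dimensional subspace of $\R^{n+1}$ defined over $\sigma_i(\bbF)$; call its projectivization $\cT_m$. Closeness of $u_A$ to $g'M_1pg_1\gamma_m$, combined with $H_d\subset M_1$ acting inside the first block, should give $d(\cL_A,\cT_m)\le e^{-ct_m}$, i.e.\ $\cA$ is close to the $(r-1)$-dimensional $\sigma_i(\bbF)$-subspace $\cT_m$. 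This uses that the first $d+1$ coordinates together with $\cA$'s projective closure correspond to the span of $u_A^{-1}$ applied to $e_0,\dots,e_d$.

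\emph{Height control --- the main obstacle.} We need $H_\cL(\cT_m)\le e^{o(t_m)}$, or at least $e^{Ct_m\rho_1}$ with $\rho_1$ arbitrarily small relative to the approximation exponent. I would bound the height by the norm of the primitive integral Plücker vector of the $\Q$-span $\Res(\cT_m)$, i.e.\ of $\gamma_m^{-1}$ applied to the rational structure. I would control that norm using the bound $\alpha_\eps^\theta\le e^{\rho_1 t_m}$ at the point $b_{t_m}\exp(\omega_m)zx_A$: a short vector in the lattice would make $\alpha$ large, so the relevant integral wedge has norm at most $e^{O(\rho_1 t_m)}$ after multiplication by $b_{t_m}$. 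Undoing $b_{t_m}$ multiplies this norm by at most $e^{Ct_m}$, which threatens to ruin the estimate. The hope is that the weight of $b_t$ on the $M_1$-invariant $\bbF$-structure is balanced: the relevant $\bbF$-line in $\bigwedge^r$ has $b$-weight zero in the first block because $H_d$ fixes it. Then $H(\cT_m)\le e^{C\rho_1 t_m}$ while $d(\cA,\cT_m)\le e^{-ct_m}$. Since $\theta'$ can be taken close to $1$ and $\rho_1$ can be made small relative to $c$ (the gap in \eqref{eq:choice of sigma2} may need re-choosing), passing $\kappa\to\infty$ requires iterating with smaller $\rho_1$. If $\rho_1$ cannot be made small, I would instead argue that $\cT_m$ stabilizes, in which case $\cA\subset\cT$ and $\cA$ is trivially Liouville. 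Finally, $r\ge d+1$ and $rm=n+1$ come from Claim~\ref{claim:3.2.1}.
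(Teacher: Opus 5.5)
Your first step matches the paper. \Cref{prop:bt-Omega}, applied to $\overline{\exp(\Omega)}z\subset P_b^-$, keeps $\alpha_\eps^\theta$ below $e^{\rho_1t_m}$. So $r_x^{-\theta'}$ is too small, and $\beta^{\theta'}$ must be realized by some $(v_m,w_m)$ with $\norm{v_m}<e^{-\xi_mt_m}$ and $\norm{w_m}<e^{-\xi_mt_m/d}$, where $\xi_m\uparrow\frac{d}{d+1}$.

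The pull-back step, however, has the directions reversed. $\fq$ sits in rows $0,\dots,d$ and columns $\geq r$, so $\Ad(b_t)$ acts on it by $e^{t/(d+1)}$. Thus $\Ad(b_{-t_m})$ contracts $\fq$ and expands $\fp$. Consequently $\norm{\Ad(b_{-t_m})v_m}\ll e^{-(1-\epsilon_m)t_m}$ with $\epsilon_m=\frac{d}{d+1}-\xi_m\to0$. This is better than your bound, which for $d=1$ would not even be small, since then $\kappa'<\frac12=\frac{1}{d+1}$. On the other hand, $\norm{\Ad(b_{-t_m})w_m}\ll e^{(\epsilon_m/d)t_m}$ is \emph{not} small, so your element $g'$ does not stay in a compact set. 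This term is harmless for the distance, because $\exp(\fp)$, $Z_G(F)$ and $\exp(O)$ lie in $P_{d+1}$, $M_1\subset P_r$, and $\overline{\cA}$ is parametrized by $P_{d+1}g_A$. But this term is exactly what governs the height.

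The height step is where the argument genuinely fails. Being $(\bbF,r-1)$-Liouville requires every $\kappa$, so against $d(\overline{\cA},\cT_m)\leq e^{-(1-o(1))t_m}$ you need $H_\cL(\cT_m)=e^{o(t_m)}$. A bound $H_\cL(\cT_m)\leq e^{C\rho_1t_m}$ yields only one fixed approximation exponent. Moreover, $\rho_1$ cannot be shrunk: it must exceed $\rho_{\eps,\theta}(x_A)$, which is determined by $A$ and may be positive. Your stabilization fallback only works when $\rho_{\bN,\Omega}(zx_A)>\frac{d}{d+1}\theta'$ strictly. That is \Cref{cor:inFSubspace}, where the $\fp$-term becomes exponentially small and the lattice elements repeat. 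It does not cover the borderline case allowed by the hypothesis.

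The missing idea is to control the lattice element directly rather than through $\alpha_\eps^\theta$ and a ``weight zero'' heuristic. Start from $b_{t_m}\omega_mg_A\gamma_m\in\exp(-v_m)\exp(-w_m)\exp(O)M_1pg_1$ and use $M_1pg_1=pg_1M$. This lets you write $\gamma_m=l_mm_m$ with $m_m\in M$ and $\norm{l_m}\ll e^{(\epsilon_m/d)t_m}$; the only unbounded factor in $l_m$ is $\exp(-\Ad(b_{-t_m})w_m)$. By reduction theory for $M\cap\Gamma$, choose $\lambda_m\in M\cap\Gamma$ with $m_m^{-1}\in\lambda_m\fS$ for a fixed Siegel domain $\fS\subset M$. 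Then \cite[Theorem~1.1]{DO22} gives $\norm{\gamma_m\lambda_m}\leq C_1\norm{l_m}^{A_1}=e^{o(t_m)}$. Replacing $\gamma_m$ by $\gamma_m\lambda_m$ preserves the membership relation. It also leaves unchanged the subspace $\cT_m$ parametrized by $P_rpg_1\gamma_m^{-1}$, since $pg_1\lambda_m^{-1}\in M_1pg_1$ and $M_1\subset P_r$. The Pl\"ucker height of $\cT_m$ is polynomial in $\norm{\gamma_m}$, so $H_\cL(\cT_m)=e^{o(t_m)}$, which gives the Liouville property for every $\kappa$.
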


    \begin{proof}  
       By \eqref{eq:definition of beta_N} and \eqref{eq:definition of beta_N exponent}, there exists $t_i\to\infty$, $\omega_i\in\exp(\Omega)z\subset Z_G(F)$, and $\xi_i\uparrow \frac{d}{d+1}$ such that 
       \[
       \beta_\cN(b_{t_i}\omega_i x_A) =\alpha_{\eps}^\theta(b_{t_i}\omega_ix_A)+\beta^{\theta'}(b_{t_i}\omega_ix_A)>e^{\xi_i\theta't_i}.
       \]
       Therefore, by \Cref{prop:bt-Omega}, 
       $\rho_{\eps,\theta,\exp(\Omega)z}(x_A)=\rho_{\eps,\theta}(x_A)<\rho_1$. 
       Therefore, after passing to a subsequence, since $\rho_1<\xi_i\theta'$,
       \begin{equation} \label{eq:alpha-bound}
       \alpha_\eps^\theta(b_{t_i}\omega_i x_A)\leq e^{\rho_1t_i}< e^{\xi_i\theta' t_i}.
       \end{equation}
       Hence, by \eqref{eq:choice of sigma2} we deduce that 
       \begin{equation} \label{eq:beta-bound}
       \beta^{\theta'}(b_{t_i}\omega_ix_A)>e^{\xi_i\theta' t_i}.
       \end{equation}
       Now, recalling \eqref{eq:r_x}, by \eqref{eq:alpha-bound} we have 
       \begin{equation}
       r_{b_{t_i}\omega_ix_A}^{-\theta'}
       =(\eps_0\alpha_\eps^{\sigma\frac{d}{d+1}}(b_{t_i}\omega_ix_A))^{\theta'}
       \leq  \alpha_\eps^\theta(b_{t_i}\omega_ix_A)^{\frac{\sigma d\theta'}{(d+1)\theta}}< e^{\frac{\sigma\rho_1}{\theta}\frac{d\theta'}{d+1}t_i}.  
       \label{eq:r-bound}
       \end{equation}
     By \eqref{eq:choice of sigma2}, we have $\sigma\rho_1<\sigma\frac{d\theta'}{d+1}<\theta$, so $\frac{\sigma\rho_1}{\theta}<1$. Therefore, by \eqref{eq:beta-bound} and \eqref{eq:r-bound}, since $\xi_i\uparrow \frac{d}{d+1}$, after passing to a subsequence, we get
       \[
       \beta^{\theta'}(b_{t_i}\omega_ix_A)>r_{b_{t_i}\omega_ix_A}^{-\theta'},\ \forall i.
       \]
       Therefore, by the definition \eqref{eq:defbeta} of $\beta^{\theta'}$, for every sufficiently large $i$, there exists $(v_i,w_i)\in\mathfrak{q}\times\mathfrak{p}$ such that
       \begin{equation} \label{eq:beta-vw}
            \exp(w_i)\exp (v_i)b_{t_i}\omega_ix_A\in\cN \text{ and } \beta^{\theta'}(b_{t_i}\omega_ix_A)\leq \min\bigl\{\norm{v_i}^{-\theta'},\norm{w_i}^{-d\theta'}\bigr\},
       \end{equation}

        Recall that by \eqref{eq:Y} and \eqref{eq:cN-def}, \[
        \cN=\exp(O)Y=\exp(O)pg_1Mx_0=\exp(O)M_1pg_1x_0
        \text{ and } x_A=g_Ax_0.
        \]
        Hence there exists $\gamma_i\in\Gamma=\Stab_G(x_0)$ such that
        \begin{equation} \label{eq:gammai-choice}
            b_{t_i}\omega_ig_A\gamma_i\in \exp(-v_i)\exp(-w_i)\exp(O)M_1pg_1.
        \end{equation}
       Hence, since $\{b_t\}$ commutes with elements of $Z_G(H_d)$ and $\{b_t\}\subset M_1$, we have
       \begin{equation} \label{eq:gAgammai}
           \omega_ig_A\gamma_i\in \exp(-\Ad(b_{-t_i})v_i)\exp(-\Ad(b_{-t_i})w_i)\exp(O)M_1pg_1. 
       \end{equation}
       By \eqref{eq:beta-bound} and \eqref{eq:beta-vw},
       \[
       \norm{v_i}\ll e^{-\xi_it_i} \text{ and }\norm{w_i}\ll e^{-\frac{\xi_i}{d}t_i}.
       \]
       And, since $v_i\in\fq$ and $w_i\in \fp$, in view of \eqref{eq:bt} and \eqref{eq:p}, we get   
       \begin{equation} \label{eq:b-tviwi}
       \begin{array}{ll}
       \norm{\Ad(b_{-t_i})v_i}&\ll e^{-\frac{1}{d+1}t_i}\norm{v_i}\ll e^{-(1-\epsilon_i)t_i} \\
       \norm{\Ad(b_{-t_i}) w_i}&\ll e^{\frac{1}{d+1}t_i}\norm{w_i}\ll e^{(\epsilon_i/d)t_i},
       \end{array}
       \end{equation}
       where $\epsilon_i=\frac{d}{d+1}-\xi_i\downarrow 0$. Let $Y_i=\Ad(g_A^{-1} \omega_i^{-1} b_{-t_i})(v_i)$ for each $i$, and it follows that
       \begin{equation} \label{eq:size of Y_i}
           \norm{Y_i}\ll e^{-(1-\epsilon_i)t_i}.
       \end{equation}
       We have $\omega_i\in Z_G(F)\subset P_{d+1}$ and $\exp(O)\subset Z_G(H_d)\subset P_{d+1}$. Moreover due to the initial $(r\times r)$-block form of $M_1$ and that $r\geq d+1$,  $P_{d+1}M_1=P_{d+1}P_{r}$. Therefore,
       \begin{equation} \label{eq:gA-in}
       g_A\exp(Y_i)\in\omega_i^{-1}\exp(-\Ad(b_{-t_i})w_i)\exp(O)M_1pg_1\gamma_i^{-1}\subset P_{d+1}P_{r}pg_1\gamma_i^{-1}, \quad \forall i.
       \end{equation}
      Moreover, we recall that $M_1pg_1=pg_1 M$ and $M=\Stab_G(v_0)$. So, for each $i$, we have
       \begin{equation}
           \label{eq:li}
           l_i\in g_A^{-1}\omega_i^{-1}\exp(-\Ad(b_{-t_i})v_i)\exp(-\Ad(b_{-t_i})w_i)\exp(O)pg_1\in G
       \end{equation}
       and $m_i\in M$ such that
       \begin{align}
           \label{eq:gammaili}
           \gamma_i=l_im_i \text{ and } \norm{l_i}\ll e^{(\epsilon_i/d)t_i}. 
       \end{align}
    In view of reduction theory for the lattice $M\cap\Gamma$ acting on the left on $M$, by passing to a subsequence, there exists a generalized Siegel domain $\fS$ in $M$ and $\lambda_i\in M\cap\Gamma$ such that $m_i^{-1}\in \lambda_i\fS$ for all $i$. Then, $\gamma_i\lambda_i\in l_i\fS^{-1}$. Then, by \cite[Theorem~1.1]{DO22} there exist $A_1,C_1>0$ such that 
    \begin{equation} \label{eq:gammai-size}
       \norm{\gamma_i\lambda_i}\leq C_1\norm{l_i}^{A_1}.
    \end{equation}
    To see how this follows from the quoted result, consider the action of $m\in M$ on the vector space $\Mat(n+1,\R)$ by right multiplication by $m^{-1}$. And note that $\Gamma\subset \Mat(n+1,\Z)$ and the action of $l_i$ on $\Mat(n+1,\R)$ by left multiplication commutes with the above right action of $M$. 

    Noting that 
    \[
    M_1pg_1\lambda_i=pg_1M\lambda_i=pg_1M=M_1pg_1,
    \]
    replacing $\gamma_i$ by $\gamma_i\lambda_i$ in \eqref{eq:gammai-choice}, without loss of generality, we may assume that
    \begin{equation} \label{eq:gammai-growth}
        \norm{\gamma_i}\ll e^{\epsilon'_it_i}\text{, where $\epsilon'_i\to 0$ as $i\to\infty$.}
    \end{equation}

    We recall the notation from \Cref{def:F-Liouville}. Let $\overline{\cA}$ denote the projective closure of $\cA$ in $\bbP^n$. Then it follows that $\overline{\cA}$ is parametrized by $P_{d+1}g_A\in P_{d+1}\backslash G \cong \Gr(d+1,n+1)$. Let $\cT_i$ denote the $(r-1)$-dimensional subspace of $\bbP^n$ parametrized by $P_rpg_1\gamma_i^{-1}$. Since right multiplication by $\gamma_i^{-1}$ does not change the field of definition, we have $\cT_i$ is defined over $\sigma(\bbF)$, where $\sigma:\bbF\hookrightarrow\R$ is a real embedding determined by $p$. Indeed, in view of the choice of $p=p_j$ for some $1\leq j\leq s_1$ as in \Cref{prop:ZFM} and by \Cref{rem:g1-F} we have that the first $r$-rows of $p_jg_1$ have entries in $\sigma_j(\bbF)$. By replacing $\bbF$ with $\sigma(\bbF)$, without loss of generality we may assume that $\cT_i$ is defined over $\bbF$. Moreover, there exist constants $C>0$ and $A>0$ such that 
    \begin{equation}\label{eq:T_i height}
        H_{\cL}(\cT_i)\leq C\norm{\gamma_i}^A\ll e^{A\epsilon'_it_i},
    \end{equation}
    which can be verified by considering the Pl\"ucker embedding $\Gr(r,n+1)\hookrightarrow\bbP(\wedge^{r}V)$ and noting that $\cL$ is the pull-back of $\cO(1)$. 
    
    By \eqref{eq:gA-in}, we have $P_1P_{d+1}g_A\exp(Y_i)\subset P_1P_rpg_1\gamma_i^{-1}$, or in other words, we have the inclusion
    $\overline{\cA}\exp(Y_i)\subset \cT_i$ in $\bbP^n\cong P_1\backslash G$. Therefore, by \eqref{eq:size of Y_i},
    \begin{equation}
        d(\overline{\cA},\cT_i)\leq d(\overline{\cA},\overline{\cA}\exp(Y_i))\ll \norm{Y_i}\ll e^{-(1-\epsilon_i)t_i}. 
    \end{equation}
    Now $\epsilon_i\to 0$ and $\epsilon'_i\to 0$ as $i\to\infty$. Therefore, by \eqref{eq:T_i height}, given any $\kappa>1$, 
    \begin{equation*}
        d(\overline{\cA},\cT_i)\leq H_\cL(\cT_i)^{-\kappa},\quad\forall i\gg 1.
    \end{equation*}
    Hence by \Cref{def:F-Liouville} we conclude that $\overline{\cA}$ is $(\bbF,r-1)$-Liouville, and thus $\cA$ is $(\bbF,r-1)$-Liouville.
 
    \end{proof}

Though not needed for our proof, we note the following. 

\begin{cor} \label{cor:inFSubspace}
    Suppose $\rho_{\bN,\Omega}(zx_A)> \frac{d}{d+1}\theta'$ in \Cref{prop:consequence of large exponent}. Then $\cA$ is contained in a $(r-1)$-dimensional affine subspace defined over $\bbF$.  
\end{cor}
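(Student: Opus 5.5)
We rerun the proof of \Cref{prop:consequence of large exponent}, now using the strict inequality to get an \emph{exact} incidence. The idea is that the distance bound will beat the height growth at a fixed polynomial rate, so Liouville-type approximations must stabilize. Since $\rho_{\bN,\Omega}(zx_A)>\frac{d}{d+1}\theta'$, we can fix $\xi$ with $\frac{d}{d+1}\theta'<\xi\theta'<\rho_{\bN,\Omega}(zx_A)$. We then choose $t_i\to\infty$ and $\omega_i\in\exp(\Omega)z$ with $\bN(b_{t_i}\omega_ix_A)>e^{\xi\theta' t_i}$. This replaces the sequence $\xi_i\uparrow\frac{d}{d+1}$ by a constant $\xi>\frac{d}{d+1}$.

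Exactly as before, \Cref{prop:bt-Omega} and \eqref{eq:choice of sigma2} give $\beta^{\theta'}(b_{t_i}\omega_ix_A)>\max\{e^{\xi\theta't_i},r^{-\theta'}_{b_{t_i}\omega_ix_A}\}$. This produces $(v_i,w_i)\in\fq\times\fp$ with $\norm{v_i}\ll e^{-\xi t_i}$ and $\norm{w_i}\ll e^{-\xi t_i/d}$. In \eqref{eq:b-tviwi} the exponents now read $\norm{\Ad(b_{-t_i})v_i}\ll e^{-(\frac{1}{d+1}+\xi)t_i}$, with $\frac1{d+1}+\xi>1$. We also get $\norm{\Ad(b_{-t_i})w_i}\ll e^{(\frac1{d+1}-\frac{\xi}{d})t_i}\to0$. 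So the $w$-part is now bounded, indeed decaying.

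Consequently $l_i$ in \eqref{eq:li} stays in a fixed compact set. By \cite[Theorem~1.1]{DO22}, after replacing $\gamma_i$ by $\gamma_i\lambda_i$ we get $\norm{\gamma_i}\ll 1$. The $\gamma_i$ then range over a finite subset of $\Gamma$, so passing to a subsequence we may take $\gamma_i=\gamma$ constant. Hence $\cT_i=\cT$ is a single $(r-1)$-dimensional subspace defined over $\bbF$, as in the proposition. The inclusion $\overline{\cA}\exp(Y_i)\subset\cT$ holds with $\norm{Y_i}\ll e^{-(\frac1{d+1}+\xi)t_i}\to0$. Since $\cT$ is closed in $\bbP^n$, letting $i\to\infty$ gives $\overline{\cA}\subset\cT$. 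Intersecting with the affine chart gives $\cA\subset\cT\cap\bbA^n$. This is an affine subspace of dimension $r-1$ defined over $\bbF$, because $\cT$ is not contained in the hyperplane at infinity since it contains $\overline{\cA}$.

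The main obstacle is the boundedness step. We must check that, with $\xi$ fixed above $\frac d{d+1}$, every factor in \eqref{eq:li} is uniformly bounded. The factor $\exp(O)$ is bounded by construction, and $\omega_i\in\exp(\Omega)z$ is bounded since $\Omega$ is bounded. The $w$-term is bounded by the computation above. Then the polynomial bound of \cite{DO22} yields finitely many $\gamma_i$; it is this finiteness, rather than a merely subexponential bound, that forces the approximating spaces to stabilize.
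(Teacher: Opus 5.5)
Your proposal is correct and follows the paper's proof. Fixing a constant $\xi$ with $\frac{d}{d+1}<\xi<\rho_{\bN,\Omega}(zx_A)/\theta'$ makes $\epsilon_i=\frac{d}{d+1}-\xi$ in \eqref{eq:b-tviwi} negative, so the $w$-term decays, the $l_i$ stay bounded, \cite[Theorem~1.1]{DO22} bounds the modified $\gamma_i$ so they are eventually constant, and $Y_i\to 0$ gives $\overline{\cA}\subset\cT$; you also spell out two steps the paper leaves implicit, namely the finiteness of the $\gamma_i$ and the passage from $\overline{\cA}\subset\cT$ to the affine statement.
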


\begin{proof}
 We can choose $\frac{\rho_{\bN,\Omega}(zx_A)}{\theta'}>\xi_i=\xi>\frac{d}{d+1}$ and we can obtain \eqref{eq:gAgammai} and \eqref{eq:b-tviwi} with $\eps_i=\frac{d}{d+1}-\xi<0$. Therefore, after passing to a subsequence we get $\gamma_i=\gamma_1$ for all $i$.  Therefore $\cT_i=\cT_1$ for all $i$. Since $Y_i\to 0$, we get $\overline{\cA}\subset \cT_1$.
 \end{proof}

\subsection{Proof of the main theorem}
    \begin{proof}[Proof of \Cref{thm:main_equidistribution}]
        Under the assumption $\omega(A)<n$, suppose possibility~(\ref{item:algebraic obstruction}) of \Cref{thm:main_equidistribution} does not occur, then by \Cref{prop:consequence of large exponent}, for every $z\in Z_G(F)$ and any relatively compact open $\Omega$, we have $\rho_{\bN,\Omega}(zx_A) < \frac{d}{d+1}\theta'$. Then due to \eqref{eq:NFM}, the \Cref{cor:general} completes the proof of \Cref{prop:goal}. So by the analysis in the beginning of \Cref{subsect: avoidance of singular sets} we can obtain \eqref{eq:equidistribution}, which is equivalent to (\ref{item:equidistribution}) of \Cref{thm:main_equidistribution}.
    \end{proof}

\subsection*{Acknowledgment} 
We thank Ronggang Shi for enlightening discussions, in particular for suggesting the reference \cite{PSS23}, which led us to use the Margulis function with respect to the cusp and singular sets simultaneously.

Part of this research was performed while an author (N.S.) was visiting the Simons Laufer Mathematical Sciences Institute (SLMath), which is supported by the National Science Foundation (Grant No. DMS-2424139). 

	\bibliographystyle{alpha}
	\bibliography{bibfile}
	
\end{document}